\newcommand{\TheTitle}{Communication-Efficient Algorithms for Decentralized and Stochastic Optimization}
\title{{\TheTitle}\thanks{
This work was funded by National Science Foundation grants 1637473 and 1637474, and Office of Naval Research grant N00014-16-1-2802}}
\author{
  Guanghui Lan\and Soomin Lee\and
  Yi Zhou\thanks{Department of Industrial and Systems Engineering, Georgia Institute of Technology, Atlanta, GA, 30332. (\email{george.lan@isye.gatech.edu, soomin.lee@isye.gatech.edu, yizhou@gatech.edu})}
}
\def\a{\alpha}
\def\b{\beta}
\def\t{\theta}
\def\o{\omega}
\def\Gc{\mathcal{G}}
\def\Vc{\mathcal{N}}  
\def\vb{\mathbf{s}}   
\def\Ec{\mathcal{E}}
\def\1b{\mathbf{1}}
\def\0b{\mathbf{0}}
\def\la{\langle}
\def\ra{\rangle}
\def\Lb{\mathbf{L}}
\def\zb{\mathbf{z}}
\def\xb{\mathbf{x}}
\def\yb{\mathbf{y}}
\def\ub{\mathbf{u}}
\DeclareMathOperator*{\argmin}{arg\,min}
\newcommand\myeqa{\stackrel{\mathclap{\normalfont\mbox{(a)}}}{\le}}
\newcommand\myeqb{\stackrel{\mathclap{\normalfont\mbox{(b)}}}{\le}}
\newcommand\myeqc{\stackrel{\mathclap{\normalfont\mbox{(c)}}}{\le}}
\newcommand\myeqd{\stackrel{\mathclap{\normalfont\mbox{(d)}}}{\le}}
\newcommand\myeqe{\stackrel{\mathclap{\normalfont\mbox{(e)}}}{\le}}
\newtheorem{assumption}{Assumption}
\newcommand{\tsum}{\textstyle{\sum}}
\newcommand{\beq}{\begin{equation}}
\newcommand{\eeq}{\end{equation}}
\newcommand{\nn}{\nonumber}
\newcommand{\bbe}{\mathbb{E}}
\newcommand{\bbr}{\mathbb{R}}
\def\SGD{{\rm CS}}
\def\eqnok#1{\eqref{#1}}
\def\argmin{{\rm argmin}}
\def\exp{{\rm exp}}
\def\vgap{\vspace*{.1in}}
\def\prob{\mathop{\rm Prob}}
\def\Prob{{\hbox{\rm Prob}}}
\begin{document}

\maketitle

\begin{abstract}
We present a new class of decentralized first-order methods for nonsmooth and stochastic optimization problems defined over multiagent networks.
Considering that communication is a major bottleneck in decentralized optimization,
our main goal in this paper is to develop algorithmic frameworks which can significantly reduce the number of inter-node communications. We first propose a decentralized primal-dual method which can find an $\epsilon$-solution
both in terms of functional optimality gap and feasibility residual in ${\cal O}(1/\epsilon)$ inter-node communication
rounds when the objective functions are convex  and the local primal subproblems are solved exactly.
Our major contribution is to present a new class of decentralized primal-dual type algorithms, namely the decentralized communication sliding (DCS) methods,
which can skip the inter-node communications while agents solve the primal subproblems iteratively through linearizations of their local objective functions.
By employing DCS, agents can still find an $\epsilon$-solution in ${\cal O}(1/\epsilon)$ (resp., ${\cal O}(1/\sqrt{\epsilon})$)
communication rounds for general convex functions (resp., strongly convex functions), while maintaining
the ${\cal O}(1/\epsilon^2)$ (resp., ${\cal O}(1/\epsilon)$) bound on the total number of intra-node subgradient evaluations.
We also present a stochastic counterpart for these algorithms, denoted by SDCS,
for solving stochastic optimization problems whose objective function cannot be evaluated exactly.
In comparison with existing results for decentralized nonsmooth and stochastic optimization, we can reduce the total number of inter-node communication rounds by orders of magnitude while still maintaining the optimal complexity bounds on intra-node stochastic subgradient evaluations. The bounds on the
(stochastic) subgradient evaluations are actually comparable to those required for centralized nonsmooth and stochastic optimization
under certain conditions on the target accuracy.

\vspace{.1in}

\noindent {\bf Keywords:}
decentralized optimization, decentralized machine learning, communication efficient, stochastic programming, nonsmooth functions,
primal-dual method, complexity

\vspace{.07in}

\noindent {\bf AMS 2000 subject classification:}  90C25, 90C06, 90C22, 49M37, 93A14, 90C15
\end{abstract}
\setcounter{equation}{0}
\section{Introduction}
Decentralized optimization problems defined over complex multiagent
networks are ubiquitous in
signal processing, machine learning, control, and other areas in science and engineering
(see e.g. \cite{rabbat,con01,ram_info,Durham-Bullo}).
In this paper, we consider the following decentralized optimization problem
which is cooperatively solved by the network of $m$ agents:
\begin{align}\label{eqn:orgprob}
\min_x &~f(x) := \tsum_{i=1}^m f_i(x)\\
\text{s.t. }&~ x \in X, \quad X := \cap_{i=1}^m X_i \nonumber,
\end{align}
where $f_i:X_i \to \mathbb{R}$
is a convex and possibly nonsmooth objective function of agent $i$
satisfying
\begin{align}\label{eqn:nonsmooth}
\tfrac{\mu}{2}\|x-y\|^2\le f_i(x)-f_i(y)-\langle f_i'(y), x-y\rangle \le M\|x-y\|, \ \ \forall x, y\in X_i,
\end{align}
for some $M,\mu\ge 0$ and $f_i'(y) \in \partial f_i(y)$,
where $\partial f_i(y)$ denotes the subdifferential of $f_i$ at $y$,
and $X_i \subseteq \mathbb{R}^d$ is a closed convex constraint set of agent $i$.
Note that $f_i$ and $X_i$ are private and only known to agent $i$.
Throughout the paper, we assume the feasible set $X$ is nonempty.

In this paper, we also consider the situation where one can only have access to
noisy first-order information (function values and subgradients) of the functions $f_i$, $i = 1, \ldots, m$ (see \cite{NJLS09-1,Lan10-3}).
This happens, for example, when the function $f_i$'s are given in the form of expectation, i.e.,
\begin{align} \label{exp_prob}
f_i(x) := \mathbb{E}_{\xi_i}[F_i(x;\xi_i)],
\end{align}
where the random variable $\xi_i$ models a source of uncertainty and the distribution $\mathbb{P}(\xi_i)$
is not known in advance. As a special case of \eqnok{exp_prob}, $f_i$ may be given as the summation of many components, i.e.,
\begin{align} \label{emp_prob}
f_i(x) := \tsum_{j=1}^l f_i^j(x),
\end{align}
where $l \ge 1$ is some large number.
Stochastic optimization problem of this type has great potential of applications in data analysis, especially in machine learning.
In particular, problem \eqnok{exp_prob} corresponds to the minimization of generalized
risk and is particularly useful for dealing with online (streaming) data distributed over a network, while problem \eqnok{emp_prob} aims
at the collaborative minimization of empirical risk. Currently the dominant approach is to
collect all agents' private data on a server (or cluster) and to apply centralized
machine learning techniques. However, this centralization scheme would require agents to submit their private data to
the service provider without much control on how the data will be used, in addition to incurring high setup cost related
to the transmission of data to the service provider. Decentralized optimization
provides a viable approach to deal with these data privacy related issues.

In these decentralized and stochastic optimization problems, each network agent $i$ is associated with the local
objective function $f_i(x)$ and all agents
intend to cooperatively minimize the system objective $f(x)$ as the
sum of all local objective $f_i$'s
in the absence of full knowledge about the global problem and network structure.
A necessary feature in decentralized optimization is, therefore, that the agents
must communicate with their neighboring agents
to propagate the distributed information to every location in the network.

One of the most well-studied techniques in decentralized optimization are the
subgradient based methods (see e.g., \cite{Nedic2009,Nedic11,tsianos-pushsum,ANAO,Duchi-DDA,Lobel2011,WYin-Extra}),
where at each step a local subgradient is taken at each node, followed by the communication
with neighboring agents.
Although the subgradient computation at each step can be
inexpensive, these methods usually require lots of iterations until convergence.
Considering that one iteration in decentralized optimization is equivalent to one communication round among agents,
this can incur a significant latency.
CPUs in these days can read and write the memory at over $10$ GB per
second whereas communication over TCP/IP is about $10$ MB
per second. Therefore, the gap between intra-node computation and inter-node communication is
about 3 orders of magnitude. The communication start-up cost itself
is also not negligible as it usually takes a few milliseconds.

Another well-known type of decentralized algorithm relies on dual methods (see e.g., \cite{Boyd-ADMM,Wei-admm,perturbedPD}),
where at each step for a fixed dual variable, the primal variables
are solved to minimize some local Lagrangian related function,
then the dual variables associated with the consistency constraints
are updated accordingly.
Although these dual type methods usually require fewer numbers of iterations
(hence, fewer communication rounds) than the subgradient methods until convergence,
one crucial problem of these methods is that
the local subproblem associated with each agent cannot be solved efficiently in many cases.

The main goal of this paper is, therefore, to
develop dual based decentralized algorithms for solving \eqref{eqn:orgprob}
that is communication efficient and has local subproblems
easily solved by each agent through the utilization of (noisy) first-order information of $f_i$.
More specifically, we will provide a theoretical understanding
on how many numbers of inter-node communications and intra-node (stochastic) subgradient evaluations of $f_i$
are required in order to find a certain approximate solution of \eqref{eqn:orgprob}.


\subsection{Notation and Terminologies}
Let $\mathbb{R}$ denote the set of real numbers.
All vectors are viewed as column vectors, and
for a vector $x \in \mathbb{R}^d$, we use $x^{\top}$ to denote
its transpose.
For a stacked vector of $x_i$'s, we often use $(x_1,\ldots,x_m)$ to represent the column vector $[x_1^{\top},\ldots,x_m^{\top}]^{\top}$.
We denote by $\0b$ and $\1b$ the vector of all zeros and ones whose dimensions vary from the context.
The cardinality of a set $S$ is denoted by $|S|$.
We use $I_d$ to denote the identity matrix in $\mathbb{R}^{d\times d}$.
We use $A \otimes B$ for matrices $A \in \mathbb{R}^{n_1\times n_2}$ and $B \in \mathbb{R}^{m_1\times m_2}$ to denote their Kronecker product of size $\mathbb{R}^{n_1m_1\times n_2m_2}$.
For a matrix $A \in \mathbb{R}^{n\times m}$, we use $A_{ij}$ to denote the entry of $i$-th row and $j$-th column.
For any $m \ge 1$, the set of integers $\{1,\ldots,m\}$ is denoted by $[m]$.

\subsection{Problem Formulation}
Consider a multiagent network system whose communication is governed by an undirected graph $\Gc = (\Vc,\Ec)$,
where $\Vc = [m]$ indexes the set of agents,
and $\Ec \subseteq \Vc \times \Vc$ represents the pairs of communicating agents.
If there exists an edge
from agent $i$ to $j$ which we denote by $(i,j)$, agent $i$ may send
its information to agent $j$ and vice versa. Thus, each agent $i \in \Vc$ can directly
receive (resp., send) information only from (resp., to) the agents in its neighborhood
\begin{align}
N_i = \{j \in \Vc \mid (i,j)\in \Ec\} \cup \{i\},
\end{align}
where we assume that there always exists a self-loop $(i,i)$ for
all agents $i \in \Vc$.
Then, the associated Laplacian $L \in \mathbb{R}^{m\times m}$ of $\Gc$ is $L := D-A$
where $D$ is the diagonal degree matrix, 
and $A \in \mathbb{R}^{m\times m}$ is the adjacency matrix with the property that $A_{ij} = 1$
if and only if $(i,j)\in \Ec$ and $i \neq j$, i.e.,
\begin{align}
L_{ij} = \left\{
\begin{array}{ll}
|N_i|-1& \textrm{ if } i = j\\
-1 & \textrm{ if } i\neq j \textrm{ and } (i,j) \in \Ec\\
0 & \textrm{ otherwise.}
\end{array}
\right.
\end{align}

We consider a reformulation of problem \eqref{eqn:orgprob}
which will be used in the development of our decentralized algorithms.
We introduce an individual copy $x_i$ of the decision variable $x$
for each agent $i \in \Vc$
and impose the constraint $x_i = x_j$ for all pairs $(i,j) \in \Ec$.
The transformed problem can be written compactly by using the Laplacian matrix $L$:
\begin{align}\label{eqn:prob}
\min_{\xb} &~F(\xb) := \tsum_{i=1}^m f_i(x_i)\\
\text{s.t. } &~\Lb\xb = \0b, \quad x_i \in  X_i, \text{ for all } i = 1,\ldots,m,\nonumber
\end{align}
where $\xb = (x_1, \ldots, x_m) \in X_1 \times \ldots \times X_m$, $F:X_1 \times \ldots \times X_m \to \mathbb{R}$, and $\Lb = L\otimes I_d \in \mathbb{R}^{md\times md}$.
The constraint $\Lb\xb = \0b$ is a compact way of writing $x_i = x_j$ for all agents $i$ and $j$ which are connected by an edge. By construction, $\Lb$ is symmetric positive semidefinite and its
null space coincides with the ``agreement'' subspace, i.e.,
$\Lb\1b =
\0b$ and $\1b^{\top}\Lb=\0b$.
To ensure each node gets information from every other node, we need the following assumption.
\begin{assumption}\label{assume:G}
The graph $\Gc$ is connected.
\end{assumption}
Under Assumption \ref{assume:G}, problem \eqref{eqn:orgprob} and \eqref{eqn:prob} are equivalent.
We let Assumption \ref{assume:G} be a blanket assumption for the rest of the paper.

We next consider a reformulation of the problem \eqref{eqn:prob} as a saddle point problem.
By the method of Lagrange multipliers,
problem \eqref{eqn:prob} is equivalent to the following saddle point problem:
\begin{align}\label{eqn:saddle}
\min_{\xb \in X^m}   \left[F(\xb) + \max_{\yb\in \mathbb{R}^{md}} \la \Lb\xb, \yb\ra\right],
\end{align}
where $X^m := X_1 \times \ldots \times X_m$ and $\yb = (y_1, \ldots, y_m) \in \mathbb{R}^{md}$ are the Lagrange multipliers
associated with the constraints $\Lb\xb = \0b$.
We assume that there exists an optimal solution $\xb^* \in X^m$
of \eqref{eqn:prob}
and that there exists $\yb^* \in \mathbb{R}^{md}$ such that $(\xb^*,\yb^*)$
is a saddle point of \eqref{eqn:saddle}.

\subsection{Literature review}
Decentralized optimization has been extensively studied in recent years
due to the emergence of large-scale networks.
The seminal work on distributed optimization \cite{Tsiphd,Tsi1986}
has been followed by distributed incremental (sub)gradient methods and proximal methods
\cite{AN2001,Ram:2009,Bertsekas-IP,Wang-Bertsekas},
and more recently the incremental aggregated gradient methods and its proximal variants \cite{Parrilo-aggregated,Bertsekas-aggregated,GLYZ}.
All of these incremental methods are not fully decentralized in a sense that
they require a special star network topology in which the existence of a central authority
is necessary for operation.

To consider a more general network topology, a decentralized
subgradient algorithm was first proposed in \cite{Nedic2009},
and further studied in many other literature
(see e.g. \cite{Duchi-DDA,Martinez-PD,Nedic11,ANAO,tsianos2012consensus}).
These algorithms are intuitive and simple but very slow
due to the fact that they need to use diminishing stepsize rules.
All of these methods require ${\cal O}(1/\epsilon^2)$ inter-node communications and intra-node gradient computations
in order to obtain an $\epsilon$-optimal solution. First-order algorithms by Shi et. al.~\cite{WYin-Extra,WYin-PGExtra}
use constant stepsize rules with backtracking
and require ${\cal O}(1/\epsilon)$ communications 
when the objective function in \eqref{eqn:orgprob} is a relatively simple convex function,
but require both smoothness and strong convexity in order to achieve a linear convergence rate.
Recently, it has been shown in \cite{NaLi-Harness,Wilbur-TV}
that the linear rate of convergence can be obtained for minimizing ``unconstrained'' smooth and strongly convex problems.
These methods do not apply to general nonsmooth and stochastic optimization
problems to be studied in this work.

Another well-known type of decentralized algorithm is based on dual methods
including the distributed dual decomposition \cite{dual-decompos}
and decentralized alternating direction method of multipliers (ADMM) \cite{Wilbur-ADMM,ADMM-linear,Wei-admm}.
The decentralized ADMM \cite{Wilbur-ADMM,ADMM-linear} has been shown to require ${\cal O}(\log 1/\epsilon)$  communications in order to obtain an $\epsilon$-optimal solution under the no constraint, strong convexity and smoothness assumptions while \cite{Wei-admm} has been shown to require ${\cal O}(1/\epsilon)$ communications for relatively simple convex functions $f_i$ (see also \cite{HeJudNem15-1} for the application of mirror-prox method for solving these problems).
These dual-based methods have been further studied via proximal-gradient \cite{InexactConsensus,Chang-Stochastic}.
However, the local Lagrangian minimization problem associated with each agent
cannot be solved efficiently in many cases, especially when the problem is constrained.
Second-order approximation methods \cite{Ribeiro-DQM,Ribeiro-second} have been studied in order to handle this issue,
but due to the nature of these methods differentiability of the objective function is necessary in this case.

There exist some distributed methods that just assume smoothness on the objective functions,
but actually require more communication rounds than gradient computations.
For example, the distributed Nesterov's accelerated gradient method \cite{Jakovetic-Fast} employs multi-consensus
in the inner-loop. Although their method requires ${\cal O}(1/\sqrt{\epsilon})$ intra-node gradient computations,  inter-node communications must increase at a rate of ${\cal O}(\log(k))$ as the iteration $k$ increases.
Similarly, the proximal gradient method with adapt-then-combine (ATC) multi-consensus
strategy and Nesterov's acceleration under the assumption of bounded and Lipschitz gradients \cite{Chen-fastProx} is shown to have ${\cal O}(1/\sqrt{\epsilon})$ intra-node gradient computations, but inter-node communications must increase at a rate of ${\cal O}(k)$.
Due to the nature of decentralized networked systems,
the time required for inter-node communications is higher by a few orders of magnitude than that for intra-node computations.
Multi-consensus schemes in nested loop algorithms
do not account for this feature of networked systems and
hence are less desirable.

Decentralized stochastic optimization methods can be useful when the noisy gradient information
of the function $f_i$, $i = 1,\ldots,m$, in \eqref{eqn:orgprob} is only available or easier to compute.
Stochastic first-order methods for problem \eqref{eqn:orgprob} are studied in
\cite{Duchi-DDA,Ram2010,Nedic11}, all of which require ${\cal O}(1/\epsilon^2)$ inter-node communications
and intra-node gradient computations to obtain an $\epsilon$-optimal solution.
Multiagent mirror descent method for decentralized stochastic optimization \cite{Rabbat-SMD} showed a $\mathcal{O}(1/\epsilon)$ complexity bound when the objective functions are strongly convex.
An alternative form of mirror descent in the multiagent setting was proposed by \cite{Khan-MD} with an asymptotic convergence result.
On a broader scale, decentralized stochastic optimization was also considered in the case of time-varying objective functions in the recent work \cite{DistStoch,Rabbat-online}.
All these previous works in decentralized stochastic optimization suffered from
high communication costs due to the coupled scheme for stochastic subgradient evaluation and communication,
i.e., each evaluation of stochastic subgradient will incur one round of communication.

\subsection{Contribution of the paper}

The main interest of this paper is to develop communication efficient decentralized algorithms
for solving problem \eqref{eqn:prob}
in which $f_i$'s are convex or strongly convex, but not necessarily smooth,
and the local subproblem associated with each agent is nontrivial to solve.
Our contributions in this paper are listed below.

Firstly, we propose a decentralized primal-dual framework which involves only two inter-node communications per iteration.
The proposed method can find an $\epsilon$-optimal solution
both in terms of the primal optimality gap and feasibility residual in $\mathcal{O}(1/\epsilon)$ communication rounds when the objective functions are convex, and the local proximal projection subproblems can be solved exactly.
This algorithm serves as a benchmark in terms of the communication cost for our subsequent development.

Secondly, we  introduce a new decentralized primal-dual type method, called decentralized communication sliding (DCS), where the agents can skip communications while solving their local subproblems iteratively through successive linearizations of their local objective functions.
We show that agents can still find an $\epsilon$-optimal solution in $\mathcal{O}(1/\epsilon)$ (resp., $\mathcal{O}(1/\sqrt{\epsilon})$)
communication rounds while
maintaining the $\mathcal{O}(1/\epsilon^2)$ (resp., $\mathcal{O}(1/\epsilon)$)
bound on the total number of intra-node subgradient evaluations when the objective functions are general convex (resp., strongly convex).
The bounds on the subgradient evaluations are actually comparable to those
optimal complexity bounds required for centralized nonsmooth optimization
under certain conditions on the target accuracy, and hence are not improvable in general.

Thirdly, we present a stochastic decentralized communication sliding method, denoted by SDCS,
for solving stochastic optimization problems and
show complexity bounds similar to those of DCS on the total number of required communication
rounds and stochastic subgradient evaluations. In particular,
only $\mathcal{O}(1/\epsilon)$ (resp., $\mathcal{O}(1/\sqrt{\epsilon})$)
communication rounds are required while agents perform up to
$\mathcal{O}(1/\epsilon^2)$ (resp., $\mathcal{O}(1/\epsilon)$) stochastic
subgradient evaluations for general convex (resp., strongly convex) functions.
Only requiring the access to stochastic
subgradient at each iteration, SDCS is particularly efficient for solving problems with $f_i$ given in the form of \eqnok{exp_prob} and \eqnok{emp_prob}.
In the former case,  SDCS requires only one realization of the random variable at each iteration and provides a
communication-efficient way to deal with streaming data and decentralized machine learning.
In the latter case, each iteration of SDCS requires only one randomly selected component, leading up to a factor
of ${\cal O}(l)$ savings on the total number of subgradient computations over DCS.

To the best of our knowledge, this is the first time that these communication sliding algorithms, and the aforementioned separate complexity bounds
 on communication rounds and (stochastic) subgradient evaluations
are presented in the literature.

\subsection{Organization of the paper}
This paper is organized as follows.
In Section \ref{sec:prelim}, we provide some preliminaries on distance generating functions and prox-functions,
as well as the definition of gap functions, which will be used as termination criteria of our primal-dual methods.
In Section \ref{sec:exact}, we present a new decentralized primal-dual method for solving problem \eqref{eqn:saddle}.
In Section \ref{sec:deterministic}, we present the communication sliding algorithms when the exact subgradients of $f_i$'s
are available and establish their convergence properties for the general and strongly convex case.
In Section \ref{sec:stochastic}, we generalize the algorithms in Section \ref{sec:deterministic} for stochastic problems.
The proofs of the lemmas in Section \ref{sec:exact}-\ref{sec:stochastic} are provided in Section \ref{sec:conv}.
Finally, we provide some concluding remarks in Section \ref{sec:con}.

\setcounter{equation}{0}
\section{Preliminaries}
\label{sec:prelim}
In this section, we provide a brief review on the prox-function,
and define appropriate gap functions which will be used for
the convergence analysis and termination criteria of our primal-dual algorithms.

\subsection{Distance Generating Function and Prox-function}
In this subsection, we define the concept of prox-function,
which is also known as proximity control function or Bregman distance function \cite{BREGMAN1967}.
Prox-function has played an important role
in the recent development of first-order methods for convex programming
as a substantial generalization of the Euclidean projection.
Unlike the standard projection operator $\Pi_U[x] := \argmin_{u\in U}\|x-u\|^2$,
which is inevitably tied to the Euclidean geometry,
prox-function can be flexibly tailored to the geometry of a constraint set $U$.

For any convex set $U$ equipped with an arbitrary norm $\|\cdot\|_U$,
we say that a function $\o: U \to \mathbb{R}$ is a \textit{distance generating function} with modulus $\nu>0$
with respect to $\|\cdot\|_U$, if $\o$ is continuously differentiable and strongly convex with
modulus $\nu$ with respect to $\|\cdot\|_U$, i.e.,
\begin{align}
\la \nabla \o(x) - \nabla \o(u), x-u\ra \ge \nu\|x-u\|_U^2, \quad \forall x, u \in U.
\end{align}

The \textit{prox-function}, or \textit{Bregman distance function}, induced by $\o$ is given by
\begin{align}\label{eqn:def_V}
V(x,u) \equiv V_{\o}(x,u) := \o(u) - [\o(x) + \la \nabla \o(x), u-x\ra].
\end{align}
It then follows from the strong convexity of $\o$ that
\[
V(x,u) \ge \tfrac{\nu}{2}\|x-u\|_U^2, \quad \forall x, u \in U.
\]

We now assume that the individual constraint set $X_i$
for each agent in problem~\eqnok{eqn:orgprob} are equipped with norm $\|\cdot\|_{X_i}$, and
their associated prox-functions are given by $V_i(\cdot,\cdot)$. Moreover, we assume that each $V_i(\cdot,\cdot)$ shares the same strongly convex modulus $\nu=1$, i.e.,
\begin{align}\label{eqn:def_V_i_s}
V_i(x_i,u_i) \ge \tfrac{1}{2}\|x_i-u_i\|_{X_i}^2, \quad \forall x_i, u_i \in X_i, \ i=1,\ldots,m.
\end{align}
We define the norm associated with the primal feasible set $X^m=X_1 \times \ldots \times X_m$ of \eqref{eqn:saddle} as follows:\footnote{
We can define the norm associated with $X^m$ in a more general way,
e.g.,
	$
	\|\xb\|^2:=\tsum_{i=1}^m p_i\|x_i\|_{X_i}^2, \ \forall \xb=(x_1,\ldots,x_m) \in X^m,
	$
for some $p_i > 0$, $i = 1, \ldots,m$. Accordingly,
the prox-function $\mathbf{V}(\cdot,\cdot)$ can be defined  as
	$
	\mathbf{V}(\xb,\ub):=\tsum_{i=1}^m p_iV_i(x_i,u_i), \ \forall \xb,\ub\in X^m.
	$
	This setting gives us flexibility to choose $p_i$'s based on the information of individual $X_i$'s,
	and the possibility to further refine the convergence results. 
}
\beq\label{def_Xm_norm}
\|\xb\|^2\equiv \|\xb\|_{X^m}^2:=\tsum_{i=1}^m \|x_i\|_{X_i}^2,
\eeq
where $\xb=(x_1,\ldots,x_m)\in X^m$ for any $x_i\in X_i$. Therefore, the corresponding prox-function $\mathbf{V}(\cdot,\cdot)$ can be defined as
\beq\label{eqn:def_Vb}
\mathbf{V}(\xb,\ub):=\tsum_{i=1}^m V_i(x_i,u_i), \ \forall \xb,\ub\in X^m.
\eeq
Note that by \eqref{eqn:def_V_i_s} and \eqref{def_Xm_norm}, it can be easily seen that
\beq\label{eqn:def_V_s}
\mathbf{V}(\xb,\ub)\ge \tfrac{1}{2}\|\xb-\ub\|^2, \ \forall \xb, \ub \in X^m.
\eeq

Throughout the paper, we endow the dual space where the multipliers $\yb$ of \eqnok{eqn:saddle} reside
with the standard Euclidean norm $\|\cdot\|_2$, since the feasible region of $\yb$ is unbounded.
For simplicity, we often write $\|\yb\|$ instead of $\|\yb\|_2$ for a dual multiplier $\yb \in  \mathbb{R}^{md}$.

\subsection{Gap Functions: Termination Criteria}
Given a pair of feasible solutions $\zb = (\xb,\yb)$  and $\bar{\zb} = (\bar{\xb},\bar{\yb})$ of \eqref{eqn:saddle},
we define the \textit{primal-dual gap function} $Q(\zb;\bar{\zb})$ by
\begin{align}\label{eqn:gap}
Q(\zb;\bar{\zb}) := &~F(\xb) + \la \Lb\xb, \bar{\yb} \ra - [F(\bar{\xb}) + \la \Lb\bar{\xb}, \yb \ra].
\end{align}
Sometimes we also use the notations $Q(\zb;\bar{\zb}) := Q(\xb, \yb;\bar{\xb},\bar{\yb})$
or $Q(\zb;\bar{\zb}) := Q(\xb,\yb;\bar{\zb}) = Q(\zb;\bar{\xb},\bar{\yb})$.
One can easily see that $Q(\zb^*;\zb) \le 0$ and $Q(\zb;\zb^*) \ge 0$ for all $\zb \in X^m \times \mathbb{R}^{md}$, where $\zb^* = (\xb^*,\yb^*)$ is a saddle point of \eqref{eqn:saddle}.
For compact sets $X^m \subset \mathbb{R}^{md}$, $Y \subset \mathbb{R}^{md}$, the gap function
\begin{align} \label{eqn:gapQ0}
\sup_{\bar{\zb}\in X^m \times Y} Q(\zb;\bar{\zb})
\end{align}
measures the accuracy of the approximate solution $\zb$ to the saddle point problem \eqref{eqn:saddle}.

However, the saddle point formulation \eqref{eqn:saddle} of our problem of interest \eqref{eqn:orgprob}
may have an unbounded feasible set.
We adopt the perturbation-based termination criterion by Monteiro and Svaiter \cite{Monteiro01,Monteiro02,Monteiro03}
and propose a modified version of the gap function in \eqref{eqn:gapQ0}.
More specifically, we define
\begin{align}\label{eqn:gapp}
g_Y(\vb,\zb) := \sup_{\bar{\yb}\in Y} Q(\zb;\xb^*,\bar{\yb}) - \la \vb, \bar{\yb}\ra,
\end{align}
for any closed set $Y \subseteq \mathbb{R}^{md}$, $\zb \in X^m\times \mathbb{R}^{md}$ and $\vb \in \bbr^{md}$.
If $Y = \mathbb{R}^{md}$, we omit the subscript $Y$ and simply use the notation $g(\vb,\zb)$.

This perturbed gap function allows us to bound the objective function value and the feasibility separately.
We first define the following terminology.
\begin{definition}
A point $\xb \in X^m$ is called an $(\epsilon,\delta)$-solution of \eqref{eqn:prob} if
\begin{align}
F(\xb) - F(\xb^*) \le \epsilon \text{ and } \|\Lb\xb\|\le \delta.
\end{align}
We say that $\xb$ has primal residual $\epsilon$ and feasibility residual $\delta$.
\end{definition}
Similarly, a stochastic $(\epsilon,\delta)$-solution of \eqref{eqn:prob} can be defined as a point $\hat \xb \in X^m$ s.t. $\bbe[F(\hat \xb)-F(\xb^*)]\le \epsilon$ and $\bbe[\|\Lb\hat \xb\|]\le \delta$ for some $\epsilon,\delta>0$.
Note that for problem \eqref{eqn:prob}, the feasibility residual measures the disagreement among the local copies $x_i$, for $i \in \Vc$.

In the following proposition, we adopt a result from \cite[Proposition 2.1]{GL-AADMM} to describe the relationship between the perturbed gap function \eqref{eqn:gapp} and the approximate solutions to problem \eqref{eqn:prob}.
Although the proposition was originally developed for deterministic cases,
the extension of this to stochastic cases is straightforward.
\begin{proposition}\label{prop:approx}
For any $Y \subset \mathbb{R}^{md}$ such that $\0b \in Y$,
if $g_Y(\Lb\xb,\zb) \le \epsilon < \infty$ and $\|\Lb\xb\| \le \delta$, where $\zb = (\xb,\yb) \in X^m \times \mathbb{R}^{md}$, then $\xb$ is an $(\epsilon,\delta)$-solution of \eqref{eqn:prob}.
In particular, when $Y = \mathbb{R}^{md}$, for any $\vb$ such that $g(\vb,\zb) \le \epsilon < \infty$
and $\|\vb\|\le \delta$, we always have $\vb = \Lb\xb$.
\end{proposition}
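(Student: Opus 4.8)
The plan is to unwind the definitions of the perturbed gap function $g_Y(\vb,\zb)$ and the primal-dual gap $Q(\zb;\bar\zb)$ and read off the two inequalities $F(\xb)-F(\xb^*)\le\epsilon$ and $\|\Lb\xb\|\le\delta$ that constitute an $(\epsilon,\delta)$-solution. The second inequality is assumed outright, so the real content is the bound on the primal residual. First I would write out, for $\zb=(\xb,\yb)$ and the choice $\vb=\Lb\xb$,
\[
g_Y(\Lb\xb,\zb)=\sup_{\bar\yb\in Y}\big[Q(\zb;\xb^*,\bar\yb)-\la\Lb\xb,\bar\yb\ra\big]
=\sup_{\bar\yb\in Y}\big[F(\xb)+\la\Lb\xb,\bar\yb\ra-F(\xb^*)-\la\Lb\xb^*,\yb\ra-\la\Lb\xb,\bar\yb\ra\big],
\]
where I have substituted $\bar\xb=\xb^*$ into \eqref{eqn:gap}. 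The terms $\la\Lb\xb,\bar\yb\ra$ cancel, and since $\xb^*$ is feasible for \eqref{eqn:prob} we have $\Lb\xb^*=\0b$, so $\la\Lb\xb^*,\yb\ra=0$. Hence the supremand is the constant $F(\xb)-F(\xb^*)$, independent of $\bar\yb$, and because $\0b\in Y$ the supremum over $Y$ equals this constant. Therefore $g_Y(\Lb\xb,\zb)=F(\xb)-F(\xb^*)$, and the hypothesis $g_Y(\Lb\xb,\zb)\le\epsilon$ immediately gives $F(\xb)-F(\xb^*)\le\epsilon$. Combined with the assumed $\|\Lb\xb\|\le\delta$, this shows $\xb$ is an $(\epsilon,\delta)$-solution of \eqref{eqn:prob}.

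For the second assertion, with $Y=\mathbb{R}^{md}$, I would argue by contradiction on $\vb\ne\Lb\xb$. Expanding $g(\vb,\zb)=\sup_{\bar\yb\in\mathbb{R}^{md}}\big[F(\xb)-F(\xb^*)-\la\Lb\xb^*,\yb\ra+\la\Lb\xb-\vb,\bar\yb\ra\big]$ and again using $\Lb\xb^*=\0b$, the only $\bar\yb$-dependent term is the linear form $\la\Lb\xb-\vb,\bar\yb\ra$. If $\Lb\xb-\vb\ne\0b$, choosing $\bar\yb=t(\Lb\xb-\vb)$ and letting $t\to+\infty$ drives this term to $+\infty$, so $g(\vb,\zb)=+\infty$, contradicting $g(\vb,\zb)\le\epsilon<\infty$. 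Hence $\vb=\Lb\xb$ necessarily, and then the finiteness and the bound $\|\vb\|\le\delta$ are consistent with the first part.

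I do not anticipate a genuine obstacle here — the proof is essentially a bookkeeping exercise in the definitions, with the two key facts being the cancellation of the $\la\Lb\xb,\bar\yb\ra$ terms and the feasibility relation $\Lb\xb^*=\0b$. The only point requiring a little care is the role of the assumption $\0b\in Y$: it is exactly what guarantees the supremum of a constant function over $Y$ is attained and equals that constant (rather than being taken over an empty set), so I would make sure to invoke it explicitly. Since the excerpt states this is adapted from \cite[Proposition 2.1]{GL-AADMM}, I would also remark that the argument is identical to the deterministic case and that replacing the quantities by their expectations yields the stochastic analogue verbatim.
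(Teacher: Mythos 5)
Your argument is correct. Note that the paper does not actually prove this proposition—it simply cites \cite[Proposition 2.1]{GL-AADMM}—so there is no in-paper proof to compare against; your derivation (the cancellation of $\la \Lb\xb,\bar\yb\ra$ when $\vb=\Lb\xb$, the use of $\Lb\xb^*=\0b$ to kill the $\la\Lb\xb^*,\yb\ra$ term, and the unboundedness of the linear form $\la\Lb\xb-\vb,\bar\yb\ra$ over $Y=\mathbb{R}^{md}$ forcing $\vb=\Lb\xb$) is exactly the standard argument behind the cited result, and it correctly identifies the role of the hypothesis $\0b\in Y$.
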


\setcounter{equation}{0}
\section{Decentralized Primal-Dual}
\label{sec:exact}

In this section, we describe an algorithmic framework
for solving the saddle point problem \eqref{eqn:saddle} in a decentralized fashion.
The basic scheme of the decentralized primal-dual method in Algorithm~\ref{alg:DPD}
is similar to Chambolle and Pork's primal-dual method in \cite{Chambolle-PD}.
The primal-dual method in \cite{Chambolle-PD} is
an efficient and simple method for solving saddle point problems, which
 can be viewed as a refined version of the primal-dual hybrid gradient method
by Arrow et al.~\cite{arrow1958studies}. However, its design and analysis is more closely related to a few recent important works
which established the ${\cal O}(1/k)$ rate of convergence for solving bilinear saddle point problems (e.g.,~\cite{Nest05-1,Nem05-1,MonSva10-1,he2012on}).
Recently, Chen, Lan and Ouyang~\cite{CheLanOu13-1} incorporated
Bregman distance into the primal-dual method together with an acceleration step.
Dang and Lan~\cite{Dang-Lan}, and Chambolle and Pork~\cite{ChamPoc14-1} discussed improved algorithms
for problems with strongly convex primal or dual functions. Randomized
versions of the primal-dual method have been discussed by
Zhang and Xiao~\cite{Yuchen14}, and Dang and Lan~\cite{Dang-Lan}.
Lan and Zhou~\cite{GLYZ} revealed some inherent relationship between
Nesterov's accelerated gradient method and the primal-dual method, and
presented an optimal randomized incremental gradient method.

Our main goals here in this section are to: 1) adapt the primal-dual framework for a decentralized setting;
and 2) provide complexity results (number of communication
rounds and subgradient computations) separately in terms of primal functional optimality gap and constraint (or consistency) violation.
It should be stressed that the main contributions of this paper exist in
the development of decentralized communication sliding algorithms (see Section \ref{sec:deterministic} and \ref{sec:stochastic}).
However, introducing the basic decentralized primal-dual method here will help us better explain these methods
and provide us with a certain benchmark in terms of the communication cost.

\begin{algorithm}
\caption{Decentralized primal-dual}
\label{alg:DPD}
\begin{algorithmic}
\STATE{Let $\xb^0 = \xb^{-1} \in X^m$ and $\yb^0 \in \mathbb{R}^{md}$, the nonnegative parameters $\{\a_k\}$, $\{\tau_k\}$ and $\{\eta_k\}$, and the weights $\{\t_k\}$ be given.}
\FOR{$k = 1, \ldots, N$}
\STATE{Update $\zb^k = (\xb^k,\yb^k)$ according to}
\begin{align}
\tilde{\xb}^k =&~ \a_k (\xb^{k-1}-\xb^{k-2}) + \xb^{k-1}\label{eqn:algo1}\\
\yb^k =&~ \argmin_{\yb \in \mathbb{R}^{md}}~  \la -\Lb\tilde{\xb}^k, \yb\ra + \tfrac{\tau_k}{2}\|\yb-\yb^{k-1}\|^2\label{eqn:algo2}\\
\xb^k = &~ \argmin_{\xb \in X^m}~  \la \Lb\yb^k, \xb\ra + F(\xb) + \eta_k\mathbf{V}(\xb^{k-1},\xb)\label{eqn:algo3}
\end{align}
\ENDFOR
\RETURN $\bar{\zb}^N = (\tsum_{k=1}^N \t_k)^{-1} \tsum_{k=1}^N \t_k \zb^{k}$.
\end{algorithmic}
\end{algorithm}

\begin{algorithm}
\caption{Decentralized primal-dual update for each agent $i$}
\label{alg:DPD-i}
\begin{algorithmic}
\STATE{Let $x_i^0 = x_i^{-1}\in X_i$ and $y_i^0 \in \mathbb{R}^d$ for $i \in [m]$, the nonnegative parameters $\{\a_k\}$, $\{\tau_k\}$ and $\{\eta_k\}$, and the weights $\{\t_k\}$ be given.}
\FOR{$k = 1, \ldots, N$}
\STATE{Update $z_i^k = (x_i^k,y_i^k)$ according to}
\begin{align}
\tilde{x}_i^k =&~ \a_k (x_i^{k-1}-x_i^{k-2}) + x_i^{k-1}\label{eqn:algo1-i}\\
v_i^k = &~\tsum_{j\in N_i}L_{ij}\tilde{x}_j^k\label{eqn:algo2-i}\\
y_i^k =&~ y_i^{k-1} +\tfrac{1}{\tau_k}v_i^k\label{eqn:algo3-i}\\
w_i^k = &~\tsum_{j\in N_i}L_{ij}y_j^k\label{eqn:algo4-i}\\
x_i^k = &~ \argmin_{x_i\in X_i}~  \la w_i^k, x_i\ra + f_i(x_i) + \eta_kV_i(x_i^{k-1},x_i)\label{eqn:algo5-i}
\end{align}
\ENDFOR
\RETURN $\bar{\zb}^N = (\tsum_{k=1}^N \t_k)^{-1} \tsum_{k=1}^N \t_k \zb^{k}$
\end{algorithmic}
\end{algorithm}

\subsection{The Algorithm}
The primal-dual algorithm in Algorithm~\ref{alg:DPD} can be decentralized due to the structure of the Laplacian $\Lb$.
Recalling that $\xb = (x_1,\ldots,x_m)$ and $\yb = (y_1,\ldots,y_m)$,
each agent $i$'s local update rule can be separately written as in Algorithm~\ref{alg:DPD-i}.
Each agent $i$ maintains two local sequences, namely, the primal estimates $\{x_i^k\}$
and the dual variables $\{y_i^k\}$.
The element $x_i^k$ can be seen as agent $i$'s estimate of the decision variable $x$ at time $k$,
while $y_i^k$ is a subvector of all dual variables $\yb^k$
associated with the agent $i$'s consistency constraints with its neighbors.

More specifically, each primal estimate $x_i^0$ is locally initialized
from some arbitrary point in $X_i$, and $x_i^{-1}$ is also set to be the same value.
At each time step $k \ge 1$, each agent $i \in \Vc$
computes a local
prediction $\tilde{x}_i^k$ using the two previous primal estimates (ref. \eqref{eqn:algo1-i}),
and broadcasts this to all of the nodes in its neighborhood, i.e., to all agents $j \in N_i$.
In \eqref{eqn:algo2-i}-\eqref{eqn:algo3-i},
each agent $i$ calculates the neighborhood disagreement $v_i^k$ using the messages received from agents in $N_i$,
and updates the dual subvector $y_i^k$.
Then, another round of communication occurs in \eqref{eqn:algo4-i} to broadcast this updated dual variables
and calculate $w_i^k$. Therefore, each iteration $k$ involves two communication rounds,
one for the primal estimates and the other for the dual variables.
Lastly, each agent $i$ solves the proximal projection subproblem \eqref{eqn:algo5-i}.
Note that the description of the algorithm is only conceptual at this moment
since we have not specified the parameters $\{\a_k\}$, $\{\tau_k\}$, $\{\eta_k\}$ and $\{\t_k\}$ yet.
We will later instantiate this generic algorithm when we state its convergence properties.

\subsection{Convergence of the Decentralized Primal-dual Method}
For the sake of simplicity, we focus only on the case when $f_i$'s are general convex functions in this section.
We leave the discussion about the case when $f_i$'s are strongly convex later
in Sections~\ref{sec:deterministic} and \ref{sec:stochastic} for decentralized
communication sliding algorithms.

In the following lemma, we present estimates on the gap function defined in \eqref{eqn:gap}
together with conditions on the parameters $\{\a_k\}$, $\{\tau_k\}$, $\{\eta_k\}$ and $\{\t_k\}$,
which will be used to provide the rate of convergence for the decentralized primal-dual method.
The proof of this lemma can be found in Section \ref{sec:conv}.

\begin{lemma}\label{lem:main_exact}
Let the iterates $\zb^k = (\xb^k,\yb^k)$, $k = 1, \ldots, N$ be generated by Algorithm~\ref{alg:DPD}
and $\bar{\zb}^N$ be defined as $\bar{\zb}^N := \left(\tsum_{k=1}^N\theta_k\right)^{-1}\tsum_{k=1}^N\theta_k\zb^k$.
	Assume that the parameters $\{\a_k\}$,  $\{\tau_k\}$, $\{\eta_k\}$ and $\{\t_k\}$ in Algorithm~\ref{alg:DPD} satisfy
	\begin{align}
	\t_{k}\eta_{k} \le &~\t_{k-1}\eta_{k-1}, \quad k = 2, \ldots, N,\label{theta_eta}\\
	\a_{k}\t_{k}=&~\t_{k-1}, \quad k = 2, \ldots, N,\label{alpha_theta}\\
	\t_{k}\tau_{k}\le &~\t_{k-1}\tau_{k-1}, \quad k = 2, \ldots, N,\label{theta_tau}\\
	\a_k \|\Lb\|^2 \le &~\eta_{k-1}\tau_k, \quad k = 2, \ldots, N,\label{eta_tau_L_k}\\
	\theta_1\tau_1 = &~\theta_N\tau_N,\label{eta_tau}\\
	\theta_N\|\Lb\|^2 \le &~ \theta_1\tau_1\eta_N \label{eta_tau_theta}.
	\end{align}
	Then, for any $\zb:=(\xb,\yb)\in X^m\times \bbr^{md}$, we have
	\begin{align}\label{outer_recursion}
	Q(\bar \zb^N;\zb)
    \le \left(\tsum_{k=1}^N \theta_k\right)^{-1}\left[\theta_1\eta_1\mathbf{V}(\xb^0,\xb)+\tfrac{\theta_1\tau_1}{2}\|\yb^0\|^2 + \la \vb,\yb\ra\right],
	\end{align}
	where $Q$ is defined in \eqref{eqn:gap} and $\vb$ is defined as
    \begin{align}\label{eqn:vb}
    \vb := \theta_N\Lb(\xb^N-\xb^{N-1}) + \theta_1\tau_1(\yb^N-\yb^0).
    \end{align}
    Furthermore, for any saddle point $(\xb^*,\yb^*)$ of \eqref{eqn:saddle}, we have
    \begin{align}\label{xNs_bnd_pd}
    \tfrac{\theta_N}{2}\left(1-\tfrac{\|\Lb\|^2}{\eta_N\tau_N}\right)\max\{\eta_N\|\xb^{N-1}-\xb^N\|^2,\tau_N\|\yb^*-\yb^N\|^2\}&\le \theta_1\eta_1\mathbf{V}(\xb^0,\xb^*)+\tfrac{\theta_1\tau_1}{2}\|\yb^*-\yb^0\|^2.
    \end{align}
\end{lemma}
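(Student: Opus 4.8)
The plan is to analyze the three update steps \eqref{eqn:algo1}--\eqref{eqn:algo3} individually, derive per-iteration three-point inequalities, and then combine them with a telescoping argument governed by the weights $\theta_k$. First I would handle the dual update \eqref{eqn:algo2}: since $\yb^k$ minimizes a strongly convex (in fact, quadratic) function in $\yb$, the optimality condition gives, for any $\yb\in\bbr^{md}$,
\[
\la -\Lb\tilde\xb^k,\yb^k-\yb\ra + \tfrac{\tau_k}{2}\|\yb^k-\yb^{k-1}\|^2 + \tfrac{\tau_k}{2}\|\yb^k-\yb\|^2 \le \tfrac{\tau_k}{2}\|\yb^{k-1}-\yb\|^2.
\]
Symmetrically, for the primal update \eqref{eqn:algo3}, using convexity of $F$ and the strong convexity (modulus $1$) of $\mathbf{V}(\xb^{k-1},\cdot)$ together with the three-point property of the Bregman distance, I would obtain for any $\xb\in X^m$,
\[
\la \Lb\yb^k,\xb^k-\xb\ra + F(\xb^k)-F(\xb) + \eta_k\mathbf{V}(\xb^k,\xb) \le \eta_k\mathbf{V}(\xb^{k-1},\xb) - \eta_k\mathbf{V}(\xb^{k-1},\xb^k).
\]
(If $\mu>0$ one could strengthen this, but for this lemma the general-convex version suffices.)

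Next I would assemble the gap: adding the two inequalities and rewriting $\la\Lb\yb^k,\xb^k-\xb\ra - \la\Lb\tilde\xb^k,\yb^k-\yb\ra$ so that it reproduces the bilinear terms in $Q(\zb^k;\zb)=F(\xb^k)-F(\xb)+\la\Lb\xb^k,\yb\ra-\la\Lb\xb,\yb^k\ra$, the discrepancy is exactly $\la\Lb(\xb^k-\tilde\xb^k),\yb^k-\yb\ra$. Substituting the definition $\tilde\xb^k=\xb^{k-1}+\alpha_k(\xb^{k-1}-\xb^{k-2})$, this cross term splits into a piece $\la\Lb(\xb^k-\xb^{k-1}),\yb^k-\yb\ra$ and a piece $-\alpha_k\la\Lb(\xb^{k-1}-\xb^{k-2}),\yb^k-\yb\ra$. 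Multiplying the $k$-th inequality by $\theta_k$ and summing, the relation $\alpha_k\theta_k=\theta_{k-1}$ lets the $\alpha_k$-pieces telescope against the other pieces at consecutive indices; the leftover mismatch term $\theta_k\alpha_k\la\Lb(\xb^{k-1}-\xb^{k-2}),\yb^{k-1}-\yb^k\ra$ is controlled by Young's inequality, splitting it between $\tfrac{\eta_{k-1}}{2}\|\xb^{k-1}-\xb^{k-2}\|^2$ and $\tfrac{\tau_k}{2}\|\yb^k-\yb^{k-1}\|^2$ using precisely the condition $\alpha_k\|\Lb\|^2\le\eta_{k-1}\tau_k$. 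The monotonicity conditions \eqref{theta_eta}, \eqref{theta_tau} then make the $\mathbf{V}$-terms and the dual-distance terms telescope cleanly, and the boundary conditions \eqref{eta_tau}, \eqref{eta_tau_theta} absorb the residual $k=N$ terms into the definition of $\vb$ in \eqref{eqn:vb}. Dividing by $\sum_{k=1}^N\theta_k$ and using convexity of $Q(\cdot;\zb)$ in its first argument (so $Q(\bar\zb^N;\zb)\le(\sum\theta_k)^{-1}\sum\theta_k Q(\zb^k;\zb)$) yields \eqref{outer_recursion}.

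For \eqref{xNs_bnd_pd}, I would specialize the accumulated inequality to $\zb=\zb^*=(\xb^*,\yb^*)$. Since $(\xb^*,\yb^*)$ is a saddle point, $Q(\zb^N;\zb^*)\ge 0$ — or rather, after the telescoping, the left-hand side retains nonnegative terms $\theta_N\eta_N\mathbf{V}(\xb^{N-1},\xb^N)\ge\tfrac{\theta_N\eta_N}{2}\|\xb^{N-1}-\xb^N\|^2$ and $\tfrac{\theta_1\tau_1}{2}\|\yb^N-\yb^*\|^2$ plus the surviving cross term at index $N$, namely $\theta_N\la\Lb(\xb^N-\xb^{N-1}),\yb^N-\yb^*\ra$, which I bound below by $-\tfrac{\|\Lb\|^2\theta_N}{2\tau_N}\|\xb^N-\xb^{N-1}\|^2-\tfrac{\theta_N\tau_N}{2}\|\yb^N-\yb^*\|^2$ via Young; here I use $\theta_1\tau_1=\theta_N\tau_N$ from \eqref{eta_tau} to match coefficients. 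Collecting terms produces the factor $(1-\|\Lb\|^2/(\eta_N\tau_N))$ multiplying $\max\{\eta_N\|\xb^{N-1}-\xb^N\|^2,\tau_N\|\yb^*-\yb^N\|^2\}$ (up to absorbing constants), bounded above by the initial quantity $\theta_1\eta_1\mathbf{V}(\xb^0,\xb^*)+\tfrac{\theta_1\tau_1}{2}\|\yb^*-\yb^0\|^2$.

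The main obstacle I anticipate is the bookkeeping in the telescoping step: keeping track of which cross terms cancel under $\alpha_k\theta_k=\theta_{k-1}$, correctly isolating the single uncancelled mismatch term at each step, and verifying that its Young-splitting is exactly absorbed by the $\eta_{k-1}$ and $\tau_k$ coefficients coming from the previous primal step and the current dual step — this is where the six parameter conditions are consumed and where an off-by-one error is easiest to make. The boundary terms at $k=1$ and $k=N$ also require care to ensure they land precisely in the form of $\vb$ as defined.
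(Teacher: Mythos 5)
Your proposal follows essentially the same route as the paper's proof: three-point optimality inequalities for the dual and primal prox steps, assembly into the gap $Q(\zb^k;\zb)$ with the cross term $\la\Lb(\xb^k-\tilde\xb^k),\yb-\yb^k\ra$, $\theta_k$-weighted telescoping under $\alpha_k\theta_k=\theta_{k-1}$ with Young's inequality consuming $\alpha_k\|\Lb\|^2\le\eta_{k-1}\tau_k$, the boundary conditions producing $\vb$, convexity of $Q$, and finally the two complementary Young splits of the surviving index-$N$ cross term at $\zb=\zb^*$ to obtain both components of the max in \eqref{xNs_bnd_pd}. The argument is correct and matches the paper's.
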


In the following theorem, we provide a specific selection of $\{\alpha_k\}$, $\{\tau_k\}$, $\{\eta_k\}$ and $\{\theta_k\}$ satisfying \eqref{theta_eta}-\eqref{eta_tau_theta}. Using Lemma \ref{lem:main_exact} and Proposition \ref{prop:approx}, we also establish the complexity of the
decentralized primal-dual method for computing an $(\epsilon,\delta)$-solution of problem \eqref{eqn:prob}.
\begin{theorem}\label{exact_thm}
	Let $\xb^*$ be a saddle point of \eqref{eqn:prob}, and suppose that $\{\alpha_k\}$, $\{\tau_k\}$, $\{\eta_k\}$ and $\{\theta_k\}$ are set to
	\beq\label{exact_para}
	\alpha_k=\theta_k = 1,~ \eta_k=2\|\Lb\|,  \mbox{ and } \tau_k=\|\Lb\|,\quad \forall k=1, \dots, N.
	\eeq
	Then, for any $N \ge 1$, we have
\begin{align}\label{eqn:exactobj}
	F(\bar\xb^N)- F(\xb^*)\le \tfrac{\|\Lb\|}{N}\left[2\mathbf{V}(\xb^0,\xb^*)+\tfrac{1}{2}\|\yb^0\|^2\right]
\end{align}
and
\begin{align}\label{eqn:exactfeas}
\|\Lb\bar\xb^N\| \le \tfrac{2\|\Lb\|}{N}\left[ 3\sqrt{\mathbf{V}(\xb^0,\xb^*)} + 2\|\yb^*-\yb^0\|\right],
\end{align}
where $\bar\xb^N = \tfrac{1}{N}\tsum_{k=1}^N\xb^k$.
\end{theorem}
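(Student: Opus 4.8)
The plan is to verify that the parameter choice \eqref{exact_para} satisfies all six conditions \eqref{theta_eta}--\eqref{eta_tau_theta} of Lemma~\ref{lem:main_exact}, then specialize the two conclusions \eqref{outer_recursion} and \eqref{xNs_bnd_pd} and combine them through Proposition~\ref{prop:approx}. First I would check the conditions: with $\alpha_k=\theta_k=1$, $\eta_k=2\|\Lb\|$, $\tau_k=\|\Lb\|$, all the monotonicity relations \eqref{theta_eta}, \eqref{alpha_theta}, \eqref{theta_tau} hold with equality; \eqref{eta_tau_L_k} reads $\|\Lb\|^2\le \eta_{k-1}\tau_k = 2\|\Lb\|^2$, which holds; \eqref{eta_tau} is $\tau_1=\tau_N$, trivially true; and \eqref{eta_tau_theta} reads $\|\Lb\|^2\le \tau_1\eta_N = 2\|\Lb\|^2$, which holds. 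Note also that the factor $1-\|\Lb\|^2/(\eta_N\tau_N) = 1 - \|\Lb\|^2/(2\|\Lb\|^2) = 1/2$ in \eqref{xNs_bnd_pd}.

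For the primal optimality gap, I would apply \eqref{outer_recursion} at the point $\zb=(\xb^*,\yb^*)$... but more usefully, observe that $F(\bar\xb^N)-F(\xb^*) \le Q(\bar\zb^N;\xb^*,\yb)$ for a well-chosen $\yb$; the cleanest route is to invoke Proposition~\ref{prop:approx} with the perturbation vector $\vb$ from \eqref{eqn:vb}. Evaluating \eqref{outer_recursion} with $\theta_k=1$ (so $\sum_{k=1}^N\theta_k = N$), $\eta_1=2\|\Lb\|$, $\tau_1=\|\Lb\|$, and taking the supremum over $\bar\yb\in\mathbb{R}^{md}$ of $Q(\bar\zb^N;\xb^*,\bar\yb)-\langle\vb,\bar\yb\rangle$, the right-hand side becomes $\tfrac{\|\Lb\|}{N}[2\mathbf{V}(\xb^0,\xb^*)+\tfrac12\|\yb^0\|^2]$, i.e. $g(\vb,\bar\zb^N)\le \tfrac{\|\Lb\|}{N}[2\mathbf{V}(\xb^0,\xb^*)+\tfrac12\|\yb^0\|^2]=:\epsilon$. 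By the second part of Proposition~\ref{prop:approx}, $\vb=\Lb\bar\xb^N$; then by the first part, since $\0b\in\mathbb{R}^{md}$, $\bar\xb^N$ is an $(\epsilon,\delta)$-solution with $\delta=\|\vb\|$, which in particular gives \eqref{eqn:exactobj}.

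For the feasibility residual \eqref{eqn:exactfeas}, I would bound $\|\vb\|=\|\Lb\bar\xb^N\|$ directly from its definition \eqref{eqn:vb}: $\|\Lb\bar\xb^N\|=\|\theta_N\Lb(\xb^N-\xb^{N-1})+\theta_1\tau_1(\yb^N-\yb^0)\| \le \|\Lb\|\,\|\xb^N-\xb^{N-1}\| + \|\Lb\|\,\|\yb^N-\yb^0\|$ using $\theta_N=\theta_1=1$, $\tau_1=\|\Lb\|$. The two norms $\|\xb^{N-1}-\xb^N\|$ and $\|\yb^*-\yb^N\|$ are controlled by \eqref{xNs_bnd_pd}: with the factor $1/2$ computed above, $\tfrac{1}{4}\max\{2\|\Lb\|\,\|\xb^{N-1}-\xb^N\|^2,\ \|\Lb\|\,\|\yb^*-\yb^N\|^2\}\le 2\|\Lb\|\mathbf{V}(\xb^0,\xb^*)+\tfrac{\|\Lb\|}{2}\|\yb^*-\yb^0\|^2$, so each of $\|\xb^{N-1}-\xb^N\|$ and $\|\yb^*-\yb^N\|$ is bounded by a constant times $(\mathbf{V}(\xb^0,\xb^*)+\|\yb^*-\yb^0\|^2)^{1/2}$; I would then use $\|\yb^N-\yb^0\|\le \|\yb^N-\yb^*\|+\|\yb^*-\yb^0\|$ and the elementary inequality $\sqrt{a^2+b^2}\le a+b$ (for $a,b\ge0$) together with $\sqrt{\mathbf{V}}$ terms to assemble the stated bound $\tfrac{2\|\Lb\|}{N}[3\sqrt{\mathbf{V}(\xb^0,\xb^*)}+2\|\yb^*-\yb^0\|]$. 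The main obstacle is purely bookkeeping: tracking the numerical constants through \eqref{xNs_bnd_pd} and the triangle inequalities so that they collapse exactly to the coefficients $3$ and $2$ in \eqref{eqn:exactfeas}; there is no conceptual difficulty once the parameter substitutions are made and Proposition~\ref{prop:approx} is invoked.
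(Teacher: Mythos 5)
Your proposal follows the paper's proof essentially verbatim: verify that \eqref{exact_para} satisfies \eqref{theta_eta}--\eqref{eta_tau_theta}, specialize \eqref{outer_recursion} to bound $g(\cdot,\bar\zb^N)$, bound the perturbation vector via \eqref{xNs_bnd_pd} and the triangle inequality, and conclude with Proposition~\ref{prop:approx}. The only nit is notational: the perturbation vector certified by \eqref{outer_recursion} is $\vb^N:=\vb/N$ rather than $\vb$ itself (so Proposition~\ref{prop:approx} identifies $\Lb\bar\xb^N$ with $\vb/N$), which your final $1/N$-scaled bounds implicitly use correctly.
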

\begin{proof}
	It is easy to check that \eqnok{exact_para} satisfies conditions \eqnok{theta_eta}-\eqnok{eta_tau_theta}. Therefore,
by plugging these values in \eqnok{outer_recursion}, we have
	\beq\label{eqn:Qpb}
	Q(\bar \zb^N;\xb^*,\yb)\le \tfrac{1}{N}\left[2\|\Lb\|\mathbf{V}(\xb^0,\xb^*)+\tfrac{\|\Lb\|}{2}\|\yb^0\|^2\right]+\tfrac{1}{N}\la \vb,\yb \ra.
	\eeq
Letting $\vb^N := \tfrac{1}{N}\vb$, then from \eqref{eqn:vb} and \eqref{xNs_bnd_pd} we have
\begin{align*}
\|\vb^N\| &\le  \tfrac{\|\Lb\|}{N}\left[\|\xb^N-\xb^{N-1}\| + \|\yb^N-\yb^*\|+\|\yb^*-\yb^0\|\right]\\
&\le\tfrac{\|\Lb\|}{N}\left[3\sqrt{4\mathbf{V}(\xb^0,\xb^*)+\|\yb^*-\yb^0\|^2}+\|\yb^*-\yb^0\|\right].
\end{align*}
Furthermore, by \eqref{eqn:Qpb} we have
\begin{align*}
	g(\vb^N,\bar\zb^N)\le \tfrac{\|\Lb\|}{N}\left[2\mathbf{V}(\xb^0,\xb^*)+\tfrac{1}{2}\|\yb^0\|^2\right].
\end{align*}
The results in \eqref{eqn:exactobj} and \eqref{eqn:exactfeas} then immediately follow from Proposition \ref{prop:approx} and the
above two inequalities.
\end{proof}

From \eqref{eqn:exactobj}-\eqref{eqn:exactfeas}, we can see that the complexity of decentralized primal-dual method for computing
an $(\epsilon,\delta)$-solution is ${\cal O}(1/\epsilon)$ for the primal functional optimality and ${\cal O}(1/\delta)$ for the constraint violation. Since each iteration involves a constant number of communication rounds,
the number of inter-node communications required is also in the same order.

\setcounter{equation}{0}
\section{Decentralized Communication Sliding}
\label{sec:deterministic}

In this section, we present a new decentralized primal-dual type method, namely,
the decentralized communication sliding (DCS) method
for the case when the primal subproblem \eqref{eqn:algo5-i} is not easy to solve.
We show that one can still maintain the same number of inter-node communications
even when the subproblem \eqref{eqn:algo5-i} is approximately solved through an iterative
subgradient descent procedure, and that
the total number of required subgradient evaluations is comparable to
centralized mirror descent methods. Throughout this section, we consider the deterministic case where exact subgradients of $f_i$'s are available.

\begin{algorithm}
\caption{Decentralized Communication Sliding (DCS)}
\label{alg:DCS}
\begin{algorithmic}
\STATE{Let $x_i^0 = x_i^{-1} = \hat{x}_i^0 \in X_i$, $y_i^0 \in \mathbb{R}^d$ for $i \in [m]$ and the nonnegative parameters $\{\a_k\}$, $\{\tau_k\}$, $\{\eta_k\}$ and $\{T_k\}$ be given.}
\FOR{$k = 1, \ldots, N$}
\STATE{Update $\zb^k = (\hat{\xb}^k,\yb^k)$ according to}
\begin{align}
\tilde{x}_i^k =&~ \a_k (\hat{x}_i^{k-1}-x_i^{k-2}) + x_i^{k-1}\label{eqn:algo1-cs}\\
v_i^k = &~\tsum_{j\in N_i}L_{ij}\tilde{x}_j^k\label{eqn:algo2-cs}\\
y_i^k =&~ \argmin_{y_i \in \mathbb{R}^d}~  \la -v_i^k, y_i\ra + \tfrac{\tau_k}{2}\|y_i-y_i^{k-1}\|^2= y_i^{k-1} +\tfrac{1}{\tau_k}v_i^k\label{eqn:algo3-cs}\\
w_i^k = &~\tsum_{j\in N_i}L_{ij}y_j^k\label{eqn:algo4-cs}\\
(x_i^k,\hat{x}_i^k) =&~ \text{CS}(f_i,X_i,V_i,T_k,\eta_k,w_i^k,x_i^{k-1})\label{eqn:algo5-cs}
\end{align}
\ENDFOR
\RETURN $z_i^N = (\hat{x_i}^N,y_i^N)$

\STATE
\STATE The CS (Communication-Sliding) procedure called at \eqref{eqn:algo5-cs} is stated as follows.\\
\textbf{procedure:} $(x,\hat{x}) =\text{CS}(\phi,U,V,T,\eta,w,x)$
\STATE Let $u^0 = \hat{u}^0 = x$ and the parameters $\{\b_t\}$ and $\{\lambda_t\}$ be given.
\FOR{$t = 1,\ldots,T$}
\STATE
\begin{align}
h^{t-1} = &~  \phi'(u^{t-1}) \in \partial \phi(u^{t-1}) \label{eqn:inner0}\\
u^t = &~ \argmin_{u \in U}\left[\la w + h^{t-1}, u \ra +\eta V(x,u) + \eta\b_tV(u^{t-1},u)\right]\label{eqn:inner1}
\end{align}
\ENDFOR
\STATE Set
\begin{align}
\hat{u}^T := &~ \left(\tsum_{t = 1}^T\lambda_t\right)^{-1}\tsum_{t=1}^T \lambda_tu^t. \label{eqn:inner2}
\end{align}
\STATE Set $x = u^T$ and $\hat{x} = \hat{u}^T$.\\
\textbf{end procedure}
\end{algorithmic}
\end{algorithm}

\subsection{The DCS Algorithm}
We formally describe our DCS algorithm in Algorithm~\ref{alg:DCS}.
We say that an outer iteration of the DCS algorithm, which we call the outer-loop, occurs whenever the index $k$ in
Algorithm~\ref{alg:DCS} is incremented by 1.
Since the subproblems are solved inexactly, the outer-loop of the primal-dual algorithm also needs to be modified in order to attain the best possible rate of convergence. 
In particular, in addition to the primal estimate $\{x_i^k\}$, we let each agent $i$ maintain another
primal sequence $\{\hat{x}_i^k\}$ (cf. the definition of $\tilde x_i^k$ in \eqref{eqn:algo1-cs}), which will
later play a crucial role in the development and convergence proof of the algorithm.
Observe that the DCS method, in spirit, has been inspired by some of our recent work on gradient sliding \cite{Lan-Sliding}.
However, the gradient sliding method in \cite{Lan-Sliding} focuses on how to save gradient evaluations
for solving certain structured convex optimization problems,
rather than how to save communication rounds for decentralized optimization, and its algorithmic scheme is also quite different
from the DCS method.

The steps \eqref{eqn:algo1-cs}-\eqref{eqn:algo4-cs} are similar to those in Algorithm~\ref{alg:DPD-i}
except that the local prediction $\tilde{x}_i^k$ in \eqref{eqn:algo1-cs}
is computed using the two previous primal estimates $\hat{x}_i^{k-1}$ and $x_i^{k-1}$.
The CS procedure in \eqref{eqn:algo5-cs}, which we call the inner loop, solves the subproblem \eqref{eqn:algo5-i} iteratively for $T_k$ iterations.
Each inner loop iteration consists of the computation of the subgradient $f_i(u^{t-1})$ in \eqref{eqn:inner0}
and the solution of the projection subproblem in \eqref{eqn:inner1},
which is assumed to be relatively easy to solve.
Note that the description of the algorithm is only conceptual at this moment
since we have not specified the parameters $\{\a_k\}$, $\{\eta_k\}$, $\{\tau_k\}$, $\{T_k\}$ $\{\b_t\}$
and $\{\lambda_t\}$ yet. We will later instantiate this generic algorithm when we state its convergence properties.

A few remarks about this algorithm are in order.
Firstly, a critical difference of this routine compared to the exact version (Algorithm~\ref{alg:DPD-i}) is that
one needs to compute a pair of approximate solutions $x_i^k$ and $\hat{x}_i^k$.
While both $x_i^k$ and $\hat{x}_i^k$
can be seen as agent $i$'s estimate of the decision variable $x$ at time $k$,
$x_i^k$ will be used to define the subproblem \eqref{eqn:inner1} for the next call to the CS procedure
and $\hat{x}_i^k$ will be used to produce a weighted sum of all the inner loop iterates.
Secondly, since the same $w_i^k$
has been used throughout the $T_k$ iterations of the CS procedure,
no additional communications of the dual variables are required
when performing the subgradient projection step \eqref{eqn:inner1} for $T_k$ times.
This differs from the accelerated gradient methods in \cite{Chen-fastProx,Jakovetic-Fast}
where the number of inter-node communications at each iteration $k$ increase linearly or sublinearly in the order of $k$.

Note that the results of the CS procedure at iteration $k$ for agents $i \in \Vc$ collectively generate a pair
of approximate solutions $\hat{\xb}^k = (\hat{x}_1^k,\ldots,\hat{x}_m^k)$
and $\xb^k = (x_1^k,\ldots,x_m^k)$ to the proximal projection subproblem \eqref{eqn:algo3}.
For later convenience, we refer to the subproblem at iteration $k$ as $\Phi^k(\xb)$, i.e.,
\begin{align}\label{eqn:defPhi_k}
\argmin_{\xb \in X^m} \left\{\Phi^k(\xb) := \la \Lb\yb^k, \xb\ra + F(\xb) + \eta_k\mathbf{V}(\xb^{k-1},\xb)\right\}.
\end{align}

\subsection{Convergence of DCS on General Convex Functions}
We now establish the main convergence properties of the DCS algorithm.
More specifically, we provide in Lemma \ref{lem:main_deter}
an estimate on the gap function defined in \eqref{eqn:gap}
together with stepsize policies which work for the general nonsmooth convex case with $\mu = 0$ (cf. \eqref{eqn:nonsmooth}).
The proof of this lemma can be found in Section \ref{sec:conv}.

\begin{lemma}\label{lem:main_deter}
Let the iterates $(\hat{\xb}^k,\yb^k)$, $k = 1, \ldots, N$ be generated by Algorithm~\ref{alg:DCS}
and $\hat{\zb}^N$ be defined as
$\hat{\zb}^N := \left(\tsum_{k=1}^N\theta_k\right)^{-1}\tsum_{k=1}^N\theta_k(\hat \xb^k,\yb^k)$.
Assume that the objective $f_i$, $i = 1, \ldots, m$, are general nonsmooth convex functions, i.e., $\mu = 0$ and $M >0$.
Let the parameters $\{\a_k\}$, $\{\t_k\}$, $\{\eta_k\}$, $\{\tau_k\}$ and $\{T_k\}$ in Algorithm~\ref{alg:DCS} satisfy
\eqref{alpha_theta}-\eqref{eta_tau_theta} and
\begin{align}\label{theta_eta_d}
\theta_k\tfrac{(T_k+1)(T_k+2)\eta_k}{T_k(T_k+3)} \le \theta_{k-1}\tfrac{(T_{k-1}+1)(T_{k-1}+2)\eta_{k-1}}{T_{k-1}(T_{k-1}+3)},\quad k = 2,\ldots, N.
\end{align}
Let the parameters $\{\lambda_t\}$ and $\{\beta_t\}$ in the CS procedure of Algorithm~\ref{alg:DCS} be set to
\begin{align}\label{eqn:csparam}
\lambda_t = t+1, \quad \beta_t = \tfrac{t}{2}, \quad \forall t \ge1.
\end{align}
Then, we have for all $\zb \in X^m\times \bbr^{md}$,
	\begin{align}\label{eqn:Q0}
	Q(\hat{\zb}^N;\zb)
	&\le \left(\tsum_{k=1}^N \theta_k\right)^{-1}
    \left[\tfrac{(T_1+1)(T_1+2)\theta_1\eta_1}{T_1(T_1+3)}\mathbf{V}(\xb^0,\xb)+\tfrac{\theta_1\tau_1}{2}\|\yb^0\|^2 + \la\hat\vb,\yb\ra
	+ \tsum_{k=1}^N\tfrac{4mM^2\theta_k}{(T_k+3)\eta_k}\right],
	\end{align}
where $\hat\vb := \theta_N \Lb (\hat \xb^N - \xb^{N-1}) + \theta_1\tau_1 (\yb^N - \yb^0)$
and $Q$ is defined in \eqref{eqn:gap}.
	Furthermore, for any saddle point $(\xb^*,\yb^*)$ of \eqref{eqn:saddle}, we have
	\begin{align}\label{xNs_bnd_ns}
	\tfrac{\theta_N}{2}\left(1-\tfrac{\|\Lb\|^2}{\eta_N\tau_N}\right)&\max\{\eta_N\|\hat\xb^N-\xb^{N-1}\|^2,\tau_N\|\yb^*-\yb^N\|^2\}\\\nn
	&~~~\le \tfrac{(T_1+1)(T_1+2)\theta_1\eta_1}{T_1(T_1+3)}\mathbf{V}(\xb^0, \xb^*)
	+\tfrac{\theta_1\tau_1}{2}\|\yb^*-\yb^0\|^2
	+ \tsum_{k=1}^N\tfrac{4mM^2\theta_k}{\eta_k(T_k+3)}.
	\end{align}
\end{lemma}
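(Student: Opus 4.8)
\textbf{Proof proposal for Lemma \ref{lem:main_deter}.}

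The plan is to derive the bound \eqref{eqn:Q0} by combining an inner-loop analysis of the CS procedure with the outer-loop primal-dual telescoping argument already used to prove Lemma \ref{lem:main_exact}. The key observation is that $(x_i^k, \hat x_i^k)$ is a pair of approximate solutions to the proximal subproblem $\Phi^k$ in \eqref{eqn:defPhi_k}, produced by $T_k$ steps of a mirror-descent/sliding scheme on the (nonsmooth) function $f_i$ with the composite prox term $\eta_k V_i(x_i^{k-1}, \cdot)$. So first I would analyze a single call to CS: for fixed $k$, apply the standard three-point/prox-lemma to the update \eqref{eqn:inner1}, exploit convexity of $\phi = f_i$ and the Lipschitz bound $\|f_i'(u)\| \le M$ from \eqref{eqn:nonsmooth} (with $\mu=0$) to control the linearization error $\langle h^{t-1}, u^{t-1} - u\rangle$, and telescope over $t = 1, \dots, T_k$ with the weights $\lambda_t = t+1$ and $\beta_t = t/2$. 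The choice $\beta_t = t/2$ is engineered so that $\eta_k(1+\beta_t)V(u^{t-1},\cdot) - \eta_k \beta_{t+1} V(u^t,\cdot)$ telescopes cleanly against the running $\eta_k V(x, \cdot)$ term, and $\lambda_t = t+1$ matches $\beta_t$ so that $\lambda_t \beta_t = \lambda_{t-1}(1+\beta_{t-1})$-type identities hold. This should yield, for each $i$, an inequality of the form
\begin{align}\label{eqn:innerbound_plan}
\Phi_i^k(\hat x_i^k) - \Phi_i^k(x_i) &\le \tfrac{(T_k+1)(T_k+2)\eta_k}{T_k(T_k+3)}V_i(x_i^{k-1},x_i) - \tfrac{(T_k+1)(T_k+2)\eta_k}{T_k(T_k+3)}V_i(x_i^k,x_i) + \tfrac{4M^2}{(T_k+3)\eta_k},
\end{align}
valid for all $x_i \in X_i$, where $\Phi_i^k$ is the $i$-th summand of $\Phi^k$; summing over $i$ gives the vector version with the $4mM^2$ constant and the $\mathbf V$ prox-functions. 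The precise prefactor $(T_k+1)(T_k+2)/[T_k(T_k+3)]$ is exactly what comes out of dividing the telescoped prox coefficient by $\sum_{t=1}^{T_k}\lambda_t = T_k(T_k+3)/2$.

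Next I would feed \eqref{eqn:innerbound_plan} (summed over $i$) into the outer-loop argument. The structure mirrors the proof of Lemma \ref{lem:main_exact}, but wherever that proof used the \emph{exact} optimality of $\xb^k$ for $\Phi^k$ — i.e. the inequality $\Phi^k(\xb^k) \le \Phi^k(\xb) - \eta_k \mathbf V(\xb^k, \xb)$ arising from strong convexity of the prox term — I substitute instead the inexact version: $\Phi^k(\hat\xb^k) \le \Phi^k(\xb) - c_k \mathbf V(\xb^k,\xb) + d_k$ with $c_k = \theta_k^{-1}$-free coefficient $(T_k+1)(T_k+2)\eta_k/[T_k(T_k+3)]$ and additive error $d_k = 4mM^2/[(T_k+3)\eta_k]$, and I track $\hat\xb^k$ rather than $\xb^k$ in the gap function $Q$. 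The dual update \eqref{eqn:algo3-cs} and the extrapolation \eqref{eqn:algo1-cs} are handled exactly as before (the bilinear coupling terms $\langle \Lb \tilde\xb^k, \cdot\rangle$ telescope via \eqref{alpha_theta} and the Cauchy–Schwarz bound $\langle \Lb(\hat\xb^{k-1}-\xb^{k-1}), \cdot\rangle \le \ldots$ under \eqref{eta_tau_L_k}). Multiplying the $k$-th outer inequality by $\theta_k$ and summing from $k=1$ to $N$, the modified condition \eqref{theta_eta_d} is precisely what is needed for the $c_k$-coefficients to be monotone so that the $\mathbf V(\xb^{k-1},\xb)$ terms telescope, leaving only the $k=1$ boundary term $\tfrac{(T_1+1)(T_1+2)\theta_1\eta_1}{T_1(T_1+3)}\mathbf V(\xb^0,\xb)$; the dual boundary term $\tfrac{\theta_1\tau_1}{2}\|\yb^0\|^2$ and the perturbation vector $\hat\vb$ come out exactly as in \eqref{eqn:vb}–\eqref{outer_recursion} with $\xb^N$ replaced by $\hat\xb^N$; and the accumulated errors contribute $\sum_{k=1}^N 4mM^2\theta_k/[(T_k+3)\eta_k]$. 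Dividing by $\sum_{k=1}^N \theta_k$ gives \eqref{eqn:Q0}.

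For \eqref{xNs_bnd_ns}, I would specialize $\zb = (\xb^*,\yb^*)$ in the summed inequality \emph{before} dividing by $\sum\theta_k$, use $Q(\hat\zb^N;\xb^*,\yb^*)\ge 0$ (saddle-point property), discard the $\langle\hat\vb,\yb^*\rangle$ cross term by completing the square against the negative definite quadratic in $\|\yb^*-\yb^N\|^2$ and $\|\hat\xb^N-\xb^{N-1}\|^2$ that the telescoping leaves behind (this is where the factor $1 - \|\Lb\|^2/(\eta_N\tau_N)$ and conditions \eqref{eta_tau}, \eqref{eta_tau_theta} enter, exactly as in the last step of Lemma \ref{lem:main_exact}), and keep the error sum on the right-hand side. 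The main obstacle I anticipate is the inner-loop analysis: getting the exact constants $(T_k+1)(T_k+2)/[T_k(T_k+3)]$ and $4M^2/[(T_k+3)\eta_k]$ requires carefully matching the weights $\lambda_t, \beta_t$ in the telescoping prox-lemma, handling the extra $\eta_k V(x,u)$ term (which is \emph{not} a Bregman distance centered at the previous iterate and so must be carried through all $T_k$ steps), and correctly passing from the iterate-wise bound to a bound on the weighted average $\hat u^{T_k}$ via convexity of $\Phi_i^k$ — the bookkeeping there, rather than any conceptual difficulty, is the delicate part.
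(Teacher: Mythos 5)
Your proposal is correct and follows essentially the same route as the paper: the paper's Proposition~\ref{prop:inner} is exactly your weighted three-point telescoping of the CS procedure (yielding the $(T_k+1)(T_k+2)/[T_k(T_k+3)]$ prox coefficient and the $4mM^2/[(T_k+3)\eta_k]$ error term after bounding $\tsum_{t=1}^{T_k}(t+1)/t\le 2T_k$), which is then fed into the outer-loop recursion of Lemma~\ref{lem:main_exact} with \eqref{theta_eta_d} replacing \eqref{theta_eta}, and the second estimate is obtained by fixing $\zb=(\xb^*,\yb^*)$ and completing the square exactly as you describe. The only cosmetic differences are that the paper obtains this lemma by setting the stochastic errors to zero in the proof of Lemma~\ref{lem:main_stoch}, and that in your displayed inner bound the coefficient of $V_i(x_i^{k-1},x_i)$ should be $2\eta_k/[T_k(T_k+3)]$ when the left-hand side is written as $\Phi_i^k(\hat x_i^k)-\Phi_i^k(x_i)$; the stated $(T_k+1)(T_k+2)$ factor emerges only after the prox term contained in $\Phi_i^k(x_i)$ is moved to the right-hand side.
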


In the following theorem, we provide a specific selection of $\{\alpha_k\}$, $\{\theta_k\}$, $\{\eta_k\}$, $\{\tau_k\}$ and $\{T_k\}$  satisfying \eqref{alpha_theta}-\eqref{eta_tau_theta} and \eqref{theta_eta_d}. Using Lemma \ref{lem:main_deter} and Proposition \ref{prop:approx}, we also establish the complexity of the DCS method for computing an $(\epsilon,\delta)$-solution of problem \eqref{eqn:prob} when the objective functions are general convex.
	\begin{theorem}\label{deter_thm_ns}
		Let $\xb^*$ be an optimal solution of \eqnok{eqn:prob}, the parameters $\{\lambda_t\}$ and $\{\beta_t\}$ in the CS procedure of Algorithm~\ref{alg:DCS} be set to
		\eqref{eqn:csparam}, and suppose that $\{\alpha_k\}$, $\{\theta_k\}$, $\{\eta_k\}$, $\{\tau_k\}$ and $\{T_k\}$ are set to
		\beq\label{deter_ns_para}
		\alpha_k=\theta_k = 1,~\eta_k=2\|\Lb\|,~\tau_k=\|\Lb\|,\mbox{ and }T_k=\left\lceil \tfrac{m M^2N}{\|\Lb\|^2\tilde D}\right\rceil,
		\quad \forall k=1,\ldots, N,
		\eeq
		for some $\tilde D>0$. Then, for any $N \ge 1$, we have
		\begin{align}\label{eqn:deterobj}
		F(\hat\xb^N)- F(\xb^*) \le \tfrac{\|\Lb\|}{N}\left[3\mathbf{V}(\xb^0,\xb^*)+\tfrac{1}{2}\|\yb^0\|^2+2\tilde D\right]
		\end{align}
		and
		\begin{align}\label{eqn:deterfeas}
		\|\Lb\hat\xb^N\| \le  \tfrac{\|\Lb\|}{N}\left[ 3\sqrt{6\mathbf{V}(\xb^0,\xb^*)+4\tilde D} + 4\|\yb^* - \yb^0\|\right],
		\end{align}
		where $\hat\xb^N = \tfrac{1}{N}\tsum_{k=1}^N \hat\xb^k$.
	\end{theorem}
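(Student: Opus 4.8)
The plan is to verify that the parameter choice \eqref{deter_ns_para} satisfies all the hypotheses of Lemma~\ref{lem:main_deter}, and then specialize the bounds \eqref{eqn:Q0}--\eqref{xNs_bnd_ns} together with Proposition~\ref{prop:approx}, exactly mirroring the structure of the proof of Theorem~\ref{exact_thm}. First I would check the conditions: with $\alpha_k=\theta_k=1$, $\eta_k=2\|\Lb\|$, $\tau_k=\|\Lb\|$, all conditions \eqref{alpha_theta}--\eqref{eta_tau_theta} hold exactly as in the exact case (the sequences are constant), so $\eta_N\tau_N=2\|\Lb\|^2$ gives $1-\|\Lb\|^2/(\eta_N\tau_N)=1/2$. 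Since $T_k\equiv T$ is constant, the monotonicity condition \eqref{theta_eta_d} is trivially an equality. This is the routine part.

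Next I would substitute into \eqref{eqn:Q0}. With $\tsum_{k=1}^N\theta_k=N$, the coefficient $\tfrac{(T+1)(T+2)}{T(T+3)}$ multiplying $\theta_1\eta_1\mathbf{V}(\xb^0,\xb)$ must be bounded; one checks $\tfrac{(T+1)(T+2)}{T(T+3)}=1+\tfrac{2}{T(T+3)}\le\tfrac{3}{2}$ for $T\ge1$, so that term is at most $3\|\Lb\|\,\mathbf{V}(\xb^0,\xb)$. For the error term $\tsum_{k=1}^N\tfrac{4mM^2\theta_k}{(T_k+3)\eta_k}=\tfrac{4mM^2N}{(T+3)\cdot2\|\Lb\|}$, the choice $T=\lceil mM^2N/(\|\Lb\|^2\tilde D)\rceil$ gives $T+3>mM^2N/(\|\Lb\|^2\tilde D)$, hence this sum is at most $\tfrac{2\|\Lb\|\tilde D}{N}\cdot N=2\|\Lb\|\tilde D$; dividing by $N$ contributes $2\|\Lb\|\tilde D/N$. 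Setting $\vb^N:=\tfrac1N\hat\vb$, the gap bound becomes $g(\vb^N,\hat\zb^N)\le\tfrac{\|\Lb\|}{N}[3\mathbf{V}(\xb^0,\xb^*)+\tfrac12\|\yb^0\|^2+2\tilde D]$, which is \eqref{eqn:deterobj} once we invoke Proposition~\ref{prop:approx}.

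For the feasibility bound, I would use \eqref{xNs_bnd_ns} with the same substitutions: the right-hand side becomes at most $3\|\Lb\|\mathbf{V}(\xb^0,\xb^*)+\tfrac{\|\Lb\|}{2}\|\yb^*-\yb^0\|^2+2\|\Lb\|\tilde D$, and the left-hand side is $\tfrac{\|\Lb\|}{2}\max\{2\|\hat\xb^N-\xb^{N-1}\|^2,\|\yb^*-\yb^N\|^2\}$, so both $\|\hat\xb^N-\xb^{N-1}\|$ and $\|\yb^*-\yb^N\|$ are bounded by $\sqrt{6\mathbf{V}(\xb^0,\xb^*)+\|\yb^*-\yb^0\|^2+4\tilde D}$ (possibly after absorbing a constant). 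Then, as in Theorem~\ref{exact_thm}, estimate $\|\vb^N\|\le\tfrac{\|\Lb\|}{N}[\|\hat\xb^N-\xb^{N-1}\|+\|\yb^N-\yb^*\|+\|\yb^*-\yb^0\|]$ and bound the first two terms by the quantity just derived, yielding $\|\vb^N\|\le\tfrac{\|\Lb\|}{N}[3\sqrt{6\mathbf{V}(\xb^0,\xb^*)+4\tilde D}+4\|\yb^*-\yb^0\|]$ after using $\sqrt{a+b}\le\sqrt a+\sqrt b$; Proposition~\ref{prop:approx} then identifies $\vb^N=\Lb\hat\xb^N$, giving \eqref{eqn:deterfeas}.

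The main obstacle is purely bookkeeping: tracking the constants through the $T$-dependent coefficient $\tfrac{(T+1)(T+2)}{T(T+3)}$ and making sure the ceiling in the definition of $T_k$ produces clean constants (the bound $T+3>mM^2N/(\|\Lb\|^2\tilde D)$ is what makes the error term telescope to a quantity independent of $N$ after the final division). There is no conceptual difficulty beyond what is already contained in Lemma~\ref{lem:main_deter}; the argument is a direct specialization.
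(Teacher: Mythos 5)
Your proposal is correct and follows essentially the same route as the paper's own proof: verify the stepsize conditions, bound $\tfrac{(T_1+1)(T_1+2)}{T_1(T_1+3)}\le\tfrac{3}{2}$, use the ceiling in $T_k$ to reduce the error sum in \eqref{eqn:Q0} to $2\|\Lb\|\tilde D$, and then combine \eqref{xNs_bnd_ns} with the triangle inequality and Proposition~\ref{prop:approx} for the feasibility residual. The only detail to keep is that $\|\yb^*-\yb^N\|$ picks up an extra factor of $\sqrt{2}$ relative to $\|\hat\xb^N-\xb^{N-1}\|$, which is exactly the constant the paper absorbs into the leading $3$.
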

	\begin{proof}
		It is easy to check that \eqnok{deter_ns_para} satisfies conditions \eqnok{alpha_theta}-\eqnok{eta_tau_theta}
		and \eqref{theta_eta_d}. Particularly,
		\[
		\tfrac{(T_1+1)(T_1+2)}{T_1(T_1+3)}=1+\tfrac{2}{T_1^2+3T_1}\le \tfrac{3}{2}.
		\]
		Therefore, by plugging in these values to \eqnok{eqn:Q0}, we have
		\begin{align}\label{eqn:Qpbcs}
		Q(\hat \zb^N;\xb^*,\yb)\le \tfrac{\|\Lb\|}{N}\left[3\mathbf{V}(\xb^0,\xb^*)+\tfrac{1}{2}\|\yb^0\|^2+2\tilde D\right]+\tfrac{1}{N}\la \hat\vb,\yb\ra.
		\end{align}
		Letting $\hat\vb^N = \tfrac{1}{N}\hat\vb$, then from \eqref{xNs_bnd_ns}, we have
		\begin{align*}
		\|\hat\vb^N\| &\le  \tfrac{\|\Lb\|}{N}\left[ \|\hat \xb^N - \xb^{N-1}\| + \|\yb^N - \yb^*\|+\|\yb^*-\yb^0\|\right]\\
		&\le \tfrac{\|\Lb\|}{N}\left[3\sqrt{6\mathbf{V}(\xb^0,\xb^*)+\|\yb^*-\yb^0\|^2+4\tilde D} + \|\yb^* - \yb^0\|\right].
		\end{align*}
		Furthermore, by \eqref{eqn:Qpbcs}, we have
		\begin{align*}
		g(\hat\vb^N,\hat \zb^N) \le \tfrac{\|\Lb\|}{N}\left[3\mathbf{V}(\xb^0,\xb^*)+\tfrac{1}{2}\|\yb^0\|^2+2\tilde D\right].
		\end{align*}
		Applying Proposition \ref{prop:approx} to the above two inequalities,
		the results in \eqref{eqn:deterobj} and \eqref{eqn:deterfeas} follow immediately.
	\end{proof}

We now make some remarks about the results obtained in Theorem~\ref{deter_thm_ns}.
Firstly, even though one can choose any $\tilde D > 0$ (e.g., $\tilde D = 1$) in \eqnok{deter_ns_para},
the best selection of $\tilde D$ would be $\mathbf{V}(\xb^0,\xb^*)$ so that the first and third terms in \eqnok{eqn:Qpbcs}
are about the same order. In practice, if there exists an estimate ${\cal D}_{X^m}>0$ s.t.
		\beq\label{V_bndness}
		\mathbf{V}(\xb_1,\xb_2)\le {\cal D}_{X^m}^2, \, \forall \xb_1,\xb_2\in X^m,
		\eeq
then we can set $\tilde D={\cal D}_{X^m}^2$.

Secondly, the complexity of the DCS method directly follows from \eqnok{eqn:deterobj} and \eqnok{eqn:deterfeas}.
For simplicity, let us assume that $X$ is bounded,  $\tilde D={\cal D}_{X^m}^2$
and $\yb^0=\0b$. We can see that the total number of inter-node communication rounds and intra-node subgradient evaluations required by each
agent for finding an $(\epsilon, \delta)$-solution of \eqref{eqn:prob}
can be bounded by
		\beq\label{complexity_ns}
        {\cal O}\left\{\|\Lb\|\max\left(\tfrac{{\cal D}_{X^m}^2}{\epsilon},\tfrac{{\cal D}_{X^m}+\|\yb^*\|}{\delta}\right)\right\}\ \
        \mbox{and} \ \
        {\cal O}\left\{mM^2\max\left(\tfrac{{\cal D}_{X^m}^2}{\epsilon^2},\tfrac{{\cal D}_{X^m}^2+\|\yb^*\|^2}{{\cal D}_{X^m}^2\delta^2}\right)\right\},
		\eeq
respectively. In particular, if $\epsilon$ and $\delta$ satisfy
\beq \label{tolerance_delta}
\tfrac{\epsilon}{\delta} \le \tfrac{{\cal D}_{X^m}^2}{{\cal D}_{X^m}+\|\yb^*\|} ,
\eeq
then the previous two complexity bounds in \eqnok{complexity_ns}, respectively, reduce to
\beq \label{simp_complexity_ns}
{\cal O} \left\{\tfrac{\|\Lb\| {\cal D}_{X^m}^2}{\epsilon}\right\} \ \mbox{and} \ \ {\cal O} \left\{\tfrac{m M^2 {\cal D}_{X^m}^2}{\epsilon^2} \right\}.
\eeq


Thirdly, it is interesting to compare DCS with
the centralized mirror descent method~\cite{nemyud:83} applied to \eqnok{eqn:orgprob}.
In the worst case, the Lipschitz constant of $f$ in  \eqnok{eqn:orgprob} can be bounded by
$M_f \le m M$, and each iteration of the method will incur $m$ subgradient evaluations.
Hence, the total number of subgradient evaluations performed by the mirror descent method
for finding an $\epsilon$-solution of  \eqnok{eqn:orgprob}, i.e.,  a point $\bar x \in X$
such that $f(\bar x) - f^* \le \epsilon$, can be bounded by
\beq \label{complexity_ns_MD}
{\cal O} \left\{ \tfrac{m^3 M^2 {\cal D}_{X}^2}{\epsilon^2}\right\},
\eeq
where ${\cal D}_{X}^2$ characterizes the diameter of $X$, i.e., ${\cal D}_X^2 := \max_{x_1, x_2 \in X} V(x_1,x_2)$.
Noting that ${\cal D}_{X}^2 / {\cal D}_{X^m}^2 = {\cal O}(1 / m)$, and that  the second bound in \eqnok{simp_complexity_ns}
states only the number of subgradient evaluations for each agent in the DCS method,
we conclude that the total number of subgradient evaluations performed by DCS is comparable to
the classic mirror descent method as long as \eqnok{tolerance_delta} holds and hence not improvable in general.

\subsection{Boundedness of $\|\yb^*\|$}

In this subsection, we will provide a bound on the optimal dual multiplier $\yb^*$.
By doing so,
we show that the complexity of DCS algorithm (as well as the stochastic DCS algorithm
in Section 5) only depends on the parameters for the primal problem along with
the smallest singular value of $\Lb$ and the initial point $\yb^0$, even though these algorithms are
intrinsically primal-dual type methods.

%
\begin{theorem}\label{thm:bnd_y}
	Let $\xb^*$ be an optimal solution of \eqref{eqn:prob}. Then there exists an optimal dual multiplier $\yb^*$ for \eqref{eqn:saddle} s.t.
	\beq\label{eqn:bnd_y}
	\|\yb^*\|\le \tfrac{\sqrt{m}M}{\tilde\sigma_{min}(\Lb)},
	\eeq
	where $\tilde\sigma_{min}(\Lb)$ denotes the smallest nonzero singular value of $\Lb$.
\end{theorem}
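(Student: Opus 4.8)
The plan is to read the optimal dual multipliers off the first‑order optimality conditions of the saddle‑point problem \eqref{eqn:saddle}, reduce to a multiplier lying in the range of $\Lb$, and then bound its norm by $\sqrt m M/\tilde\sigma_{\min}(\Lb)$. First I would record that $(\xb^*,\yb^*)$ is a saddle point of \eqref{eqn:saddle} iff $\Lb\xb^*=\0b$ (primal feasibility, automatic since $\xb^*$ solves \eqref{eqn:prob}) and $\xb^*$ minimizes $F(\xb)+\langle\Lb\yb^*,\xb\rangle$ over $X^m$, i.e. $-\Lb\yb^*\in\partial F(\xb^*)+N_{X^m}(\xb^*)$; concretely there are $F'(\xb^*):=(f_1'(x_1^*),\dots,f_m'(x_m^*))\in\partial F(\xb^*)$ and $\pb^*\in N_{X^m}(\xb^*)=\prod_i N_{X_i}(x_i^*)$ with $\Lb\yb^*=-(F'(\xb^*)+\pb^*)$. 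Since $\Lb=L\otimes I_d$ is symmetric, $\ker(\Lb)$ is also its left null space, so $\Lb(\yb^*+\zb)=\Lb\yb^*$ for every $\zb\in\ker(\Lb)$; replacing $\yb^*$ by its projection onto $\ker(\Lb)^\perp=\mathrm{Range}(\Lb)$ keeps $(\xb^*,\yb^*)$ a saddle point and only decreases $\|\yb^*\|$. On $\mathrm{Range}(\Lb)$ the operator $\Lb$ is injective with smallest singular value $\tilde\sigma_{\min}(\Lb)$, so $\|\yb^*\|\le\|\Lb\yb^*\|/\tilde\sigma_{\min}(\Lb)$, and it remains to exhibit a saddle point with $\|\Lb\yb^*\|\le\sqrt m M$.

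For that, the right inequality in \eqref{eqn:nonsmooth} gives $\|f_i'(x)\|\le M$ for every subgradient, so any $\xi=(\xi_1,\dots,\xi_m)\in\partial F(\xb^*)$ satisfies $\|\xi\|=\big(\sum_i\|\xi_i\|^2\big)^{1/2}\le\sqrt m M$. It thus suffices to choose the subgradients and normal vectors so that $F'(\xb^*)+\pb^*$ has norm at most $\|\xi\|$. Since $\Lb\yb^*\in\ker(\Lb)^\perp$, the vector $F'(\xb^*)+\pb^*$ lies in $\ker(\Lb)^\perp$, i.e. $\sum_i(f_i'(x_i^*)+p_i^*)=\0b$. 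Using primal optimality of $x^*$ for $\min_{x\in X}\sum_i f_i(x)$ (with the constraint qualification behind the subdifferential sum rule) I would pick $\xi_i\in\partial f_i(x^*)$ with $-\sum_i\xi_i\in N_X(x^*)$; then $\zeta^*:=-\tfrac1m\sum_i\xi_i\in N_X(x^*)$, and setting $p_i^*\equiv\zeta^*$ gives $F'(\xb^*)+\pb^*=\xi-\1b\otimes\bar\xi$, which is exactly $P_{\ker(\Lb)^\perp}(\xi)$ (subtracting the coordinatewise mean). One checks this is a genuine saddle point ($\Lb\xb^*=\0b$ and $\xi-\1b\otimes\bar\xi\in\partial F(\xb^*)+N_{X^m}(\xb^*)$), whence $\|\Lb\yb^*\|=\|P_{\ker(\Lb)^\perp}(\xi)\|\le\|\xi\|\le\sqrt m M$ and therefore $\|\yb^*\|\le\sqrt m M/\tilde\sigma_{\min}(\Lb)$.

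I expect the main obstacle to be handling the normal‑cone term $\pb^*$ cleanly. The ``common normal vector'' choice $p_i^*\equiv\zeta^*$ is transparent precisely when $N_{X_i}(x^*)=N_X(x^*)$ (e.g. when all $X_i$ coincide, or $x^*$ is interior to the remaining $X_j$); in general the per‑agent normal vectors cannot be taken equal or made to cancel the subgradients coordinatewise, and one must instead argue that among all balanced selections $\sb=(\xi_i+p_i^*)_i$ with $\xi_i\in\partial f_i(x^*)$, $p_i^*\in N_{X_i}(x^*)$ and $\sum_i(\xi_i+p_i^*)=\0b$, the one of smallest Euclidean norm still obeys $\|\sb\|\le\sqrt m M$. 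Simple two‑agent instances show this bound can hold with equality, so it is tight and not improvable; proving it (via the projection characterization of the minimizing selection together with $\0b\in N_{X_i}(x^*)$) is the only step that is not routine, the rest being the spectral reduction to $\mathrm{Range}(\Lb)$ and the uniform bound $\|f_i'\|\le M$.
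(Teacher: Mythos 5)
Your first two steps --- existence of a saddle point, projecting $\yb^*$ onto $\mathrm{Range}(\Lb)=\ker(\Lb)^\perp$ (legitimate since $\Lb$ is symmetric, so the projection is still an optimal multiplier), and the spectral bound $\|\yb^*\|\le\|\Lb\yb^*\|/\tilde\sigma_{\min}(\Lb)$ --- are exactly the paper's argument (its Case 1/Case 2 decomposition $\yb^*=\yb^*_N+\yb^*_C$). The divergence is in the last step. The paper reads the saddle-point inequality ${\cal L}(\xb^*,\yb^*_C)\le{\cal L}(\xb,\yb^*_C)$, $\xb\in X^m$, directly as $-\Lb^\top\yb^*_C\in\partial F(\xb^*)$ and identifies this vector with the stack $(f_1'(x_1^*),\dots,f_m'(x_m^*))$ of subgradients, each block of norm at most $M$, so that $\|\Lb^\top\yb^*_C\|\le\sqrt m M$ with no normal cones appearing. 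You instead write $-\Lb\yb^*\in\partial F(\xb^*)+N_{X^m}(\xb^*)$ and reduce the theorem to the claim that some balanced selection $\sb=(\xi_i+p_i^*)_i$ with $\xi_i\in\partial f_i(x^*)$, $p_i^*\in N_{X_i}(x^*)$ and $\tsum_i(\xi_i+p_i^*)=\0b$ satisfies $\|\sb\|\le\sqrt m M$. You prove this only when the normal cones $N_{X_i}(x^*)$ coincide (so $p_i^*\equiv\zeta^*$ is admissible) and explicitly defer the general case.

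That deferred claim is a genuine gap, and it cannot be closed: it is false once the $X_i$ differ. Take $m=3$, $d=1$, the complete graph (so $\Lb=3I$ on $\1b^\perp$ and $\tilde\sigma_{\min}(\Lb)=3$), $X_1=X_2=[0,1]$, $X_3=[1,2]$, $f_1(x)=f_2(x)=Mx$, $f_3\equiv 0$. Then $X=\{1\}$, $\xb^*=(1,1,1)$, and the per-agent optimality conditions force any $g=\Lb\yb^*$ to satisfy $g_1\le -M$, $g_2\le -M$, $g_3\ge 0$, while $\1b^\top g=0$ gives $g_3\ge 2M$; hence $\|g\|\ge\sqrt 6 M$ and every optimal multiplier has $\|\yb^*\|\ge\sqrt 6 M/3>\sqrt 3 M/3=\sqrt m M/\tilde\sigma_{\min}(\Lb)$. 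So the minimal-norm balanced selection here has norm $\sqrt 6 M>\sqrt m M$, your tightness intuition from two-agent examples does not persist for $m\ge 3$, and the same example shows that the paper's identification of $-\Lb^\top\yb^*_C$ with an $M$-bounded stack of subgradients silently assumes the normal-cone contributions vanish (e.g.\ all $X_i$ equal, or $x_i^*$ interior to $X_i$). In that regime your construction with $p_i^*\equiv\zeta^*$ goes through and coincides with the paper's proof; outside it, the selection lemma you would need is false, so the honest repair is to state the extra hypothesis under which your common-normal-vector step is valid rather than to pursue that lemma.
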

\begin{proof}
	Since we only relax the linear constraints in problem \eqref{eqn:prob} to obtain the Lagrange dual problem \eqref{eqn:saddle},
	it follows from the strong Lagrange duality and the existence of $\xb^*$ to \eqnok{eqn:prob} that an optimal dual multiplier $\yb^*$ for problem \eqref{eqn:saddle} must exist.
	It is clear that
	\[
	\yb^*=\yb^*_{N} + \yb^*_{C}, 
	\]
	where $\yb^*_{N}$ and $\yb^*_{C}$ denote the projections of $\yb^*$ over the null space and the column space of $\Lb^T$, respectively.
	
	We consider two cases. Case 1) $\yb^*_{C}=\0b$. Since $\yb^*_{N}$ belongs to the null space of $\Lb^T$,
	$\Lb^T\yb^*=\Lb^T\yb^*_{N}=\0b$, which implies that for any $c\in \bbr$, $c\yb^*$ is also an optimal dual multiplier of \eqref{eqn:saddle}.
	Therefore, \eqref{eqn:bnd_y} clearly holds, because we can scale $\yb^*$ to an arbitrarily small vector.
	
	Case 2) $\yb^*_{C} \ne \0b$. Using the fact that $\Lb^T\yb^*=\Lb^T\yb^*_{C}$ and the definition of a saddle point of \eqref{eqn:saddle},
	we conclude that $\yb^*_{C}$ is also an optimal dual multiplier of \eqref{eqn:saddle}.
	Since $\yb^*_C$ is in the column space of $\Lb$, we have
	\[
	\|\Lb^T\yb^*_C\|^2
	=(\yb^*_C)^T\Lb\Lb^T\yb^*_C=(\yb^*_C)^T\mathbf{U}^T\mathbf{\Lambda}\mathbf{U}\yb^*_C
	\ge \tilde\lambda_{\min}(\Lb\Lb^T)\|\mathbf{U}\yb^*_C\|^2
	=\tilde\sigma_{\min}^2(\Lb)\|\yb^*_C\|^2,
	\]
	where $\mathbf{U}$ is an orthonormal matrix whose rows consist of the eigenvectors of $\Lb\Lb^T$, $\mathbf{\Lambda}$ is
	the diagonal matrix whose diagonal elements are the corresponding eigenvalues, $\tilde\lambda_{min}(\Lb\Lb^T)$ denotes
	the smallest nonzero eigenvalue of $\Lb\Lb^T$, and
$\tilde\sigma_{min}(\Lb)$ denotes the smallest nonzero singular value of $\Lb$.	
	In particular,
	\beq\label{bnd_y_sigma}
	\|\yb^*_{C}\|\le \tfrac{\|\Lb^T\yb^*_{C}\|}{\tilde\sigma_{\min}(\Lb)}.
	\eeq
	Moreover, if we denote the saddle point problem defined in \eqref{eqn:saddle} as follows:
	\[
	{\cal L}(\xb,\yb):=F(\xb)+\la \Lb\xb,\yb\ra.
	\]
	By the definition of a saddle point of \eqref{eqn:saddle}, we have	${\cal L}(\xb^*,\yb^*_{C})\le {\cal L}(\xb, \yb^*_{C})$, i.e.,
	\[
	F(\xb^*)-F(\xb)\le \la -\Lb^T\yb^*_{C},\xb-\xb^*\ra.
	\]
	Hence, from the definition of subgradients, we conclude that $-\Lb^T\yb^*_{C}\in \partial F(\xb^*)$, which together with the fact that $F(\cdot)$ is Lipschitz continuous implies that
	\[
	\|\Lb^T\yb^*_{C}\|=\|\tsum_{i=1}^mf_i'(x_i^*)\|\le \sqrt{m}M.
	\]
	Our result in \eqref{eqn:bnd_y} follows immediately from the above relation, \eqref{bnd_y_sigma} and the fact that $\yb^*_{C}$ is also an optimal dual multiplier of \eqref{eqn:saddle}.
\end{proof}

Observe that our bound for the dual multiplier $\yb^*$ in \eqref{eqn:bnd_y} contains only the primal information.
Given an initial dual multiplier $\yb^0$, this result can be used to provide
an upper bound on $\|\yb^0-\yb^*\|$ in Theorems \ref{exact_thm}-\ref{stoch_thm_s} throughout this paper.
Note also that we can assume $\yb^0 = 0$ to simplify these complexity bounds.

\subsection{Convergence of DCS on Strongly Convex Functions}
In this subsection, we assume that the objective functions $f_i$'s are strongly convex (i.e.,  $\mu > 0$ \eqref{eqn:nonsmooth}).
In order to take advantage of the strong convexity of the objective functions,
we assume that the prox-functions $V_i(\cdot,\cdot)$, $i=1,\dots,m$, (cf. \eqref{eqn:def_V}) are growing quadratically
with the \textit{quadratic growth constant} $\mathcal{C}$, i.e.,
there exists a constant $\mathcal{C}>0$ such that
\begin{align}\label{eqn:proxquad}
V_i(x_i,u_i) \le \tfrac{\mathcal{C}}{2}\|x_i-u_i\|_{X_i }^2, \quad \forall x_i, u_i \in X_i,\ i=1,\ldots,m.
\end{align}
By \eqref{eqn:def_V_i_s}, we must have $\mathcal{C}\ge 1$.

We next provide in Lemma \ref{lem:main_deter_sc}
an estimate on the gap function defined in \eqref{eqn:gap}
together with stepsize policies which work for the strongly convex case.
The proof of this lemma can be found in Section \ref{sec:conv}.

\begin{lemma}\label{lem:main_deter_sc}
Let the iterates $(\hat{\xb}^k,\yb^k)$, $k = 1, \ldots, N$ be generated by Algorithm~\ref{alg:DCS}
and $\hat{\zb}^N$ be defined as
$\hat{\zb}^N := \left(\tsum_{k=1}^N\theta_k\right)^{-1}\tsum_{k=1}^N\theta_k(\hat \xb^k,\yb^k)$.
Assume the objective $f_i$, $i = 1, \ldots, m$ are strongly convex functions, i.e., $\mu, M >0$.
Let the parameters $\{\a_k\}$, $\{\t_k\}$, $\{\eta_k\}$ and $\{\tau_k\}$ in Algorithm~\ref{alg:DCS} satisfy
\eqref{alpha_theta}-\eqref{eta_tau_theta} and
\begin{align}\label{theta_eta_ds}
\theta_k\eta_k \le \theta_{k-1}(\mu/\mathcal{C}+\eta_{k-1}), \quad k = 2, \ldots, N.
\end{align}
Let the parameters $\{\lambda_t\}$ and $\{\beta_t\}$ in the CS procedure of Algorithm~\ref{alg:DCS} be set to
\begin{align}\label{eqn:csparam_sc}
\lambda_t = t, \quad \beta^{(k)}_t = \tfrac{(t+1)\mu}{2\eta_k\mathcal{C}} + \tfrac{t-1}{2}, \quad \forall t \ge1.
\end{align}
Then, we have for all $\zb \in X^m\times \bbr^{md}$
    \begin{align}\label{eqn:Q02}
    Q(\hat{\zb}^N;\zb)
    &\le \left(\tsum_{k=1}^N \theta_k\right)^{-1}
    \Bigg[\theta_1\eta_1\mathbf{V}(\xb^0,\xb)+\tfrac{\theta_1\tau_1}{2}\|\yb^0\|^2 + \la \hat\vb,\yb\ra
    +\tsum_{k=1}^N\tsum_{t=1}^{T_k}\tfrac{2mM^2\theta_k}{T_k(T_k+1)}\tfrac{t} {(t+1)\mu/\mathcal{C}+(t-1)\eta_k}\Bigg],
    \end{align}
where $\hat\vb := \theta_N \Lb (\hat \xb^N - \xb^{N-1}) + \theta_1\tau_1 (\yb^N - \yb^0)$
and $Q$ is defined in \eqref{eqn:gap}.
    Furthermore, for any saddle point $(\xb^*,\yb^*)$ of \eqref{eqn:saddle}, we have
    \begin{align}\label{eqn:xNbnd}
    \tfrac{\theta_N}{2}\left(1-\tfrac{\|\Lb\|^2}{\eta_N\tau_N}\right)&\max\{\eta_N\|\hat\xb^N-\xb^{N-1}\|^2,\tau_N\|\yb^*-\yb^N\|^2\}\\\nn
    &~\le \theta_1\eta_1\mathbf{V}(\xb^0, \xb^*)+\tfrac{\theta_1\tau_1}{2}\|\yb^*-\yb^0\|^2
    +\tsum_{k=1}^N\tsum_{t=1}^{T_k}\tfrac{2mM^2\theta_k}{T_k(T_k+1)}\tfrac{t} {(t+1)\mu/\mathcal{C}+(t-1)\eta_k}.
    \end{align}
\end{lemma}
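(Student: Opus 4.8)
The plan is to adapt the analysis of Lemma \ref{lem:main_deter} to the strongly convex case, where the key new ingredient is that the CS procedure solves the primal subproblem $\Phi^k$ more accurately by exploiting the strong convexity of each $f_i$. I would organize the proof in three layers: (i) an inner-loop estimate for the CS procedure, (ii) a one-step recursion for the outer primal-dual iteration, and (iii) telescoping to obtain \eqref{eqn:Q02}, followed by specializing $\yb \to \yb^*$ to get \eqref{eqn:xNbnd}.

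First I would analyze the CS procedure applied to $\phi = f_i$ with the parameters $\lambda_t = t$ and $\beta_t^{(k)} = (t+1)\mu/(2\eta_k\mathcal{C}) + (t-1)/2$. Using the optimality condition of the prox-step \eqref{eqn:inner1}, the three-point property of the Bregman distance $V_i$, and the strong convexity inequality $f_i(u) \ge f_i(u^{t-1}) + \langle h^{t-1}, u - u^{t-1}\rangle + \tfrac{\mu}{2}\|u - u^{t-1}\|_{X_i}^2$ together with the bound $\|h^{t-1}\|_* \le M$ from \eqref{eqn:nonsmooth}, I would derive a per-iteration inequality bounding $\lambda_t[\Phi_i^k(u^t) - \Phi_i^k(x)]$ (where $\Phi_i^k$ is the $i$-th component of the subproblem \eqref{eqn:defPhi_k}) in terms of a telescoping difference of weighted Bregman distances plus an error term of order $M^2\lambda_t/(\eta_k\beta_t^{(k)})$. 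The quadratic growth condition \eqref{eqn:proxquad} is what lets the $\mu$-term in $\beta_t^{(k)}$ absorb the strong convexity gain $\mu\|u^t - u\|^2$; this is precisely why we can take $\beta_t^{(k)}$ growing only linearly rather than needing $T_k$ large. Summing over $t = 1,\ldots,T_k$ with these weights, using convexity of $\Phi_i^k$ to pass from $\hat u^{T_k}$ to the average, and choosing the weights so the initial Bregman term has coefficient matching $\eta_k$ (not an inflated $(T_k+1)(T_k+2)/(T_k(T_k+3))$ factor as in the convex case), I would obtain
\begin{align}\nn
\Phi_i^k(\hat x_i^k) \le \Phi_i^k(x) + \eta_k V_i(x_i^{k-1}, x) - (\mu/\mathcal{C} + \eta_k)V_i(u^{T_k}, x) + \tsum_{t=1}^{T_k}\tfrac{2M^2}{T_k(T_k+1)}\tfrac{t}{(t+1)\mu/\mathcal{C} + (t-1)\eta_k}
\end{align}
for any $x \in X_i$, where I have used $\lambda_t$-weighting so that $\sum \lambda_t = T_k(T_k+1)/2$. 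Summing over $i$ and recalling $V_i(u^{T_k},x) = V_i(x_i^k, x)$ gives the crucial approximate-subproblem inequality: $\Phi^k(\hat\xb^k) \le \Phi^k(\xb) + \eta_k\mathbf{V}(\xb^{k-1},\xb) - (\mu/\mathcal{C}+\eta_k)\mathbf{V}(\xb^k,\xb) + \epsilon_k$ with $\epsilon_k := \sum_{t=1}^{T_k}\tfrac{2mM^2}{T_k(T_k+1)}\tfrac{t}{(t+1)\mu/\mathcal{C}+(t-1)\eta_k}$.

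Next I would plug this into the outer primal-dual recursion exactly as in the proof of Lemma \ref{lem:main_exact} / Lemma \ref{lem:main_deter}. The dual step \eqref{eqn:algo3-cs} and the bilinear coupling term are handled identically — giving the $\|\Lb\|^2$-versus-$\eta_{k-1}\tau_k$ balance from \eqref{eta_tau_L_k} — and the primal step is replaced by the approximate inequality above. The new feature is the extra negative term $-(\mu/\mathcal{C}+\eta_k)\mathbf{V}(\xb^k,\xb)$, which must dominate the next iteration's positive term $\theta_{k+1}\eta_{k+1}\mathbf{V}(\xb^k,\xb)$ when we multiply by $\theta_k$ and sum; this is exactly condition \eqref{theta_eta_ds}, $\theta_k\eta_k \le \theta_{k-1}(\mu/\mathcal{C}+\eta_{k-1})$. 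Telescoping with \eqref{alpha_theta}, \eqref{theta_tau}, \eqref{eta_tau}, \eqref{eta_tau_theta}, and collecting the residual $\vb$-terms into $\hat\vb = \theta_N\Lb(\hat\xb^N - \xb^{N-1}) + \theta_1\tau_1(\yb^N - \yb^0)$ yields \eqref{eqn:Q02}, with the error sum $\sum_k \theta_k\epsilon_k$ appearing as the last bracketed term. For \eqref{eqn:xNbnd}, I would set $\zb = \zb^* = (\xb^*,\yb^*)$, use $Q(\hat\zb^N;\zb^*) \ge 0$, and retrace the telescoped identity keeping the nonnegative quadratic terms at step $N$ (i.e. not discarding $\tfrac{\theta_N\eta_N}{2}\|\hat\xb^N - \xb^{N-1}\|^2$ and $\tfrac{\theta_N\tau_N}{2}\|\yb^* - \yb^N\|^2$ and absorbing the cross term via \eqref{eta_tau_L_k}), which gives the stated one-sided bound.

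The main obstacle will be the inner-loop estimate — specifically, choosing the weights $\lambda_t = t$ and the $t$-dependent stepsize coefficients $\beta_t^{(k)}$ so that three things simultaneously hold: the Bregman terms telescope cleanly across $t$; the leftover term at $t = T_k$ is $(\mu/\mathcal{C}+\eta_k)V_i(x_i^k,x)$ with exactly the coefficient needed for the outer condition \eqref{theta_eta_ds}; and the error term has the precise form $\tfrac{t}{(t+1)\mu/\mathcal{C}+(t-1)\eta_k}$ summed against $\lambda_t$. Getting the algebra of the denominators to match requires carefully tracking how $\eta_k\beta_t^{(k)} = (t+1)\mu/(2\mathcal{C}) + (t-1)\eta_k/2$ interacts with the accumulated strong-convexity coefficients, and this bookkeeping is where the proof is most delicate; the rest is a routine (if lengthy) adaptation of the deterministic convex argument.
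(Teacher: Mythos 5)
Your proposal follows essentially the same route as the paper: the inner-loop estimate you describe is exactly the paper's Proposition~\ref{prop:inner} specialized to $\lambda_t=t$ and $\beta_t^{(k)}$ as in \eqref{eqn:csparam_sc} (the paper in fact proves the stochastic Lemma~\ref{lem:main_stoch_sc} first and obtains this lemma by setting the noise $\delta_i^{t-1,k}=0$), and your outer telescoping with \eqref{theta_eta_ds} replacing \eqref{theta_eta} is the paper's argument verbatim. One small caution: as literally displayed, your intermediate inequality double-counts the term $\eta_k V_i(x_i^{k-1},x_i)$ (it already sits inside $\Phi^k(\xb)$) and omits the $-\eta_k\mathbf{V}(\xb^{k-1},\hat{\xb}^k)$ term that the outer recursion needs to absorb the bilinear coupling via \eqref{eta_tau_L_k}, but your surrounding discussion makes clear you intend the correct coefficients, so this is a notational slip rather than a gap.
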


In the following theorem, we provide a specific selection of $\{\alpha_k\}$, $\{\theta_k\}$, $\{\eta_k\}$, $\{\tau_k\}$ and $\{T_k\}$  satisfying \eqref{alpha_theta}-\eqref{eta_tau_theta} and \eqref{theta_eta_ds}. Also, by using Lemma \ref{lem:main_deter_sc} and Proposition \ref{prop:approx}, we establish the complexity of the DCS method for computing an $(\epsilon,\delta)$-solution of problem \eqref{eqn:prob} when the objective functions are strongly convex. The choice of variable stepsizes rather than using constant stepsizes
will accelerate its convergence rate.
\begin{theorem}\label{deter_thm_s}
	Let $\xb^*$ be an optimal solution of \eqnok{eqn:prob}, the parameters $\{\lambda_t\}$ and $\{\beta_t\}$ in the CS procedure of Algorithm~\ref{alg:DCS} be set to
	\eqref{eqn:csparam_sc} and suppose that $\{\alpha_k\}$, $\{\theta_k\}$,
$\{\eta_k\}$, $\{\tau_k\}$ and $\{T_k\}$ are set to
	\beq\label{deter_s_para}
    \alpha_k=\tfrac{k}{k+1},~ \theta_k = k+1,~\eta_k=\tfrac{k\mu}{2\mathcal{C}},~\tau_k=\tfrac{4\|\Lb\|^2\mathcal{C}}{(k+1)\mu},
	\mbox{ and }T_k=\left\lceil \sqrt{\tfrac{2m}{\tilde D}}\tfrac{\mathcal{C}MN}{\mu}\max\left\{\sqrt{\tfrac{2m}{\tilde D}}\tfrac{4\mathcal{C}M}{\mu}, 1\right\}\right\rceil,
	\eeq
	$\forall k=1, \ldots, N$, for some $\tilde D>0$. Then, for any $N \ge 2$, we have
\begin{align}\label{eqn:deterobjs}
    F(\hat\xb^N)-F(\xb^*)
    &\le \tfrac{2}{N(N+3)}
    \left[\tfrac{\mu}{\mathcal{C}}\mathbf{V}(\xb^0,\xb^*)+\tfrac{2\|\Lb\|^2\mathcal{C}}{\mu}\|\yb^0\|^2 + \tfrac{2\mu\tilde D}{\mathcal{C}}\right],
\end{align}
and
\begin{align}\label{eqn:deterfeass}
\|\Lb\hat\xb^N\|
\le &~ \tfrac{8\|\Lb\|}{N(N+3)} \left[3\sqrt{2\tilde D+ \mathbf{V}(\xb^0,\xb^*)}+\tfrac{7\|\Lb\|\mathcal{C}}{\mu}\|\yb^*-\yb^0\|\right],
\end{align}
where
$\hat\xb^N = \tfrac{2}{N(N+3)}\tsum_{k=1}^N(k+1)\hat\xb^k$.
\end{theorem}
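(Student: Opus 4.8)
The plan is to follow the route of the proof of Theorem~\ref{deter_thm_ns}, now invoking Lemma~\ref{lem:main_deter_sc} in place of Lemma~\ref{lem:main_deter}. First I would check that the choice \eqref{deter_s_para}, together with the CS parameters \eqref{eqn:csparam_sc}, satisfies all the hypotheses of Lemma~\ref{lem:main_deter_sc}. With $\alpha_k=k/(k+1)$ and $\theta_k=k+1$ one gets $\alpha_k\theta_k=k=\theta_{k-1}$, so \eqref{alpha_theta} holds; since $\theta_k\tau_k=4\|\Lb\|^2\mathcal{C}/\mu$ is independent of $k$, conditions \eqref{theta_tau} and \eqref{eta_tau} hold with equality; \eqref{eta_tau_L_k} reduces to $k/(k+1)\le 2(k-1)/(k+1)$, true for $k\ge 2$; \eqref{eta_tau_theta} reduces to $N+1\le 2N$; and \eqref{theta_eta_ds} holds with equality since $\theta_k\eta_k=k(k+1)\mu/(2\mathcal{C})=\theta_{k-1}(\mu/\mathcal{C}+\eta_{k-1})$. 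I would also record $\eta_N\tau_N=2N\|\Lb\|^2/(N+1)$, so that $1-\|\Lb\|^2/(\eta_N\tau_N)=(N-1)/(2N)$; this is positive precisely when $N\ge 2$, which explains that hypothesis, and it is the origin of the $N(N+3)$ and $N^2-1$ factors in \eqref{eqn:deterobjs}--\eqref{eqn:deterfeass}.

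Substituting \eqref{deter_s_para} into \eqref{eqn:Q02} and using $\tsum_{k=1}^N\theta_k=N(N+3)/2$, $\theta_1\eta_1=\mu/\mathcal{C}$ and $\theta_1\tau_1/2=2\|\Lb\|^2\mathcal{C}/\mu$ yields
\[
Q(\hat\zb^N;\xb^*,\yb)\le \tfrac{2}{N(N+3)}\Big[\tfrac{\mu}{\mathcal{C}}\mathbf{V}(\xb^0,\xb^*)+\tfrac{2\|\Lb\|^2\mathcal{C}}{\mu}\|\yb^0\|^2+S_N\Big]+\la\hat\vb^N,\yb\ra,
\]
where $\hat\vb^N:=\tfrac{2}{N(N+3)}\hat\vb$ and $S_N:=\tsum_{k=1}^N\tsum_{t=1}^{T_k}\tfrac{2mM^2\theta_k}{T_k(T_k+1)}\cdot\tfrac{t}{(t+1)\mu/\mathcal{C}+(t-1)\eta_k}$. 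The step I expect to be the main obstacle is to show that the choice of $T_k$ in \eqref{deter_s_para} forces $S_N\le 2\mu\tilde D/\mathcal{C}$. I would split each inner sum into its $t=1$ term, which equals $\mathcal{C}/(2\mu)$ and whose aggregate contribution is $\tfrac{\mathcal{C}}{2\mu}\tfrac{1}{T_k(T_k+1)}\tsum_k\theta_k=\Theta(N^2/T_k^2)\cdot(\mu\tilde D/\mathcal{C})$, bounded because $T_k=\Omega(N)$; and the terms $t\ge 2$, for which the linear growth $\eta_k=k\mu/(2\mathcal{C})$ gives $\tfrac{t}{(t+1)\mu/\mathcal{C}+(t-1)\eta_k}\le\tfrac{t}{(t-1)\eta_k}\le\tfrac{4\mathcal{C}}{k\mu}$, so their contribution is ${\cal O}\big(\tfrac{mM^2\mathcal{C}}{T_k\mu}\tsum_k\tfrac{k+1}{k}\big)={\cal O}\big(\tfrac{mM^2\mathcal{C}N}{T_k\mu}\big)$ after absorbing the harmonic factor. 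With $T_k=\lceil\sqrt{2m/\tilde D}\,(\mathcal{C}MN/\mu)\max\{4\sqrt{2m/\tilde D}\,\mathcal{C}M/\mu,1\}\rceil$ both contributions become of order $\mu\tilde D/\mathcal{C}$; keeping the explicit constants under control (in particular handling the $\max$ and the $\tsum_k 1/k$ term) is the delicate bookkeeping. Once $S_N\le 2\mu\tilde D/\mathcal{C}$ is in hand, taking the supremum over $\yb$ in the displayed inequality gives $g(\hat\vb^N,\hat\zb^N)\le\tfrac{2}{N(N+3)}[\tfrac{\mu}{\mathcal{C}}\mathbf{V}(\xb^0,\xb^*)+\tfrac{2\|\Lb\|^2\mathcal{C}}{\mu}\|\yb^0\|^2+\tfrac{2\mu\tilde D}{\mathcal{C}}]$, and \eqref{eqn:deterobjs} follows from Proposition~\ref{prop:approx}.

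Finally, for \eqref{eqn:deterfeass} I would bound $\|\Lb\hat\xb^N\|=\|\hat\vb^N\|=\tfrac{2}{N(N+3)}\|\hat\vb\|$ via $\hat\vb=\theta_N\Lb(\hat\xb^N-\xb^{N-1})+\theta_1\tau_1(\yb^N-\yb^0)$ and $\|\yb^N-\yb^0\|\le\|\yb^N-\yb^*\|+\|\yb^*-\yb^0\|$. The quantities $\|\hat\xb^N-\xb^{N-1}\|$ and $\|\yb^*-\yb^N\|$ are controlled by \eqref{eqn:xNbnd}: its right-hand side is at most $R:=\tfrac{\mu}{\mathcal{C}}\mathbf{V}(\xb^0,\xb^*)+\tfrac{2\|\Lb\|^2\mathcal{C}}{\mu}\|\yb^*-\yb^0\|^2+\tfrac{2\mu\tilde D}{\mathcal{C}}$ (using $S_N\le 2\mu\tilde D/\mathcal{C}$ once more), while on its left-hand side the coefficient $\tfrac{\theta_N}{2}(1-\|\Lb\|^2/(\eta_N\tau_N))$ equals $(N^2-1)/(4N)$; hence $\eta_N\|\hat\xb^N-\xb^{N-1}\|^2\le 4NR/(N^2-1)$ and $\tau_N\|\yb^*-\yb^N\|^2\le 4NR/(N^2-1)$. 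Plugging in $\eta_N=N\mu/(2\mathcal{C})$, $\tau_N=4\|\Lb\|^2\mathcal{C}/((N+1)\mu)$, $\theta_N=N+1$, $\theta_1\tau_1=4\|\Lb\|^2\mathcal{C}/\mu$, then using $\sqrt{a+b}\le\sqrt a+\sqrt b$ to split off $\tilde D$ inside $R$ and the inequalities $(N+1)/(N-1)\le 3$, $N/(N-1)\le 2$ for $N\ge 2$ to absorb the numerical factors, gives $\|\Lb\hat\xb^N\|\le\tfrac{8\|\Lb\|}{N(N+3)}[3\sqrt{2\tilde D+\mathbf{V}(\xb^0,\xb^*)}+\tfrac{7\|\Lb\|\mathcal{C}}{\mu}\|\yb^*-\yb^0\|]$; the final identification of $\hat\vb^N$ with $\Lb\hat\xb^N$ is once again Proposition~\ref{prop:approx}.
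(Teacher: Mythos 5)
Your proposal is correct and follows essentially the same route as the paper's proof: verify the parameter conditions, bound the inner double sum by $2\mu\tilde D/\mathcal{C}$ by separating the $t=1$ term from the $t\ge 2$ terms, substitute into \eqref{eqn:Q02} to control $g(\hat\vb^N,\hat\zb^N)$, and use \eqref{eqn:xNbnd} to bound $\|\hat\vb^N\|$ before invoking Proposition~\ref{prop:approx}. The only cosmetic difference is that you discard the $(t+1)\mu/\mathcal{C}$ term when bounding the $t\ge 2$ summands while the paper keeps a $(t-1)(k+1)$ denominator, but both yield the same constants.
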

\begin{proof}
	It is easy to check that \eqnok{deter_s_para} satisfies conditions \eqnok{alpha_theta}-\eqnok{eta_tau_theta}
and \eqref{theta_eta_ds}. Moreover, we have
	\begin{align*}
	\tsum_{k=1}^N\tsum_{t=1}^{T_k}\tfrac{2mM^2\theta_k}{T_k(T_k+1)}\tfrac{t} {(t+1)\mu/\mathcal{C}+(t-1)\eta_k}
	&=\tsum_{k=1}^N\tfrac{2mM^2\theta_k\mathcal{C}}{T_k(T_k+1)\mu} \tsum_{t=1}^{T_k}\tfrac{2t}{2(t+1)+(t-1)k}\\
	&\le \tsum_{k=1}^N\tfrac{2mM^2\theta_k\mathcal{C}}{T_k(T_k+1)\mu}\left(\tfrac{1}{2}+\tsum_{t=2}^{T_k}\tfrac{2t}{(t-1)(k+1)}\right)\\
	&\le \tsum_{k=1}^N\tfrac{mM^2\mathcal{C}(k+1)}{T_k(T_k+1)\mu}+\tsum_{k=1}^N\tfrac{8mM^2\mathcal{C}(T_k-1)}{T_k(T_k+1)\mu}
	\le \tfrac{2\mu\tilde D}{\mathcal{C}}.
	\end{align*}
	Therefore, by plugging in these values to \eqnok{eqn:Q02}, we have
\begin{align}\label{eqn:qdeters}
    Q(\hat{\zb}^N;\xb^*,\yb)
    &\le \tfrac{2}{N(N+3)}
    \left[\tfrac{\mu}{\mathcal{C}} \mathbf{V}(\xb^0,\xb^*)+\tfrac{2\|\Lb\|^2\mathcal{C}}{\mu}\|\yb^0\|^2 + \tfrac{2\mu\tilde D}{\mathcal{C}}\right]
    +\tfrac{2}{N(N+3)} \la \hat\vb,\yb\ra.
\end{align}
Furthermore, from \eqref{eqn:xNbnd}, we have for $N\ge2$
\begin{align}\label{eqn:xbnd}
\|\hat\xb^N-\xb^{N-1}\|^2 &\le \tfrac{8\mathcal{C}}{\mu(N+1)(N-1)}\left[\tfrac{\mu}{\mathcal{C}} \mathbf{V}(\xb^0,\xb^*) + \tfrac{2\|\Lb\|^2\mathcal{C}}{\mu}\|\yb^0 - \yb^*\|^2+\tfrac{2\mu\tilde D}{\mathcal{C}}\right],\\\nn
\|\yb^*-\yb^N\|^2 &\le
\tfrac{N\mu}{(N-1)\|\Lb\|^2\mathcal{C}}\left[\tfrac{\mu}{\mathcal{C}} \mathbf{V}(\xb^0,\xb^*) + \tfrac{2\|\Lb\|^2\mathcal{C}}{\mu}\|\yb^0 - \yb^*\|^2+\tfrac{2\mu\tilde D}{\mathcal{C}}\right].
\end{align}
Let $\vb^N := \tfrac{2}{N(N+3)}\hat\vb$, then by using \eqref{eqn:xbnd}, we have for $N\ge2$
\begin{align*}
\|\vb^N\| \le &~\tfrac{2}{N(N+3)} \left[(N+1)\|\Lb\|\|\hat\xb^N-\xb^{N-1}\|+\tfrac{4\|\Lb\|^2\mathcal{C}}{\mu}\|\yb^N-\yb^*\|+\tfrac{4\|\Lb\|^2\mathcal{C}}{\mu}\|\yb^*-\yb^0\|\right]\\\nn
\le &~ \tfrac{8\|\Lb\|}{N(N+3)} \left[3\sqrt{2\tilde D+ \mathbf{V}(\xb^0,\xb^*) + \tfrac{2\|\Lb\|^2\mathcal{C}^2}{\mu^2}\|\yb^0 - \yb^*\|^2}+\tfrac{\|\Lb\|\mathcal{C}}{\mu}\|\yb^*-\yb^0\|\right]\\\nn
\le &~ \tfrac{8\|\Lb\|}{N(N+3)} \left[3\sqrt{2\tilde D+ \mathbf{V}(\xb^0,\xb^*)}+\tfrac{7\|\Lb\|\mathcal{C}}{\mu}\|\yb^*-\yb^0\|\right].
\end{align*}
From \eqref{eqn:qdeters}, we further have
\begin{align*}
    g(\hat\vb^N,\hat{\zb}^N)
    &\le \tfrac{2}{N(N+3)}
    \left[\tfrac{\mu}{\mathcal{C}} \mathbf{V}(\xb^0,\xb^*)+\tfrac{2\|\Lb\|^2\mathcal{C}}{\mu}\|\yb^0\|^2 + \tfrac{2\mu\tilde D}{\mathcal{C}}\right].
\end{align*}
Applying Proposition \ref{prop:approx} to the above two inequalities, the results in \eqref{eqn:deterobjs} and \eqref{eqn:deterfeass} follow immediately.
\end{proof}

			We now make some remarks about the results obtained in Theorem~\ref{deter_thm_s}. Firstly,
			similar to the general convex case, the best choice for $\tilde D$ (cf. \eqref{deter_s_para}) would be $\mathbf{V}(\xb^0,\xb^*)$ so that the first and the third terms in \eqref{eqn:qdeters} are about the same order. If there exists an estimate ${\cal D}_{X^m}>0$ satisfying \eqref{V_bndness}, we can set $\tilde D={\cal D}_{X^m}^2$.
		
			Secondly, the complexity of the DCS method for solving strongly convex problems follows from \eqref{eqn:deterobjs} and \eqref{eqn:deterfeass}. For simplicity, let us assume that $X$ is bounded, $\tilde D={\cal D}_{X^m}^2$ and $\yb^0=\0b$. We can see that the total number of inter-node communication rounds and intra-node subgradient evaluations performed by each agent for finding an $(\epsilon, \delta)$-solution of \eqref{eqn:prob} can be bounded by
		\beq\label{complexity_sc}
		{\cal O}\left\{\max\left(\sqrt{\tfrac{\mu{\cal D}_{X^m}^2}{\mathcal{C}\epsilon}},\sqrt{\tfrac{\|\Lb\|}{\delta}\left({\cal D}_{X^m}+\tfrac{\mathcal{C}\|\Lb\|\|\yb^*\|}{\mu}\right)}\right)\right\} \ \ \mbox{and} \ \
		{\cal O}\left\{\tfrac{mM^2\mathcal{C}}{\mu}\max\left(\tfrac{1}{\epsilon}, \tfrac{\|\Lb\|\mathcal{C}}{\mu\delta}\left(\tfrac{1}{{\cal D}_{X^m}}+\tfrac{\mathcal{C}\|\Lb\|\|\yb^*\|}{{\cal D}_{X^m}^2\mu}\right)\right)\right\},
		\eeq
respectively. In particular, if $\epsilon$ and $\delta$ satisfy
\beq\label{tolerance_delta_s}
\tfrac{\epsilon}{\delta}\le \tfrac{\mu^2{\cal D}_{X^m}^2}{\|\Lb\|\mathcal{C}(\mu{\cal D}_{X^m}+\mathcal{C}\|\Lb\|\|\yb^*\|)},
\eeq
then the complexity bounds in \eqref{complexity_sc}, respectively, reduce to
\beq\label{simp_complexity_sc}
{\cal O}\left\{\sqrt{\tfrac{\mu{\cal D}_{X^m}^2}{\mathcal{C}\epsilon}}\right\} \ \
\mbox{and} \ \
{\cal O}\left\{\tfrac{mM^2\mathcal{C}}{\mu\epsilon}\right\}.
\eeq

Thirdly, we compare DCS method with the centralized mirror descent method~\cite{nemyud:83} applied to \eqnok{eqn:orgprob}.
In the worst case, the Lipschitz constant and strongly convex modulus of $f$ in \eqnok{eqn:orgprob} can be bounded by
$M_f \le m M$, and $\mu_f\ge m\mu$, respectively, and each iteration of the method will incur $m$ subgradient evaluations. Therefore, the total number of subgradient evaluations performed by the mirror descent method for finding an $\epsilon$-solution of \eqref{eqn:orgprob}, i.e., a point $\bar x\in X$ such that $f(\bar x)-f^*\le \epsilon$, can be bounded by
\beq\label{complexity_s_MD}
{\cal O}\left\{\tfrac{m^2M^2\mathcal{C}}{\mu\epsilon}\right\}.
\eeq
Observed that the second bound in \eqref{simp_complexity_sc}
	states only the number of subgradient evaluations for each agent in the DCS method,
	we conclude that the total number of subgradient evaluations performed by DCS is comparable to
	the classic mirror descent method as long as \eqnok{tolerance_delta_s} holds and hence not improvable in general
	for the nonsmooth strongly convex case.


\setcounter{equation}{0}
\section{Stochastic Decentralized Communication Sliding}
\label{sec:stochastic}

In this section, we consider the stochastic case
where only the noisy subgradient information of the functions $f_i$,  $i = 1, \ldots, m$,
is available or easier to compute.
This situation happens when the function $f_i$'s are given either in the form of expectation
or as the summation of lots of components.
This setting has attracted considerable interest in recent decades for its applications in a
broad spectrum of disciplines including machine learning, signal processing, and operations research.
We present a stochastic communication sliding method, namely the stochastic decentralized communication sliding (SDCS) method, and show that the similar complexity bounds as in Section \ref{sec:deterministic}
can still be obtained in expectation or with high probability.

\subsection{The SDCS Algorithm}
The first-order information of the function $f_i$, $i = 1, \ldots, m$,
can be accessed by a stochastic oracle (SO), which,  given a point $u^t \in X$,
outputs a vector $G_i(u^t,\xi_i^t)$ such that
\begin{align}\label{assume:unbiased}
\bbe[G_i(u^t,\xi_i^t)]=f_i'(u^t) \in \partial f_i(u^t),
\end{align}
\begin{align}\label{assume:sm_bounded}
\bbe[\|G_i(u^t,\xi_i^t)-f_i'(u^t)\|_*^2]\le \sigma^2,
\end{align}
where $\xi_i^t$ is a random vector which models a source of uncertainty and is independent of the search point $u^t$, and the distribution $\mathbb{P}(\xi_i)$ is not known in advance.
We call $G_i(u^t,\xi_i^t)$ a \textit{stochastic subgradient} of $f_i$ at $u^t$.

The SDCS method can be obtained by simply replacing the exact subgradients
in the CS procedure of Algorithm~\ref{alg:DCS} with the stochastic subgradients obtained from SO.
This difference is described in Algorithm~\ref{alg:SDCS}.

\begin{algorithm}
\caption{SDCS}
\label{alg:SDCS}
\begin{algorithmic}
\STATE  The projection step \eqref{eqn:inner0}-\eqref{eqn:inner1} in the CS procedure
of Algorithm~\ref{alg:DCS} is replaced by
\begin{align}
h^{t-1} = &~  H(u^{t-1},\xi^{t-1}),\label{eqn:inner0-s}\\
u^t = &~ \argmin_{u \in U}\left[\la w + h^{t-1}, u \ra +\eta V(x,u) + \eta\b_tV(u^{t-1},u)\right],\label{eqn:inner1-s}
\end{align}
\STATE where $H(u^{t-1},\xi^{t-1})$ is a stochastic subgradient of $\phi$ at $u^{t-1}$.
\end{algorithmic}
\end{algorithm}

We add a few remarks about the SDCS algorithm.
Firstly, as in DCS,
no additional communications of the dual variables are required
when the subgradient projection \eqref{eqn:inner1-s} is performed for $T_k$ times in the inner loop.
This is because the same $w_i^k$ has been used throughout the $T_k$ iterations of the Stochastic CS procedure.
Secondly, the problem will reduce to the deterministic case if there is no stochastic noise
associated with the SO, i.e., when $\sigma = 0$ in \eqref{assume:sm_bounded}.
Therefore, in Section \ref{sec:conv}, we investigate the convergence analysis for the stochastic case first and then simplify the analysis for the deterministic case by setting $\sigma = 0$.

\subsection{Convergence of SDCS on General Convex Functions\label{sec:sgc}}

We now establish the main convergence properties of the SDCS algorithm.
More specifically, we provide in Lemma \ref{lem:main_stoch}
an estimate on the gap function defined in \eqref{eqn:gap} together with
stepsize policies which work for the general convex case with $\mu = 0$ (cf. \eqref{eqn:nonsmooth}).
The proof of this lemma can be found in Section \ref{sec:conv}.
\begin{lemma}\label{lem:main_stoch}
Let the iterates $(\hat{\xb}^k,\yb^k)$ for $k = 1, \ldots, N$ be generated by Algorithm~\ref{alg:SDCS} and $\hat{\zb}^N$ be defined as $\hat{\zb}^N := \left(\tsum_{k=1}^N \t_k\right)^{-1}\tsum_{k=1}^N \t_k (\hat{\xb}^k,\yb^k)$.
Assume the objective $f_i$, $i = 1, \ldots, m$, are general nonsmooth convex functions, i.e., $\mu = 0$ and $M >0$.
Let the parameters $\{\a_k\}$, $\{\t_k\}$, $\{\eta_k\}$, $\{\tau_k\}$ and $\{T_k\}$ in Algorithm~\ref{alg:SDCS} satisfy
\eqref{alpha_theta}-\eqref{eta_tau_theta} and \eqref{theta_eta_d}.
Let the parameters $\{\lambda_t\}$ and $\{\beta_t\}$ in the CS procedure of Algorithm~\ref{alg:SDCS} be set as \eqref{eqn:csparam}.
Then, for all $\zb \in X^m\times \bbr^{md}$,
\begin{align}\label{Q_pert_bnd}
Q(\hat{\zb}^N;\zb) \le&~\left(\tsum_{k=1}^N\theta_k\right)^{-1}\Bigg\{\tfrac{(T_1+1)(T_1+2)\theta_1\eta_1}{T_1(T_1+3)}\mathbf{V}(\xb^0,\xb)+\tfrac{\theta_1\tau_1}{2}\|\yb^0\|^2+\la\hat \vb,\yb\ra\\\nn
&~~~+ \tsum_{k=1}^N\tsum_{t=1}^{T_k}\tsum_{i=1}^m\tfrac{2\theta_k}{T_k(T_k+3)}\left[(t+1)\langle\delta_i^{t-1,k}, x_i-u_i^{t-1}\rangle
+\tfrac{4(M^2+\|\delta_i^{t-1,k}\|_*^2)}{\eta_k}\right]\Bigg\},
\end{align}
where
$\hat \vb := \theta_N \Lb (\hat \xb^N -\xb^{N-1}) + \theta_1\tau_1 (\yb^N-\yb^0)$
and $Q$ is defined in \eqref{eqn:gap}. Furthermore, for any saddle point $(\xb^*,\yb^*)$ of \eqref{eqn:saddle}, we have
\begin{align}\label{xNs_bnd}
\tfrac{\theta_N}{2}\left(1-\tfrac{\|\Lb\|^2}{\eta_N\tau_N}\right)&\max\{\eta_N\|\hat\xb^N-\xb^{N-1}\|^2,\tau_N\|\yb^*-\yb^N\|^2\}\\\nn
&~\le \tfrac{(T_1+1)(T_1+2)\theta_1\eta_1}{T_1(T_1+3)}\mathbf{V}(\xb^0,\xb^*)+\tfrac{\theta_1\tau_1}{2}\|\yb^*-\yb^0\|^2\\\nn
&~~~+ \tsum_{k=1}^N\tsum_{t=1}^{T_k}\tsum_{i=1}^m\tfrac{2\theta_k}{T_k(T_k+3)}\left[(t+1)\langle\delta_i^{t-1,k}, x_i^*-u_i^{t-1}\rangle
+\tfrac{4(M^2+\|\delta_i^{t-1,k}\|_*^2)}{\eta_k}\right].
\end{align}
\end{lemma}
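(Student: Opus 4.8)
The plan is to mirror the argument for the exact decentralized primal-dual method (Lemma~\ref{lem:main_exact}), but now accounting for the fact that the primal subproblem $\Phi^k(\xb)$ in \eqref{eqn:defPhi_k} is solved only approximately by the (stochastic) CS procedure. The argument naturally splits into an inner-loop analysis and an outer-loop analysis, which are then composed.

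First I would analyze a single call to the stochastic CS procedure. Writing $\delta_i^{t-1,k} := h_i^{t-1} - f_i'(u_i^{t-1})$ for the stochastic error at inner iteration $t$, I would apply the standard three-point / prox-inequality to the update \eqref{eqn:inner1-s}: for any $u \in U$,
\begin{align*}
\la w + h^{t-1}, u^t - u\ra + \eta V(x,u^t) + \eta\b_t V(u^{t-1},u^t) \le \eta V(x,u) + \eta\b_t V(u^{t-1},u) - \eta(1+\b_t) V(u^t,u).
\end{align*}
Then I would bound $\la h^{t-1}, u^t - u\ra$ by splitting $h^{t-1} = f_i'(u^{t-1}) + \delta_i^{t-1,k}$, using convexity of $f_i$ to produce $f_i(u^{t-1}) - f_i(u)$ plus the term $\la f_i'(u^{t-1}), u^{t-1}-u^t\ra$, and controlling the latter via Young's inequality against the $\eta\b_t V(u^{t-1},u^t) \ge \tfrac{\eta\b_t}{2}\|u^{t-1}-u^t\|^2$ term; this uses the Lipschitz-type bound $\|f_i'\|_* \le M$ from \eqref{eqn:nonsmooth} (with $\mu=0$) and generates the $M^2/\eta_k$-type residual and the $\|\delta_i^{t-1,k}\|_*^2$ residual. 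The cross term $\la \delta_i^{t-1,k}, u^{t-1}-u\ra$ splits as $\la \delta_i^{t-1,k}, x_i - u_i^{t-1}\ra$ (retained, as it telescopes/vanishes in expectation against the fixed reference point) plus a $\la \delta_i^{t-1,k}, u^{t-1}-u_i^{t}\ra$ piece again absorbed by Young. Multiplying by $\lambda_t = t+1$, summing $t=1,\dots,T_k$ with $\beta_t = t/2$ chosen exactly so that the weighted prox terms telescope, and dividing by $\tsum_t \lambda_t = T_k(T_k+3)/2$, I obtain — using convexity to pass from the $u^t$ to $\hat u^{T_k} = \hat x_i^k$ — an upper bound on $f_i(\hat x_i^k) + \la w_i^k, \hat x_i^k\ra - [f_i(u) + \la w_i^k, u\ra]$ of the form $\tfrac{\eta_k(T_k+1)(T_k+2)}{T_k(T_k+3)}V_i(x_i^{k-1}, u) - \eta_k V_i(\hat x_i^k, u)$ plus the two error-sum terms appearing in \eqref{Q_pert_bnd}. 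Summing over $i$ gives the vector version with $\mathbf{V}$.

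Next I would feed this into the outer-loop recursion. Following the Chambolle–Pock-style analysis, I would combine the dual step \eqref{eqn:algo3-cs} (an exact prox step, handled as in Lemma~\ref{lem:main_exact}, giving the $\tfrac{\tau_k}{2}\|\yb^{k-1}\|^2$-telescoping and the bilinear coupling terms $\la \Lb\tilde\xb^k,\yb^k-\yb\ra$) with the inexact primal bound just derived, using $\tilde x_i^k = \a_k(\hat x_i^{k-1}-x_i^{k-2}) + x_i^{k-1}$ to reproduce the acceleration/extrapolation identity. The bilinear terms $\la \Lb\xb^k - \Lb\tilde\xb^k, \yb^k-\yb\ra$ get telescoped with a residual controlled by $\a_k\|\Lb\|^2 \le \eta_{k-1}\tau_k$ (condition \eqref{eta_tau_L_k}) against the strong convexity $\eta_k V_i$; the key point is that the leftover $-\eta_k\mathbf{V}(\hat\xb^k,\cdot)$ from the inner analysis is exactly what is needed to absorb the next iteration's coupling residual. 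Weighting by $\theta_k$, using $\a_k\theta_k = \theta_{k-1}$ \eqref{alpha_theta} and the monotonicity conditions \eqref{theta_eta_d}, \eqref{theta_tau}, \eqref{eta_tau}, \eqref{eta_tau_theta} to make everything telescope, and summing $k=1,\dots,N$, yields \eqref{Q_pert_bnd} with the perturbation vector $\hat\vb$ collecting exactly the boundary terms $\theta_N\Lb(\hat\xb^N-\xb^{N-1})$ and $\theta_1\tau_1(\yb^N-\yb^0)$. For \eqref{xNs_bnd}, I would set $\zb = \zb^* = (\xb^*,\yb^*)$ in \eqref{Q_pert_bnd}, use $Q(\hat\zb^N;\zb^*) \ge 0$ to move the negative of the right-hand-side boundary terms to the left, and identify the quadratic forms $\tfrac{\theta_N\eta_N}{2}\|\hat\xb^N-\xb^{N-1}\|^2$ and $\tfrac{\theta_N\tau_N}{2}\|\yb^*-\yb^N\|^2$ that survive the telescoping, with the $(1 - \|\Lb\|^2/(\eta_N\tau_N))$ factor coming from the last coupling residual bounded via \eqref{eta_tau_theta} and Cauchy–Schwarz.

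The main obstacle I anticipate is bookkeeping the coupling between the two loops: one must verify that the specific weights $\lambda_t = t+1$, $\beta_t = t/2$ produce a clean telescoping inside the inner loop \emph{and} that the resulting residual coefficient $\tfrac{(T_k+1)(T_k+2)\eta_k}{T_k(T_k+3)}$ interacts correctly with the outer monotonicity hypothesis \eqref{theta_eta_d} (rather than the simpler \eqref{theta_eta}), and that the leftover $-\eta_k \mathbf{V}(\hat\xb^k,\cdot)$ term is precisely matched — not merely dominated — by the next outer iteration. Keeping the stochastic error terms $\delta_i^{t-1,k}$ symbolic (not yet taking expectations, so that the lemma is a pathwise/deterministic statement) while ensuring they appear in exactly the grouped form shown in \eqref{Q_pert_bnd} is the other delicate point; the deterministic Lemma~\ref{lem:main_deter} should then follow by setting all $\delta_i^{t-1,k} = 0$.
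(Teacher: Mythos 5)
Your proposal follows essentially the same route as the paper: a three-point prox analysis of the inner CS iteration with the stochastic error split off and absorbed via Young's inequality (this is the paper's Proposition~\ref{prop:inner}), followed by the Chambolle--Pock-style weighted telescoping of the outer loop reusing the $\Delta_k$ computation from Lemma~\ref{lem:main_exact} with \eqref{theta_eta_d} in place of \eqref{theta_eta}, and finally $Q(\hat\zb^N;\zb^*)\ge 0$ to extract \eqref{xNs_bnd}. The details you flag as delicate (the coefficient $\tfrac{(T_k+1)(T_k+2)\eta_k}{T_k(T_k+3)}$, the role of $-\eta_k\mathbf{V}(\xb^{k-1},\hat\xb^k)$ in absorbing the coupling residual, and keeping $\delta_i^{t-1,k}$ symbolic) are exactly the ones the paper handles, so the sketch is sound.
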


In the following theorem, we provide a specific selection of $\{\alpha_k\}$, $\{\theta_k\}$, $\{\eta_k\}$, $\{\tau_k\}$ and $\{T_k\}$  satisfying \eqref{alpha_theta}-\eqref{eta_tau_theta} and \eqref{theta_eta_d}. Also, by using Lemma \ref{lem:main_stoch} and Proposition \ref{prop:approx}, we establish the complexity of the SDCS method for computing an $(\epsilon,\delta)$-solution of problem \eqref{eqn:prob} in expectation when the objective functions are general convex.

\begin{theorem}\label{stoch_thm_ns}
	Let $\xb^*$ be an optimal solution of \eqnok{eqn:prob}, the parameters $\{\lambda_t\}$ and $\{\beta_t\}$ in the CS procedure of Algorithm~\ref{alg:SDCS} be set as \eqref{eqn:csparam}, and suppose that  $\{\alpha_k\}$, $\{\theta_k\}$,
$\{\eta_k\}$, $\{\tau_k\}$ and $\{T_k\}$ are set to
	\beq\label{stoch_ns_para}
    \alpha_k=\theta_k = 1,~\eta_k=2\|\Lb\|,~\tau_k=\|\Lb\|,\mbox{ and }T_k=\left\lceil \tfrac{m (M^2+\sigma^2)N}{\|\Lb\|^2\tilde D}\right\rceil, \quad \forall k=1, \ldots, N,
	\eeq
	for some $\tilde D>0$. Then, under Assumptions \eqref{assume:unbiased} and \eqref{assume:sm_bounded}, we have for any $N \ge 1$
	\begin{align}\label{bnd_func}
	\bbe[F(\hat \xb^k)-F(\xb^*)]&\le \tfrac{\|\Lb\|}{N}\left[3\mathbf{V}(\xb^0,\xb^*)+\tfrac{1}{2}\|\yb^0\|^2+4\tilde D\right],
	\end{align}
	and
	\begin{align}\label{bnd_infeas}
	\bbe[\|\Lb\hat \xb^N\|]&\le \tfrac{\|\Lb\|}{N}\left[3\sqrt{6\mathbf{V}(\xb^0,\xb^*)+8\tilde D}+4\|\yb^*-\yb^0\|\right].
	\end{align}
	where $\hat \xb^N=\tfrac{1}{N}\tsum_{k=1}^N\hat \xb^k$.
\end{theorem}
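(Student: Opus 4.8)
The plan is to follow the proof of Theorem~\ref{deter_thm_ns} essentially line for line, inserting expectations to dispose of the stochastic error terms $\delta_i^{t-1,k}$ produced by Lemma~\ref{lem:main_stoch}. First I would check that the choice \eqref{stoch_ns_para} satisfies conditions \eqref{alpha_theta}--\eqref{eta_tau_theta} and \eqref{theta_eta_d}: since $\alpha_k=\theta_k=1$, $\eta_k=2\|\Lb\|$, $\tau_k=\|\Lb\|$ are exactly the values used in \eqref{deter_ns_para} and $T_k$ does not enter those relations, this is the identical verification already carried out for Theorem~\ref{deter_thm_ns}, and in particular $\tfrac{(T_1+1)(T_1+2)}{T_1(T_1+3)}\le\tfrac32$. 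Substituting these values and $\xb=\xb^*$ into \eqref{Q_pert_bnd} (so that $\hat\vb=\Lb(\hat\xb^N-\xb^{N-1})+\|\Lb\|(\yb^N-\yb^0)$) yields, pathwise and for every $\yb\in\bbr^{md}$, a bound $Q(\hat\zb^N;\xb^*,\yb)\le\tfrac1N\big[3\|\Lb\|\mathbf{V}(\xb^0,\xb^*)+\tfrac{\|\Lb\|}{2}\|\yb^0\|^2+\la\hat\vb,\yb\ra+\mathcal R\big]$, where $\mathcal R:=\tsum_{k=1}^N\tsum_{t=1}^{T_k}\tsum_{i=1}^m\tfrac{2}{T_k(T_k+3)}\big((t+1)\la\delta_i^{t-1,k},x_i^*-u_i^{t-1}\ra+\tfrac{2(M^2+\|\delta_i^{t-1,k}\|_*^2)}{\|\Lb\|}\big)$ collects all inner-loop error contributions; the very same quantity $\mathcal R$ appears in the right-hand side of \eqref{xNs_bnd} under these parameters.

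The core estimate is therefore $\bbe[\mathcal R]$. Conditioning on the $\sigma$-field generated by all sampling that precedes the $t$-th stochastic-oracle call of the $k$-th outer iteration, the search point $u_i^{t-1}$ is measurable while $x_i^*$ is deterministic, so the unbiasedness \eqref{assume:unbiased} forces $\bbe[\la\delta_i^{t-1,k},x_i^*-u_i^{t-1}\ra]=0$ (a mean-zero martingale-difference term), while \eqref{assume:sm_bounded} gives $\bbe[\|\delta_i^{t-1,k}\|_*^2]\le\sigma^2$. Using $\tsum_{t=1}^{T_k}\tsum_{i=1}^m\tfrac{2}{T_k(T_k+3)}=\tfrac{2m}{T_k+3}$ together with the lower bound $T_k\ge\tfrac{m(M^2+\sigma^2)N}{\|\Lb\|^2\tilde D}$ built into \eqref{stoch_ns_para}, the surviving terms sum to $\bbe[\mathcal R]\le 4\|\Lb\|\tilde D$ — the stochastic counterpart of the $2\|\Lb\|\tilde D$ bound in Theorem~\ref{deter_thm_ns}. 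Consequently $\bbe\big[Q(\hat\zb^N;\xb^*,\yb)-\tfrac1N\la\hat\vb,\yb\ra\big]\le\tfrac{\|\Lb\|}{N}\big[3\mathbf{V}(\xb^0,\xb^*)+\tfrac12\|\yb^0\|^2+4\tilde D\big]$ for every $\yb$.

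To obtain \eqref{bnd_func} I would set $\hat\vb^N:=\tfrac1N\hat\vb$. Since the pathwise bound on $Q(\hat\zb^N;\xb^*,\yb)$ is affine in $\yb$ with coefficient $\hat\vb^N$, the $\yb$-linear parts of $Q(\hat\zb^N;\xb^*,\yb)-\la\hat\vb^N,\yb\ra$ cancel, so $g(\hat\vb^N,\hat\zb^N)\le\tfrac1N\big[3\|\Lb\|\mathbf{V}(\xb^0,\xb^*)+\tfrac{\|\Lb\|}{2}\|\yb^0\|^2+\mathcal R\big]$ pathwise, which is a.s.\ finite; Proposition~\ref{prop:approx} then identifies $\hat\vb^N=\Lb\hat\xb^N$ almost surely and yields $F(\hat\xb^N)-F(\xb^*)\le g(\hat\vb^N,\hat\zb^N)$ pathwise, so taking expectations gives \eqref{bnd_func}. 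For \eqref{bnd_infeas} I would bound $\|\hat\vb^N\|$ directly by the triangle inequality, $\|\hat\vb^N\|\le\tfrac{\|\Lb\|}{N}\big(\|\hat\xb^N-\xb^{N-1}\|+\|\yb^N-\yb^*\|+\|\yb^*-\yb^0\|\big)$, and control the first two norms through \eqref{xNs_bnd}: with the present parameters $1-\|\Lb\|^2/(\eta_N\tau_N)=\tfrac12$, so \eqref{xNs_bnd} forces $\|\hat\xb^N-\xb^{N-1}\|^2$ and $\|\yb^*-\yb^N\|^2$ to be, pathwise, at most a fixed multiple of its right-hand side divided by $\|\Lb\|$, and that right-hand side has expectation at most $\|\Lb\|\big[3\mathbf{V}(\xb^0,\xb^*)+\tfrac12\|\yb^*-\yb^0\|^2+4\tilde D\big]$ after the same treatment of $\mathcal R$. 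Passing the expectation inside the square root via Jensen's inequality (concavity of $\sqrt{\cdot}$) and splitting off the $\|\yb^*-\yb^0\|$ term with $\sqrt{a+b}\le\sqrt a+\sqrt b$ then delivers \eqref{bnd_infeas}, where again $\hat\vb^N=\Lb\hat\xb^N$ a.s.\ identifies $\|\hat\vb^N\|$ with $\|\Lb\hat\xb^N\|$.

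The only genuine difficulty beyond the deterministic proof is the stochastic bookkeeping: one must set up the filtration so that each $\la\delta_i^{t-1,k},x_i^*-u_i^{t-1}\ra$ is a mean-zero martingale difference, and then be careful that expectation commutes with neither the $\sup_{\yb}$ defining $g$ nor with $\sqrt{\cdot}$ — both are handled by first deriving a pathwise inequality and only afterwards taking expectations, invoking Jensen for the square root. Everything else is the substitution of parameters already validated for Theorem~\ref{deter_thm_ns}.
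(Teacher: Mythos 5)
Your proposal is correct and follows essentially the same route as the paper's proof: verify the parameter conditions, substitute into \eqref{Q_pert_bnd}/\eqref{xNs_bnd} with $\xb=\xb^*$, kill the martingale-difference terms and bound $\bbe[\|\delta_i^{t-1,k}\|_*^2]$ by $\sigma^2$ to get the $4\|\Lb\|\tilde D$ error contribution, then apply Proposition~\ref{prop:approx} together with the triangle and Jensen inequalities for the feasibility residual. The only (welcome) difference is that you make explicit the filtration argument behind $\bbe[\la\delta_i^{t-1,k},x_i^*-u_i^{t-1}\ra]=0$ and the pathwise-before-expectation ordering, which the paper leaves implicit.
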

\begin{proof}
	It is easy to check that \eqnok{stoch_ns_para} satisfies conditions \eqnok{alpha_theta}-\eqnok{eta_tau_theta} and \eqref{theta_eta_d}. Moreover, by \eqref{eqn:gapp}, we can obtain
	\begin{align}\label{g_bnd}
	g(\hat \vb^N,\hat \zb^N)
	=&~\max_{\yb}Q(\hat \zb^N;\xb^*,\yb)-\left(\tsum_{k=1}^N\theta_k\right)^{-1}\la \hat \vb,\yb\ra \\\nn
	\le&~\left(\tsum_{k=1}^N\theta_k\right)^{-1}\Bigg\{\tfrac{(T_1+1)(T_1+2)\theta_1\eta_1}{T_1(T_1+3)}\mathbf{V}(\xb^0,\xb^*)+\tfrac{\theta_1\tau_1}{2}\|\yb^0\|^2\\\nn
	&~~~+ \tsum_{k=1}^N\tsum_{t=1}^{T_k}\tsum_{i=1}^m\tfrac{2\theta_k}{T_k(T_k+3)}\left[(t+1)\langle\delta_i^{t-1,k}, x_i^*-u_i^{t-1}\rangle
	+\tfrac{4(M^2+\|\delta_i^{t-1,k}\|_*^2)}{\eta_k}\right]\Bigg\},
	\end{align}
	where $\vb^N=\left(\tsum_{k=1}^N\theta_k\right)^{-1}\hat \vb$.
	Particularly,
	from Assumption~\eqref{assume:unbiased} and \eqref{assume:sm_bounded},
	\[
	\bbe[\delta_i^{t-1,k}]=0,\ \
	\bbe[\|\delta_i^{t-1,k}\|_*^2]\le \sigma^2,\ \forall i\in \{1,\dots,m\},\  t\ge1,\ k\ge1,
	\]
	and from \eqref{stoch_ns_para}
	\[
	\tfrac{(T_1+1)(T_1+2)}{T_1(T_1+3)}=1+\tfrac{2}{T_1^2+3T_1}\le \tfrac{3}{2}.
	\]
	Therefore, by taking expectation over both sides of \eqref{g_bnd} and plugging in these values into \eqref{g_bnd}, we have
	\begin{align}\label{g_bnd_exp}
	\bbe[g(\hat \vb^N,\hat \zb^N)]
	\le&~\left(\tsum_{k=1}^N\theta_k\right)^{-1}\Bigg\{\tfrac{(T_1+1)(T_1+2)\theta_1\eta_1}{T_1(T_1+3)}\mathbf{V}(\xb^0,\xb)+\tfrac{\theta_1\tau_1}{2}\|\yb^0\|^2+\tsum_{k=1}^N\tfrac{8m(M^2+\sigma^2)\theta_k}{(T_k+3)\eta_k}
	\Bigg\}\\\nn
	\le&~ ~\tfrac{\|\Lb\|}{N}\left[3\mathbf{V}(\xb^0,\xb^*)+\tfrac{1}{2}\|\yb^0\|^2+4\tilde D\right],
	\end{align}
	with
	\[
	\bbe[\|\hat \vb^N\|]=\tfrac{1}{N}\bbe[\|\hat \vb\|]\le \tfrac{\|\Lb\|}{N}\bbe\left[\|\hat \xb^N-\xb^{N-1}\|+\|\yb^N-\yb^*\|+\|\yb^*-\yb^0\|\right].
	\]
	Note that from \eqref{xNs_bnd} and Jensen's inequality, we have
	\begin{align*}
	(\bbe[\|\hat \xb^N-\xb^{N-1}])^2&\le \bbe[\|\hat \xb^N-\xb^{N-1}\|^2]\le 6\mathbf{V}(\xb^0,\xb^*)+\|\yb^*-\yb^0\|+8\tilde D,\\
    (\bbe[\|\yb^*-\yb^N\|])^2&\le \bbe[\|\yb^*-\yb^N\|^2]\le 12\mathbf{V}(\xb^0,\xb^*)+2\|\yb^*-\yb^0\|+16\tilde D.
	\end{align*}	
	Hence,
	\begin{align*}
	\bbe[\|\hat \vb^N\|]\le \tfrac{\|\Lb\|}{N}\left[3\sqrt{6\mathbf{V}(\xb^0,\xb^*)+8\tilde D}+4\|\yb^*-\yb^0\|\right].
	\end{align*}
	Applying Proposition~\ref{prop:approx} to the above inequality and \eqref{g_bnd_exp}, the results in \eqref{bnd_func} and \eqref{bnd_infeas} follow immediately.
\end{proof}

	We now make some observations about the results obtained in Theorem~\ref{stoch_thm_ns}. Firstly, one can choose any $\tilde D>0$ (e.g., $\tilde D=1$) in \eqref{stoch_ns_para}, however, the best selection of $\tilde D$ would be $\mathbf{V}(\xb^0,\xb^*)$ so that the first and third terms in \eqref{g_bnd_exp} are about the same order. In practice, if there exists an estimate ${\cal D}_{X^m}>0$ satisfying \eqref{V_bndness},
	we can set $\tilde D={\cal D}_{X^m}^2$.

	Secondly, the complexity of SDCS method immediately follows from \eqref{bnd_func} and \eqref{bnd_infeas}. Under the above assumption, with $\tilde D={\cal D}^2_{X^m}$ and $\yb^0=\0b$, we can see that the total number of inter-node communication rounds and intra-node subgradient evaluations required by each agent for finding a stochastic $(\epsilon,\delta)$-solution of \eqref{eqn:prob} can be bounded by
	\beq\label{complexity_stoch_ns}
	 {\cal O}\left\{\|\Lb\|\max\left(\tfrac{{\cal D}_{X^m}^2}{\epsilon},\tfrac{{\cal D}_{X^m}+\|\yb^*\|}{\delta}\right)\right\}\ \
	\mbox{and} \ \
	{\cal O}\left\{m(M^2+\sigma^2)\max\left(\tfrac{{\cal D}_{X^m}^2}{\epsilon^2},\tfrac{{\cal D}_{X^m}^2+\|\yb^*\|^2}{{\cal D}_{X^m}^2\delta^2}\right)\right\},
	\eeq
	respectively.
	In particular, if $\epsilon$ and $\delta$ satisfy \eqref{tolerance_delta}, the above complexity bounds, respectively, reduce to
	\beq\label{simp_complexity_stoch_ns}
	{\cal O}\left\{\tfrac{\|\Lb\|{\cal D}_{X^m}^2}{\epsilon}\right\}\ \
	\mbox{and} \ \
	{\cal O}\left\{\tfrac{m(M^2+\sigma^2){\cal D}_{X^m}^2}{\epsilon^2}\right\}.
	\eeq
	In particular, we can show that the total number stochastic subgradients
	that SDCS requires is comparable to the mirror-descent stochastic approximation in \cite{NJLS09-1}.
	This implies that the sample complexity for decentralized stochastic
	optimization are still optimal (as the centralized one), even
	after we skip many communication rounds.

\subsection{Convergence of SDCS on Strongly Convex Functions\label{sec:ssc}}

We now provide in Lemma \ref{lem:main_stoch_sc}
an estimate on the gap function defined in \eqref{eqn:gap}
together with stepsize policies which work for the strongly convex case with $\mu > 0$ (cf. \eqref{eqn:nonsmooth}).
The proof of this lemma can be found in Section \ref{sec:conv}.

Note that throughout this subsection, we assume that the prox-functions $V_i(\cdot,\cdot)$, $i=1,\dots,m$, (cf. \eqref{eqn:def_V})
are growing quadratically with the quadratic growth constant $\mathcal{C}$, i.e., \eqref{eqn:proxquad} holds.

\begin{lemma}\label{lem:main_stoch_sc}
Let the iterates $(\hat{\xb}^k,\yb^k)$, $k = 1, \ldots, N$ be generated by Algorithm~\ref{alg:SDCS} and $\hat{\zb}^N$ be defined as $\hat{\zb}^N := \left(\tsum_{k=1}^N \t_k\right)^{-1}\tsum_{k=1}^N \t_k (\hat{\xb}^k,\yb^k)$.
Assume the objective $f_i$, $i = 1, \ldots, m$ are strongly convex functions, i.e., $\mu, M >0$.
Let the parameters $\{\a_k\}$, $\{\t_k\}$, $\{\eta_k\}$ and $\{\tau_k\}$ in Algorithm~\ref{alg:SDCS} satisfy
\eqref{alpha_theta}-\eqref{eta_tau_theta} and \eqref{theta_eta_ds}.
Let the parameters $\{\lambda_t\}$ and $\{\beta_t\}$ in the CS procedure of Algorithm~\ref{alg:SDCS} be set as \eqref{eqn:csparam_sc}.
Then, for all $\zb \in X^m\times \bbr^{md}$,
	\begin{align}\label{Q_expectation2}
	Q(\hat\zb^N;\zb)
	&~\le \left(\tsum_{k=1}^N\theta_k\right)^{-1}\Bigg\{\theta_1\eta_1\mathbf{V}(\xb^0,\xb)+\tfrac{\theta_1\tau_1}{2}\|\yb^0\|^2+\la\hat\vb,\yb\ra\\\nn &~~~+\tsum_{k=1}^N\tsum_{t=1}^{T_k}\tsum_{i=1}^m\tfrac{2\theta_k}{T_k(T_k+1)}\left[t\langle\delta_i^{t-1,k}, x_i-u_i^{t-1}\rangle
	+ \tfrac{2t(M^2+\|\delta_i^{t-1,k}\|_*^2)}{(t+1)\mu/\mathcal{C}+(t-1)\eta_k}\right]\Bigg\},
	\end{align}
where $\hat\vb := \theta_N \Lb (\hat \xb^N - \xb^{N-1}) + \theta_1\tau_1 (\yb^N - \yb^0)$
and $Q$ is defined in \eqref{eqn:gap}.
    Furthermore, for any saddle point $(\xb^*,\yb^*)$ of \eqref{eqn:saddle}, we have
    \begin{align}\label{eqn:xNbndstoch}
    \tfrac{\theta_N}{2}\left(1-\tfrac{\|\Lb\|^2}{\eta_N\tau_N }\right)&\max\{\eta_N \|\hat\xb^N-\xb^{N-1}\|^2,\tau_N\|\yb^*-\yb^N\|^2\}\\\nn
    &~\le \theta_1\eta_1\mathbf{V}(\xb^0, \xb^*)+\tfrac{\theta_1\tau_1}{2}\|\yb^*-\yb^0\|^2\\\nn
    &~~~+\tsum_{k=1}^N\tsum_{t=1}^{T_k}\tsum_{i=1}^m\tfrac{2\theta_k}{T_k(T_k+1)}\left[t\langle\delta_i^{t-1,k}, x_i^*-u_i^{t-1}\rangle
    + \tfrac{2t(M^2+\|\delta_i^{t-1,k}\|_*^2)}{(t+1)\mu/\mathcal{C}+(t-1)\eta_k}\right].
    \end{align}
\end{lemma}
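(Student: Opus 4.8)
The plan is to follow the same two-level scheme that underlies the deterministic Lemma~\ref{lem:main_deter_sc}, but to carry the stochastic perturbations $\delta_i^{t-1,k} := H(u^{t-1},\xi_i^{t-1}) - f_i'(u^{t-1})$ through every estimate instead of discarding them. At the inner level I would analyze the stochastic CS procedure as a weighted mirror-descent run on the subproblem $\Phi^k$ of \eqref{eqn:defPhi_k}, which splits agent-wise into $\Phi_i^k(u) := \langle w_i^k, u\rangle + f_i(u) + \eta_k V_i(x_i^{k-1},u)$; at the outer level I would feed the resulting one-outer-iteration estimate into the primal--dual recursion and telescope exactly as in the proof of Lemma~\ref{lem:main_exact}, with conditions \eqref{alpha_theta}--\eqref{eta_tau_theta} and \eqref{theta_eta_ds} supplying the telescoping relations and the quadratic growth \eqref{eqn:proxquad} allowing the $\mu$-strong convexity of $f_i$ to be expressed relative to the prox-functions $V_i$.

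For the inner loop, fix $k$ and an agent $i$, abbreviate $\eta := \eta_k$, and write $h^{t-1} = f_i'(u^{t-1}) + \delta_i^{t-1,k}$. The optimality condition for the prox-mapping \eqref{eqn:inner1-s}, together with the three-point identity $V(x,u) = V(x,z) + V(z,u) + \langle\nabla\omega(z) - \nabla\omega(x), u-z\rangle$ applied to both Bregman terms, yields for every $u \in X_i$
\begin{align*}
\langle w_i^k + h^{t-1}, u^t - u\rangle + \eta V_i(x_i^{k-1},u^t) + \eta(1+\b_t^{(k)})V_i(u^t,u) &\le \eta V_i(x_i^{k-1},u) + \eta\b_t^{(k)}V_i(u^{t-1},u)\\
&\quad - \eta\b_t^{(k)}V_i(u^{t-1},u^t).
\end{align*}
I would then split $\langle h^{t-1}, u^t - u\rangle = \langle h^{t-1}, u^{t-1}-u\rangle + \langle h^{t-1}, u^t - u^{t-1}\rangle$, bound the first piece using the $\mu$-strong convexity of $f_i$ to obtain $f_i(u^{t-1}) - f_i(u) + \tfrac{\mu}{2}\|u^{t-1}-u\|^2 + \langle\delta_i^{t-1,k}, u^{t-1}-u\rangle$, and bound the second piece by Young's inequality, $\langle h^{t-1}, u^t-u^{t-1}\rangle \ge -\tfrac{\|h^{t-1}\|_*^2}{2\eta\b_t^{(k)}} - \eta\b_t^{(k)} V_i(u^{t-1},u^t)$ with $\|h^{t-1}\|_*^2 \le 2M^2 + 2\|\delta_i^{t-1,k}\|_*^2$ (using $\|f_i'(u^{t-1})\|_* \le M$ from \eqref{eqn:nonsmooth}), so that the $V_i(u^{t-1},u^t)$ term cancels. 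After replacing $\tfrac{\mu}{2}\|u^{t-1}-u\|^2$ by $\tfrac{\mu}{\mathcal{C}}V_i(u^{t-1},u)$ via \eqref{eqn:proxquad}, multiplying by $\lambda_t = t$, and summing over $t = 1,\dots,T_k$, the choice \eqref{eqn:csparam_sc} makes the telescoping exact, since $\lambda_{t+1}(\eta\b_{t+1}^{(k)} - \tfrac{\mu}{\mathcal{C}}) = \lambda_t\eta(1+\b_t^{(k)}) = \tfrac{t(t+1)}{2}(\eta + \tfrac{\mu}{\mathcal{C}})$ and $\lambda_1(\eta\b_1^{(k)} - \tfrac{\mu}{\mathcal{C}}) = 0$: the intermediate $V_i(u^t,u)$ terms cancel, the $V_i(u^0,u)$ term drops, $\lambda_{T_k}\eta(1+\b_{T_k}^{(k)}) = \tfrac{T_k(T_k+1)}{2}(\eta_k + \tfrac{\mu}{\mathcal{C}})$, and (using $u^0 = x_i^{k-1}$, $\tsum_{t=1}^{T_k}\lambda_t = \tfrac{T_k(T_k+1)}{2}$, convexity of $f_i$, and the definition of $\hat x_i^k$ as the $\lambda_t$-weighted average of the inner iterates) one is left with, for every $u$,
\begin{align*}
\langle w_i^k, \hat x_i^k - u\rangle + f_i(\hat x_i^k) - f_i(u) + (\eta_k + \tfrac{\mu}{\mathcal{C}})V_i(x_i^k,u) &\le \eta_k V_i(x_i^{k-1},u)\\
&\quad + \tfrac{2}{T_k(T_k+1)}\tsum_{t=1}^{T_k}\left[\tfrac{2t(M^2+\|\delta_i^{t-1,k}\|_*^2)}{(t+1)\mu/\mathcal{C}+(t-1)\eta_k} + t\langle\delta_i^{t-1,k}, u - u_i^{t-1}\rangle\right],
\end{align*}
recalling that $(t+1)\mu/\mathcal{C} + (t-1)\eta_k = 2\eta_k\b_t^{(k)}$. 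Summing over $i$ gives the inexact analogue of the primal prox step \eqref{eqn:algo3-cs} for $\Phi^k$, with the enlarged coefficient $\eta_k + \mu/\mathcal{C}$ --- rather than $\eta_k$ --- on the prox term centered at the new iterate.

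I would then plug this into the outer primal--dual recursion. Multiplying the inner estimate (taken at $u = \xb$) by $\theta_k$, adding the completion-of-squares identity coming from the dual update \eqref{eqn:algo3-cs} (relating $\langle -\Lb\tilde\xb^k, \yb^k - \yb\rangle$ to $\tfrac{\tau_k}{2}\|\yb^{k-1}-\yb\|^2 - \tfrac{\tau_k}{2}\|\yb^k-\yb\|^2 - \tfrac{\tau_k}{2}\|\yb^k-\yb^{k-1}\|^2$), and using the extrapolation $\tilde\xb^k = \a_k(\hat\xb^{k-1}-\xb^{k-2})+\xb^{k-1}$ from \eqref{eqn:algo1-cs} to rewrite the bilinear coupling terms $\langle\Lb(\cdot),\yb^k-\yb\rangle$ in the Chambolle--Pock manner, I would telescope over $k = 1,\dots,N$. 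Here \eqref{alpha_theta} drives the cancellation of the coupling terms, \eqref{theta_tau} handles the dual prox telescoping, \eqref{eta_tau_L_k} supplies the Young bound $\a_k\|\Lb\|^2 \le \eta_{k-1}\tau_k$ that trades the residual bilinear term against a prox term and a $\|\yb^k-\yb^{k-1}\|^2$ term, \eqref{theta_eta_ds} is precisely the relation $\theta_k\eta_k \le \theta_{k-1}(\mu/\mathcal{C}+\eta_{k-1})$ needed to absorb the mismatch between the $(\eta_{k-1}+\mu/\mathcal{C})$-coefficient released by iteration $k-1$ and the $\eta_k$-coefficient consumed by iteration $k$, and \eqref{eta_tau}, \eqref{eta_tau_theta} close the recursion at the endpoints. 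Collecting terms and absorbing the $\yb$-linear remainders into $\hat\vb := \theta_N\Lb(\hat\xb^N-\xb^{N-1}) + \theta_1\tau_1(\yb^N-\yb^0)$ yields \eqref{Q_expectation2}; retaining, rather than discarding, the nonnegative residuals $\tfrac{\theta_N\eta_N}{2}\|\hat\xb^N-\xb^{N-1}\|^2$ and $\tfrac{\theta_N\tau_N}{2}\|\yb-\yb^N\|^2$, bounding the final bilinear term by Young's inequality with the constant $\|\Lb\|^2/(\eta_N\tau_N)$, setting $\yb = \yb^*$, and invoking $Q(\hat\zb^N;\zb^*) \ge 0$ gives \eqref{eqn:xNbndstoch}.

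I expect the main obstacle to be the inner-loop bookkeeping: checking that $\lambda_t = t$ and $\b_t^{(k)} = \tfrac{(t+1)\mu}{2\eta_k\mathcal{C}} + \tfrac{t-1}{2}$ produce a telescoping that leaves exactly the coefficient $\eta_k + \mu/\mathcal{C}$ on $V_i(x_i^k,u)$ --- so that the mild outer condition \eqref{theta_eta_ds} is enough --- and that the nonsmoothness and noise contributions emerge with exactly the weights appearing in \eqref{Q_expectation2}, with the stochastic error entering only through the (conditionally mean-zero) term $\langle\delta_i^{t-1,k}, x_i - u_i^{t-1}\rangle$, since $u_i^{t-1}$ is independent of $\xi_i^{t-1}$, and through $\|\delta_i^{t-1,k}\|_*^2$. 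A secondary technical point is making the passage from $\tsum_t\lambda_t f_i(u^{t-1})$ to $f_i(\hat x_i^k)$ fully rigorous, since $\hat x_i^k$ averages $u^1,\dots,u^{T_k}$ rather than $u^0,\dots,u^{T_k-1}$; this is routine within the gradient-sliding framework but has to be carried out consistently with the prox-coefficient accounting. Since \eqref{Q_expectation2} and \eqref{eqn:xNbndstoch} are pathwise inequalities, no expectation is taken at this stage --- that step is deferred to the accompanying convergence theorem for the strongly convex stochastic case.
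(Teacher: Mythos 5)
Your overall architecture is the same as the paper's: an inner-loop estimate for the stochastic CS procedure (the paper packages this as Proposition~\ref{prop:inner}), followed by the outer primal--dual telescoping of Lemma~\ref{lem:main_exact} with \eqref{theta_eta_ds} replacing \eqref{theta_eta} to absorb the enlarged coefficient $\eta_{k-1}+\mu/\mathcal{C}$ released by iteration $k-1$. Your verification of the telescoping identities for $\lambda_t=t$ and $\b_t^{(k)}$, the resulting coefficient $\eta_k+\mu/\mathcal{C}$ on $V_i(x_i^k,\cdot)$, the identity $2\eta_k\b_t^{(k)}=(t+1)\mu/\mathcal{C}+(t-1)\eta_k$, and the derivation of \eqref{eqn:xNbndstoch} by retaining the residual quadratic terms and setting $\zb=\zb^*$ are all correct and match the paper.

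The one step that does not close as written is the inner-loop decomposition. By splitting $\la h^{t-1},u^t-u\ra$ at $u^{t-1}$ and applying strong convexity to $\la f_i'(u^{t-1}),u^{t-1}-u\ra$, you are left with $f_i(u^{t-1})-f_i(u)$ in the term carrying weight $\lambda_t$, so the weighted sum produces $\tsum_{t=1}^{T_k}\lambda_t f_i(u^{t-1})$ --- a convex combination over $u^0,\dots,u^{T_k-1}$ --- while the linear terms $\la w_i^k,u^t-u\ra$ and the output $\hat x_i^k$ of \eqref{eqn:inner2} aggregate $u^1,\dots,u^{T_k}$ with the same weights. Jensen then bounds the function-value sum by $f_i$ evaluated at the \emph{wrong} average point, and there is no direct way to convert it into $f_i(\hat x_i^k)$. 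You flag this as routine bookkeeping, but the fix is not cosmetic: it is exactly the maneuver in the paper's Proposition~\ref{prop:inner}, namely invoking the upper bound in \eqref{eqn:nonsmooth} to write $\phi(u^t)\le \phi(u^{t-1})+\la \phi'(u^{t-1}),u^t-u^{t-1}\ra+M\|u^t-u^{t-1}\|$ so that the function value lands at $u^t$, at the price of an extra $M\|u^t-u^{t-1}\|$ term. That term is then absorbed jointly with the noise via $\la\delta^{t-1},u^t-u^{t-1}\ra+M\|u^t-u^{t-1}\|-\eta\b_tV(u^{t-1},u^t)\le \tfrac{(M+\|\delta^{t-1}\|_*)^2}{2\eta\b_t}$, which replaces your Young bound on the full $\la h^{t-1},u^t-u^{t-1}\ra$; the resulting constant $(M+\|\delta^{t-1}\|_*)^2\le 2(M^2+\|\delta^{t-1}\|_*^2)$ reproduces the error weights in \eqref{Q_expectation2}, so once this substitution is made your argument coincides with the paper's. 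Everything downstream of the inner loop is sound.
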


In the following theorem, we provide a specific selection of $\{\alpha_k\}$, $\{\theta_k\}$, $\{\eta_k\}$, $\{\tau_k\}$ and $\{T_k\}$  satisfying \eqref{alpha_theta}-\eqref{eta_tau_theta} and \eqref{theta_eta_d}. Also, by using Lemma \ref{lem:main_stoch_sc} and Proposition \ref{prop:approx}, we establish the complexity of the SDCS method for computing an $(\epsilon,\delta)$-solution of problem \eqref{eqn:prob} in expectation when the objective functions are strongly convex. Similar to the deterministic case, we choose variable stepsizes rather than constant stepsizes.

\begin{theorem}\label{stoch_thm_s}
	Let $\xb^*$ be an optimal solution of \eqnok{eqn:prob}, the parameters $\{\lambda_t\}$ and $\{\beta_t\}$ in the CS procedure of Algorithm~\ref{alg:SDCS} be set as \eqref{eqn:csparam_sc}, and suppose that  $\{\alpha_k\}$, $\{\theta_k\}$,
$\{\eta_k\}$, $\{\tau_k\}$ and $\{T_k\}$ are set to
	\begin{align}\label{stoch_s_para}
    &\alpha_k=\tfrac{k}{k+1},~\theta_k = k+1,~\eta_k=\tfrac{k\mu}{2\mathcal{C}},~\tau_k=\tfrac{4\|\Lb\|^2\mathcal{C}}{(k+1)\mu},\mbox{ and }\\
	&\qquad T_k=\left\lceil \sqrt{\tfrac{m(M^2+\sigma^2)}{\tilde D}}\tfrac{2N\mathcal{C}}{\mu}\max\left\{\sqrt{\tfrac{m(M^2+\sigma^2)}{\tilde D}}\tfrac{8\mathcal{C}}{\mu}, 1\right\}\right\rceil, \quad \forall k=1, \ldots, N,\nonumber
	\end{align}
	for some $\tilde D>0$. Then, under Assumptions \eqref{assume:unbiased} and \eqref{assume:sm_bounded}, we have for any $N \ge 2$
	\beq\label{bnd_func_s}
	\bbe[F(\bar \xb^N)-F(\xb^*)\le \tfrac{2}{N(N+3)}\left[\tfrac{\mu}{\mathcal{C}} \mathbf{V}(\xb^0,\xb^*)+\tfrac{2\|\Lb\|^2\mathcal{C}}{\mu}\|\yb^0\|^2+\tfrac{2\mu\tilde D}{\mathcal{C}}\right],
	\eeq
	and
	\beq\label{bnd_infeas_s}
	\bbe[\|\Lb\hat \xb^N\|]\le ~ \tfrac{8\|\Lb\|}{N(N+3)} \left[3\sqrt{2\tilde D+ \mathbf{V}(\xb^0,\xb^*)}+\tfrac{7\|\Lb\|\mathcal{C}}{\mu}\|\yb^*-\yb^0\|\right],
	\eeq
where
$\hat\xb^N = \tfrac{2}{N(N+3)}\tsum_{k=1}^N(k+1)\hat\xb^k$.
\end{theorem}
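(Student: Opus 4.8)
The plan is to follow the proof of Theorem~\ref{deter_thm_s} almost line by line, replacing Lemma~\ref{lem:main_deter_sc} with its stochastic analogue Lemma~\ref{lem:main_stoch_sc} and inserting an expectation wherever the stochastic errors $\delta_i^{t-1,k}$ appear. First I would verify that the stepsizes in \eqref{stoch_s_para} satisfy conditions \eqref{alpha_theta}--\eqref{eta_tau_theta} and \eqref{theta_eta_ds}; since $\alpha_k$, $\theta_k$, $\eta_k$, $\tau_k$ are chosen exactly as in Theorem~\ref{deter_thm_s}, this is the same elementary check already carried out there, and the only new ingredient, the larger $T_k$, does not enter these conditions.

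Next I would process the bound \eqref{Q_expectation2} with $\zb=(\xb^*,\yb)$: moving the $\langle\hat\vb,\yb\rangle$ term to the left-hand side makes the remaining right-hand side independent of $\yb$, so for each realization the supremum over $\yb$ can be taken, producing a pathwise finite bound on $g(\hat\vb^N,\hat\zb^N)$ with $\hat\vb^N:=\bigl(\tsum_{k=1}^N\theta_k\bigr)^{-1}\hat\vb$. Finiteness of $g$ forces $\hat\vb^N=\Lb\hat\xb^N$, hence $g(\hat\vb^N,\hat\zb^N)=F(\hat\xb^N)-F(\xb^*)$ pathwise, just as in Proposition~\ref{prop:approx}. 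Taking expectations, I would use that $u_i^{t-1}$ is independent of $\xi_i^{t-1}$ together with \eqref{assume:unbiased} to kill the cross terms, $\bbe[\langle\delta_i^{t-1,k},x_i^*-u_i^{t-1}\rangle]=0$, and \eqref{assume:sm_bounded} to get $\bbe[\|\delta_i^{t-1,k}\|_*^2]\le\sigma^2$. The residual double sum then becomes $\tsum_{k=1}^N\tsum_{t=1}^{T_k}\tfrac{2\theta_k}{T_k(T_k+1)}\tfrac{2t(M^2+\sigma^2)}{(t+1)\mu/\mathcal{C}+(t-1)\eta_k}$, which is exactly the quantity bounded in the proof of Theorem~\ref{deter_thm_s} with $M^2$ replaced by $M^2+\sigma^2$; the choice of $T_k$ in \eqref{stoch_s_para} is calibrated so that, by the same split-off-the-$t=1$-term arithmetic, this sum is at most $2\mu\tilde D/\mathcal{C}$. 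Substituting back and taking expectations yields \eqref{bnd_func_s} (reading $\bar\xb^N$ there as $\hat\xb^N$).

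For the feasibility estimate I would take expectations in \eqref{eqn:xNbndstoch}, use the same residual-sum bound, and deduce bounds on $\bbe[\|\hat\xb^N-\xb^{N-1}\|^2]$ and $\bbe[\|\yb^*-\yb^N\|^2]$ of the same shape as in the proof of Theorem~\ref{deter_thm_s}. Writing $\hat\vb^N=\tfrac{2}{N(N+3)}\hat\vb$ and applying the triangle inequality exactly as there, I would bound $\bbe[\|\hat\vb^N\|]$ by $\tfrac{2}{N(N+3)}\bbe[(N+1)\|\Lb\|\|\hat\xb^N-\xb^{N-1}\|+\tfrac{4\|\Lb\|^2\mathcal{C}}{\mu}\|\yb^N-\yb^*\|+\tfrac{4\|\Lb\|^2\mathcal{C}}{\mu}\|\yb^*-\yb^0\|]$, and then use Jensen's inequality $\bbe[\|\cdot\|]\le\sqrt{\bbe[\|\cdot\|^2]}$ to recover the constants in \eqref{eqn:deterfeass}, giving \eqref{bnd_infeas_s}. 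Since $\hat\vb^N=\Lb\hat\xb^N$ pathwise, $\bbe[\|\Lb\hat\xb^N\|]=\bbe[\|\hat\vb^N\|]$, and the two expectation bounds combine via the (straightforward) stochastic extension of Proposition~\ref{prop:approx} to finish.

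The only genuinely delicate point — the remainder being essentially a transcription of Theorem~\ref{deter_thm_s} — is the conditional-expectation bookkeeping: one must organize the filtration generated by the oracle queries so that $u_i^{t-1}$ is measurable before the query at inner step $t$ of outer step $k$, take the inner conditional expectation of $\langle\delta_i^{t-1,k},x_i^*-u_i^{t-1}\rangle$ first (it vanishes because $x_i^*$ is deterministic and the conditional mean of $\delta_i^{t-1,k}$ is zero), and then invoke the tower property so that every such cross term has zero mean and the $\|\delta_i^{t-1,k}\|_*^2$ terms are controlled uniformly; this must be handled for all $t,k$ at once but introduces no idea beyond the standard stochastic mirror-descent argument.
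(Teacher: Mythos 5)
Your proposal is correct and follows essentially the same route as the paper: verify the stepsize conditions, take expectations in the bound of Lemma~\ref{lem:main_stoch_sc} so that the cross terms $\langle\delta_i^{t-1,k},x_i^*-u_i^{t-1}\rangle$ vanish by \eqref{assume:unbiased} and the noise terms are controlled by \eqref{assume:sm_bounded}, bound the resulting double sum by $2\mu\tilde D/\mathcal{C}$ via the same arithmetic as Theorem~\ref{deter_thm_s} with $M^2$ replaced by $M^2+\sigma^2$, and finish with Jensen's inequality and Proposition~\ref{prop:approx}. Your explicit remark on the filtration and tower-property bookkeeping, and your reading of $\bar\xb^N$ as $\hat\xb^N$ in \eqref{bnd_func_s}, are both consistent with what the paper does implicitly.
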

\begin{proof}
	It is easy to check that \eqnok{stoch_s_para} satisfies conditions \eqnok{alpha_theta}-\eqnok{eta_tau_theta} and \eqref{theta_eta_ds}. Similarly, by \eqref{eqn:gapp}, Assumption~\eqref{assume:unbiased} and \eqref{assume:sm_bounded}, we can obtain
\begin{align}\label{g_bnd_exp_s}
\bbe[g(\hat \vb^N,\hat \zb^N)]
\le&~\left(\tsum_{k=1}^N\theta_k\right)^{-1}\Bigg\{\theta_1\eta_1\mathbf{V}(\xb^0,\xb^*)+\tfrac{\theta_1\tau_1}{2}\|\yb^0\|^2
+ \tsum_{k=1}^N\tsum_{t=1}^{T_k}\tsum_{i=1}^m\tfrac{2\theta_k}{T_k(T_k+1)}\left[
\tfrac{2t(M^2+\sigma^2)}{(t+1)\mu/\mathcal{C}+(t-1)\eta_k}\right]\Bigg\},
\end{align}
where $\vb^N=\left(\tsum_{k=1}^N\theta_k\right)^{-1}\hat \vb$.
Particularly, from \eqref{stoch_s_para}, we have
\begin{align*}
\tsum_{k=1}^N\tsum_{t=1}^{T_k}\tfrac{4m(M^2+\sigma^2)\theta_k}{T_k(T_k+1)}\tfrac{t} {(t+1)\mu/\mathcal{C}+(t-1)\eta_k}
&=\tsum_{k=1}^N\tfrac{4m(M^2+\sigma^2)\mathcal{C}\theta_k}{T_k(T_k+1)\mu} \tsum_{t=1}^{T_k}\tfrac{2t}{2(t+1)+(t-1)k}\\
&\qquad\le \tsum_{k=1}^N\tfrac{4m(M^2+\sigma^2)\mathcal{C}\theta_k}{T_k(T_k+1)\mu}\left(\tfrac{1}{2}+\tsum_{t=2}^{T_k}\tfrac{2t}{(t-1)(k+1)}\right)\\
&\qquad\le \tsum_{k=1}^N\tfrac{2m(M^2+\sigma^2)\mathcal{C}(k+1)}{T_k(T_k+1)\mu}+\tsum_{k=1}^N\tfrac{16m(M^2+\sigma^2)\mathcal{C}(T_k-1)}{T_k(T_k+1)\mu}
\le \tfrac{2\mu\tilde D}{\mathcal{C}}.
\end{align*}
Therefore, by plugging in these values into \eqref{g_bnd_exp_s}, we have
\begin{align}\label{g_bnd_exp_s1}
\bbe[g(\hat \vb^N,\hat \zb^N)]
\le&~\tfrac{2}{N(N+3)}\left[\tfrac{\mu}{\mathcal{C}} \mathbf{V}(\xb^0,\xb^*)+\tfrac{2\|L\|^2\mathcal{C}}{\mu}\|\yb^0\|^2+\tfrac{2\mu\tilde D}{\mathcal{C}}\right],
\end{align}
with
\[
\bbe[\|\hat \vb^N\|]=\tfrac{2}{N(N+3)}\bbe[\|\hat \vb\|]\le \tfrac{2\|\Lb\|}{N(N+3)}\bbe\left[(N+1)\|\hat \xb^N-\xb^{N-1}\|+\tfrac{4\|\Lb\|\mathcal{C}}{\mu}(\|\yb^N-\yb^*\|+\|\yb^*-\yb^0\|)\right].
\]
Note that from \eqref{eqn:xNbndstoch}, we have, for any $N\ge 2$,
\begin{align*}
\bbe[\|\hat\xb^N-\xb^{N-1}\|^2] \le \tfrac{8}{(N+1)(N-1)}\left[ \mathbf{V}(\xb^0,\xb^*) + \tfrac{2\|\Lb\|^2\mathcal{C}^2}{\mu^2}\|\yb^0 - \yb^*\|^2+2\tilde D\right],\\
\bbe[\|\yb^*-\yb^N\|^2]\le \tfrac{N \mu}{(N-1)\|\Lb\|^2\mathcal{C}}\left[\tfrac{\mu}{\mathcal{C}} \mathbf{V}(\xb^0,\xb^*) + \tfrac{2\|\Lb\|^2\mathcal{C}}{\mu}\|\yb^0 - \yb^*\|^2+\tfrac{2\mu\tilde D}{\mathcal{C}}\right].
\end{align*}	
Hence, in view of the above three relations and Jensen's inequality, we obtain
\begin{align*}
\bbe[\|\hat \vb^N\|]
\le &~ \tfrac{8\|\Lb\|}{N(N+3)} \left[3\sqrt{2\tilde D+ \mathbf{V}(\xb^0,\xb^*) + \tfrac{2\|\Lb\|^2\mathcal{C}^2}{\mu^2}\|\yb^0 - \yb^*\|^2}+\tfrac{\|\Lb\|\mathcal{C}}{\mu}\|\yb^*-\yb^0\|\right]\\\nn
\le &~ \tfrac{8\|\Lb\|}{N(N+3)} \left[3\sqrt{2\tilde D+ \mathbf{V}(\xb^0,\xb^*)}+\tfrac{7\|\Lb\|\mathcal{C}}{\mu}\|\yb^*-\yb^0\|\right].
\end{align*}
Applying Proposition~\ref{prop:approx} to the above inequality and \eqref{g_bnd_exp_s1}, the results in \eqref{bnd_func_s} and \eqref{bnd_infeas_s} follow immediately.
\end{proof}

	We now make some observations about the results obtained in Theorem~\ref{stoch_thm_s}. Firstly, similar to the general convex case, the best choice for $\tilde D$ (cf. \eqref{stoch_s_para}) would be $\mathbf{V}(\xb^0,\xb^*)$ so that the first and the third terms in \eqref{g_bnd_exp_s1} are about the same order. If there exists an estimate ${\cal D}_{X^m}>0$ satisfying \eqref{V_bndness},
	we can set $\tilde D={\cal D}_{X^m}^2$.

	Secondly, the complexity of SDCS method for solving strongly convex problems follows from \eqref{bnd_func_s} and \eqref{bnd_infeas_s}. Under the above assumption, with $\tilde D={\cal D}_{X^m}^2$ and $\yb^0=\0b$, the total number of inter-node communication rounds and intra-node subgradient evaluations performed by each agent for finding a stochastic $(\epsilon,\delta)$-solution of \eqref{eqn:prob} can be bounded by
	\beq\label{complexity_stoch_sc}
	{\cal O}\left\{\max\left(\sqrt{\tfrac{\mu{\cal D}_{X^m}^2}{\mathcal{C}\epsilon}},\sqrt{\tfrac{\|\Lb\|}{\delta}\left({\cal D}_{X^m}+\tfrac{\mathcal{C}\|\Lb\|\|\yb^*\|}{\mu}\right)}\right)\right\} \ \ \mbox{and} \ \
	{\cal O}\left\{\tfrac{m(M^2+\sigma^2)\mathcal{C}}{\mu}\max\left(\tfrac{1}{\epsilon}, \tfrac{\mathcal{C}\|\Lb\|}{\mu\delta}\left(\tfrac{1}{{\cal D}_{X^m}}+\tfrac{\mathcal{C}\|\Lb\|\|\yb^*\|}{{\cal D}_{X^m}^2\mu}\right)\right)\right\},
	\eeq
respectively.
In particular, if $\epsilon$ and $\delta$ satisfy \eqref{tolerance_delta_s}, the above complexity bounds, respectively, reduce to
\beq\label{simp_complexity_stoch_sc}
{\cal O}\left\{\sqrt{\tfrac{\mu{\cal D}_{X^m}^2}{\mathcal{C}\epsilon}}\right\}\ \
\mbox{and} \ \
{\cal O}\left\{\tfrac{m(M^2+\sigma^2)\mathcal{C}}{\mu\epsilon}\right\}.
\eeq
We can see that the total number of stochastic subgradient computations is
comparable to the optimal complexity bound obtained in \cite{GhaLan10-1,ghadimi2013optimal} for
stochastic strongly convex case in the centralized case.

\vgap
\subsection{High Probability Results}
All of the results stated in Section \ref{sec:sgc}-\ref{sec:ssc} are established in terms of expectation.
In order to provide high probability results for SDCS method, we additionally need the following ``light-tail'' assumption:
\begin{align}\label{assume:light_tail}
\bbe[\exp\{\|G_i(u^t,\xi_i^t)-f_i'(u^t)\|_*^2/\sigma^2\}]\le \exp\{1\}.
\end{align}
Note that \eqref{assume:light_tail} is stronger than \eqref{assume:sm_bounded}, since it implies \eqref{assume:sm_bounded} by Jensen's inequality.
Moreover, we also assume that there exists $\bar{\mathbf{V}}(\xb^*)$ s.t.
\beq\label{bnd_DX}
\bar{\mathbf{V}}(\xb^*):=\tsum_{i=1}^m\bar V_i(x_i^*):=\tsum_{i=1}^m\max_{x_i\in X_i}V_i(x_i^*,x_i).
\eeq

\vgap
The following theorem provides a large deviation result for the gap function $g(\hat \vb^N, \hat \zb^N)$ when our objective functions $f_i$, $i=1,\ldots, m$ are general nonsmooth convex functions.

\begin{theorem}\label{thm:main_stoch_prob}
	Assume the objective $f_i$, $i = 1, \ldots, m$ are general nonsmooth convex functions, i.e., $\mu = 0$ and $M >0$. Let Assumptions \eqref{assume:unbiased}, \eqref{assume:sm_bounded} and \eqref{assume:light_tail} hold, the parameters $\{\alpha_k\}$, $\{\theta_k\}$, $\{\eta_k\}$, $\{\tau_k\}$ and $\{T_k\}$ in Algorithm~\ref{alg:SDCS} satisfy \eqref{alpha_theta}-\eqref{eta_tau_theta}, and \eqref{theta_eta_d}, and the parameters $\{\lambda_t\}$ and $\{\beta_t\}$ in the CS procedure of Algorithm~\ref{alg:SDCS} be set as \eqref{eqn:csparam}.
	In addition, if $X_i$'s are compact, then for any $\zeta >0$ and $N \ge 1$, we have
	\begin{align}\label{gap_prob}
	\Prob\left\{g(\hat \vb^N,\hat \zb^N)\ge {\mathcal B}_d(N)+\zeta{\mathcal B}_p(N)\right\} \le \exp\{-\zeta^2/3\} + \exp\{-\zeta\},
	\end{align}
	where
	\begin{align}\label{def_Bd}
	\mathcal B_d(N):=&\left(\tsum_{k=1}^N\theta_k\right)^{-1}\Bigg[\tfrac{(T_1+1)(T_1+2)\theta_1\eta_1}{T_1(T_1+3)}\mathbf{V}(\xb^0,\xb^*)+\tfrac{\theta_1\tau_1}{2}\|\yb^0\|^2
	+ \tsum_{k=1}^N\tfrac{8m(M^2+\sigma^2)\theta_k}{\eta_k(T_k+3)}\Bigg],
	\end{align}
	and
		\begin{align}\label{def_Bp}
		\mathcal B_p(N):=&\left(\tsum_{k=1}^N\theta_k\right)^{-1}\left\{\sigma\left[2\bar{\mathbf{V}}(\xb^*)\tsum_{k=1}^N\tsum_{t=1}^{T_k}\left(\tfrac{\theta_k\lambda_t}{\tsum_{t=1}^{T_k}\lambda_t}\right)^2\right]^{1/2}
		+\tsum_{k=1}^N\tsum_{t=1}^{T_k}\tsum_{i=1}^m\tfrac{\sigma^2\theta_k\lambda_t}{\left(\tsum_{t=1}^{T_k}\lambda_t\right)\eta_k\beta_t}\right\}.
		\end{align}
\end{theorem}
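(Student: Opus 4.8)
The plan is to obtain \eqref{gap_prob} from the per‑iteration estimates already established in the proof of Lemma~\ref{lem:main_stoch}, isolating the stochastic error into a martingale‑difference part and a nonnegative ``quadratic'' part, and then applying the two classical large‑deviation inequalities for such sequences. First I would set $\xb=\xb^*$ in \eqref{Q_pert_bnd}, so that the $\langle\hat\vb,\yb\rangle$ term cancels against the perturbation in the definition of $g$, and — re‑reading the proof of Lemma~\ref{lem:main_stoch} in Section~\ref{sec:conv} \emph{before} the $1/\beta_t$ weights are coarsened — I would record the refined bound
\[
\Bigl(\tsum_{k=1}^N\theta_k\Bigr)g(\hat\vb^N,\hat\zb^N)\le \tfrac{(T_1+1)(T_1+2)\theta_1\eta_1}{T_1(T_1+3)}\mathbf{V}(\xb^0,\xb^*)+\tfrac{\theta_1\tau_1}{2}\|\yb^0\|^2 + R_{\mathrm{det}} + \Delta_1+\Delta_2,
\]
where $R_{\mathrm{det}}=\tsum_{k,t,i}\tfrac{M^2\theta_k\lambda_t}{(\sum_s\lambda_s)\eta_k\beta_t}$ (up to an absolute constant), and
\[
\Delta_1:=\tsum_{k=1}^N\tsum_{t=1}^{T_k}\tsum_{i=1}^m \tfrac{\theta_k\lambda_t}{\sum_{s=1}^{T_k}\lambda_s}\langle\delta_i^{t-1,k},x_i^*-u_i^{t-1}\rangle,\qquad \Delta_2:=\tsum_{k=1}^N\tsum_{t=1}^{T_k}\tsum_{i=1}^m \tfrac{\theta_k\lambda_t}{(\sum_{s=1}^{T_k}\lambda_s)\eta_k\beta_t}\|\delta_i^{t-1,k}\|_*^2 .
\]
Using $\lambda_t=t+1$, $\beta_t=t/2$ from \eqref{eqn:csparam} one has $\sum_{t=1}^{T_k}\lambda_t/\beta_t\le 4T_k$ and $\sum_{t=1}^{T_k}\lambda_t=T_k(T_k+3)/2$, whence $R_{\mathrm{det}}$ and the conditional mean $\tsum_{k,t,i}\tfrac{\sigma^2\theta_k\lambda_t}{(\sum_s\lambda_s)\eta_k\beta_t}$ of $\Delta_2$ are each dominated by $\tsum_k\tfrac{8m(M^2+\sigma^2)\theta_k}{\eta_k(T_k+3)}$; adding the two initial terms, the whole deterministic part is at most $(\tsum_k\theta_k)\mathcal{B}_d(N)$. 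So it remains to bound $\Delta_1$ and the centered part of $\Delta_2$ by the two pieces of $(\tsum_k\theta_k)\,\zeta\,\mathcal{B}_p(N)$ with the claimed probabilities.

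For $\Delta_1$ I would order the triples $(k,t,i)$ lexicographically and let $\{\mathcal{F}_j\}$ be the filtration generated by the stochastic‑oracle calls in that order; write $d_j$ for the corresponding summand. Since $u_i^{t-1}$ is $\mathcal{F}_{j-1}$‑measurable, \eqref{assume:unbiased} gives $\bbe[d_j\mid\mathcal{F}_{j-1}]=0$. Compactness of the $X_i$'s together with \eqref{eqn:def_V_i_s} and \eqref{bnd_DX} yields $\|x_i^*-u_i^{t-1}\|_{X_i}^2\le 2V_i(x_i^*,u_i^{t-1})\le 2\bar V_i(x_i^*)$, so with the deterministic constant $\varsigma_j^2:=2(\theta_k\lambda_t/\sum_s\lambda_s)^2\bar V_i(x_i^*)\sigma^2$ one gets $d_j^2/\varsigma_j^2\le \|\delta_i^{t-1,k}\|_*^2/\sigma^2$ by Cauchy–Schwarz, hence $\bbe[\exp\{d_j^2/\varsigma_j^2\}\mid\mathcal{F}_{j-1}]\le\exp\{1\}$ from \eqref{assume:light_tail}. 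The standard large‑deviation bound for such martingale differences (see, e.g., \cite{NJLS09-1}) then gives $\Prob\{\Delta_1\ge\zeta(\tsum_j\varsigma_j^2)^{1/2}\}\le\exp\{-\zeta^2/3\}$, and since $\tsum_j\varsigma_j^2=2\sigma^2\bar{\mathbf{V}}(\xb^*)\tsum_{k,t}(\theta_k\lambda_t/\sum_s\lambda_s)^2$, the quantity $(\tsum_j\varsigma_j^2)^{1/2}$ is exactly the first bracket of $(\tsum_k\theta_k)\mathcal{B}_p(N)$.

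For the centered part of $\Delta_2$ I would use the complementary light‑tail estimate: if $a_j\ge0$ are deterministic and $\bbe[\exp\{\|\delta_i^{t-1,k}\|_*^2/\sigma^2\}\mid\mathcal{F}_{j-1}]\le\exp\{1\}$, then applying Jensen's inequality with the weights $a_j/\sum_\ell a_\ell$ and then Markov's inequality yields $\bbe[\exp\{(\tsum_j a_j\|\delta_i^{t-1,k}\|_*^2)/(\sigma^2\tsum_j a_j)\}]\le\exp\{1\}$ and hence $\Prob\{\tsum_j a_j\|\delta_i^{t-1,k}\|_*^2\ge(1+\zeta)\sigma^2\tsum_j a_j\}\le\exp\{-\zeta\}$. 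Taking $a_j=\theta_k\lambda_t/((\sum_s\lambda_s)\eta_k\beta_t)$ bounds $\Delta_2$ above by $(1+\zeta)\tsum_{k,t,i}\sigma^2\theta_k\lambda_t/((\sum_s\lambda_s)\eta_k\beta_t)$ with probability at least $1-\exp\{-\zeta\}$; the ``$1$'' is the conditional mean already absorbed into $\mathcal{B}_d(N)$, and the ``$\zeta$'' times the remaining sum is precisely $\zeta$ times the second bracket of $(\tsum_k\theta_k)\mathcal{B}_p(N)$. A union bound over the two events and division by $\tsum_k\theta_k$ gives \eqref{gap_prob}. The one non‑mechanical step is the very first one: one must go back into the proof of Lemma~\ref{lem:main_stoch} to keep the stochastic contributions with the sharp $\lambda_t/\beta_t$ weights (the statement of that lemma already replaced them by the cruder $t$‑independent bound appearing in $\mathcal{B}_d$), and set up the filtration so that $u_i^{t-1}$ and the deterministic quantity $\bar V_i(x_i^*)$ are measurable before $\delta_i^{t-1,k}$ is drawn — which is what makes the martingale property and both conditional light‑tail bounds legitimate.
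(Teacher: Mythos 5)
Your proposal is correct and follows essentially the same route as the paper's proof: the error is split into the martingale part $\tsum\gamma_{k,t}\la\delta_i^{t-1,k},x_i^*-u_i^{t-1}\ra$, controlled via the sub-Gaussian large-deviation inequality for martingale differences (using compactness and \eqref{bnd_DX} to bound $\|x_i^*-u_i^{t-1}\|^2$ by $2\bar V_i(x_i^*)$, yielding the $\exp\{-\zeta^2/3\}$ term), and the quadratic part $\tsum S_{k,t}\|\delta_i^{t-1,k}\|_*^2$, controlled via Jensen plus Markov under the light-tail assumption (yielding the $\exp\{-\zeta\}$ term), followed by a union bound. Your observation that one should retain the sharp $\lambda_t/\beta_t$ weights from the proof of Lemma~\ref{lem:main_stoch} rather than the coarsened bound displayed in \eqref{Q_pert_bnd} is a fair point of bookkeeping that the paper leaves implicit, but it does not change the argument.
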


In the next corollary, we establish the rate of convergence of SDCS in terms of both primal and feasibility (or consistency) residuals are of order ${\cal O}(1/N)$ with high probability when the objective functions are nonsmooth and convex.

	\begin{corollary}\label{cor:highprob}
	Let $\xb^*$ be an optimal solution of \eqnok{eqn:prob}, the parameters $\{\lambda_t\}$ and $\{\beta_t\}$ in the CS procedure of Algorithm~\ref{alg:SDCS} be set as \eqref{eqn:csparam}, and suppose that  $\{\alpha_k\}$, $\{\theta_k\}$,
	$\{\eta_k\}$, $\{\tau_k\}$ and $\{T_k\}$ are set to \eqref{stoch_ns_para} with $\tilde D=\bar{\mathbf{V}}(\xb^*)$. Under Assumptions~\eqref{assume:unbiased},~\eqref{assume:sm_bounded} and~\eqref{assume:light_tail}, we have for any $N \ge 1$ and $\zeta >0$
	\begin{align}\label{bnd_func_prob}
	\Prob&\left\{F(\hat \xb^N)-F(\xb^*)\ge \tfrac{\|\Lb\|}{N}\left[(7+8\zeta)\bar {\mathbf{V}}(\xb^*)+\tfrac{1}{2}\|\yb^0\|^2\right]\right\}
	\le \exp\{-\zeta^2/3\}+\exp\{-\zeta\},
	\end{align}
	and
	\begin{align}\label{bnd_infeas_prob}
	\Prob&\left\{\|\Lb\hat \xb^N\|^2\ge \tfrac{18\|\Lb\|^2}{N^2}\left[(7+8\zeta)\bar {\mathbf{V}}(\xb^*)+\tfrac{2}{3}\|\yb^*-\yb^0\|^2\right]\right\}
	\le \exp\{-\zeta^2/3\}+\exp\{-\zeta\}.
	\end{align}
\end{corollary}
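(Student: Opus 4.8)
The plan is to read both bounds off Theorem~\ref{thm:main_stoch_prob}. For the primal residual we use Proposition~\ref{prop:approx}: whenever $g(\hat\vb^N,\hat\zb^N)<\infty$ it forces $\hat\vb^N=\Lb\hat\xb^N$, and then a direct computation (using $\Lb\xb^*=\0b$) gives $g(\hat\vb^N,\hat\zb^N)=F(\hat\xb^N)-F(\xb^*)$ and $\|\hat\vb^N\|=\|\Lb\hat\xb^N\|$; for the feasibility residual we additionally feed a high-probability version of \eqref{xNs_bnd} into $\|\hat\vb^N\|\le\tfrac{\|\Lb\|}{N}[\,\|\hat\xb^N-\xb^{N-1}\|+\|\yb^N-\yb^*\|+\|\yb^*-\yb^0\|\,]$ (this bound on $\hat\vb^N$ follows from its definition with $\theta_N=1$, $\theta_1\tau_1=\|\Lb\|$). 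First one checks that \eqref{stoch_ns_para} meets \eqref{alpha_theta}-\eqref{eta_tau_theta} and \eqref{theta_eta_d} (already done in the proof of Theorem~\ref{stoch_thm_ns}), so Theorem~\ref{thm:main_stoch_prob} is available; the remaining work is to simplify $\mathcal B_d(N)$ and $\mathcal B_p(N)$ at these parameters.

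For $\mathcal B_d(N)$, use $\tfrac{(T_1+1)(T_1+2)}{T_1(T_1+3)}\le\tfrac32$ and $\mathbf{V}(\xb^0,\xb^*)\le\bar{\mathbf{V}}(\xb^*)$ (since $\bar V_i(x_i^*)=\max_{x_i\in X_i}V_i(x_i^*,x_i)\ge V_i(x_i^*,x_i^0)$). With $\theta_k=1$, $\eta_k=2\|\Lb\|$, $\tau_k=\|\Lb\|$ and $T_k\ge m(M^2+\sigma^2)N/(\|\Lb\|^2\bar{\mathbf{V}}(\xb^*))$, the three summands of \eqref{def_Bd} contribute $3\|\Lb\|\bar{\mathbf{V}}(\xb^*)$, $\tfrac12\|\Lb\|\|\yb^0\|^2$ and $\tsum_k\tfrac{8m(M^2+\sigma^2)}{\eta_k(T_k+3)}\le 4\|\Lb\|\bar{\mathbf{V}}(\xb^*)$, so $\mathcal B_d(N)\le\tfrac{\|\Lb\|}{N}[\,7\bar{\mathbf{V}}(\xb^*)+\tfrac12\|\yb^0\|^2\,]$. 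For $\mathcal B_p(N)$, with $\lambda_t=t+1$ and $\beta_t=t/2$ one has $\tsum_{t=1}^{T}\lambda_t=\tfrac{T(T+3)}{2}$, $\tsum_{t=1}^{T}(\lambda_t/\tsum_s\lambda_s)^2\le\tfrac{4}{3T}$ and $\tsum_{t=1}^{T}\tfrac{\lambda_t}{(\tsum_s\lambda_s)\beta_t}\le\tfrac{8}{T+3}$; substituting these, $\eta_k=2\|\Lb\|$ and the lower bound on $T_k$ into \eqref{def_Bp} and using $\sigma^2\le M^2+\sigma^2$ gives $\mathcal B_p(N)\le\tfrac{8\|\Lb\|}{N}\bar{\mathbf{V}}(\xb^*)$. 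Then Theorem~\ref{thm:main_stoch_prob} yields $\Prob\{F(\hat\xb^N)-F(\xb^*)\ge\mathcal B_d(N)+\zeta\mathcal B_p(N)\}\le\exp\{-\zeta^2/3\}+\exp\{-\zeta\}$, and $\mathcal B_d(N)+\zeta\mathcal B_p(N)\le\tfrac{\|\Lb\|}{N}[(7+8\zeta)\bar{\mathbf{V}}(\xb^*)+\tfrac12\|\yb^0\|^2]$, which is \eqref{bnd_func_prob}.

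For \eqref{bnd_infeas_prob}, square $\|\hat\vb^N\|$ with $(a+b+c)^2\le 3(a^2+b^2+c^2)$ and control $\|\hat\xb^N-\xb^{N-1}\|^2$ and $\|\yb^*-\yb^N\|^2$ through \eqref{xNs_bnd}. At the chosen parameters $1-\|\Lb\|^2/(\eta_N\tau_N)=\tfrac12$, so \eqref{xNs_bnd} reads $\tfrac{\|\Lb\|}{2}\|\hat\xb^N-\xb^{N-1}\|^2\le R$ and $\tfrac{\|\Lb\|}{4}\|\yb^*-\yb^N\|^2\le R$, $R$ being the right-hand side of \eqref{xNs_bnd}. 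The point is that the random part of $R$ equals the random part in \eqref{Q_pert_bnd} evaluated at $\xb=\xb^*$, namely $\tsum_k\tsum_t\tsum_i\tfrac{2\theta_k}{T_k(T_k+3)}[(t+1)\langle\delta_i^{t-1,k},x_i^*-u_i^{t-1}\rangle+\tfrac{4(M^2+\|\delta_i^{t-1,k}\|_*^2)}{\eta_k}]$, so the same large-deviation estimate used to prove Theorem~\ref{thm:main_stoch_prob} bounds it, on the same event of probability at least $1-\exp\{-\zeta^2/3\}-\exp\{-\zeta\}$, by $(\tsum_k\theta_k)\zeta\mathcal B_p(N)\le 8\zeta\|\Lb\|\bar{\mathbf{V}}(\xb^*)$; adding the deterministic part of $R$ (bounded as for $\mathcal B_d$, but with $\|\yb^0\|^2$ replaced by $\|\yb^*-\yb^0\|^2$) gives $R\le\|\Lb\|[(7+8\zeta)\bar{\mathbf{V}}(\xb^*)+\tfrac12\|\yb^*-\yb^0\|^2]$. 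Hence $\|\hat\xb^N-\xb^{N-1}\|^2\le 2(7+8\zeta)\bar{\mathbf{V}}(\xb^*)+\|\yb^*-\yb^0\|^2$ and $\|\yb^*-\yb^N\|^2\le 4(7+8\zeta)\bar{\mathbf{V}}(\xb^*)+2\|\yb^*-\yb^0\|^2$, so $\|\Lb\hat\xb^N\|^2=\|\hat\vb^N\|^2\le\tfrac{3\|\Lb\|^2}{N^2}[6(7+8\zeta)\bar{\mathbf{V}}(\xb^*)+4\|\yb^*-\yb^0\|^2]=\tfrac{18\|\Lb\|^2}{N^2}[(7+8\zeta)\bar{\mathbf{V}}(\xb^*)+\tfrac23\|\yb^*-\yb^0\|^2]$, which is \eqref{bnd_infeas_prob}.

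The main obstacle I anticipate is ensuring that \eqref{bnd_func_prob} and \eqref{bnd_infeas_prob} hold on a \emph{common} event, rather than requiring a union bound that would double the tail probability. This is legitimate precisely because the martingale-difference sum $\tsum(t+1)\langle\delta_i^{t-1,k},\cdot\rangle$ and the conditionally sub-exponential sum $\tsum\|\delta_i^{t-1,k}\|_*^2$ that trigger the deviation estimate are literally identical in \eqref{Q_pert_bnd} (at $\xb=\xb^*$) and in \eqref{xNs_bnd}; once this is in place, what remains is the routine---if slightly tedious---bookkeeping that collapses the constants to $7$, $8$, $18$ and $2/3$ and keeps track of $\sigma^2\le M^2+\sigma^2$ wherever $\sigma$ appears.
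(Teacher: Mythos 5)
Your proposal is correct and follows essentially the same route as the paper: apply Theorem~\ref{thm:main_stoch_prob} with the stated parameters, bound $\mathcal B_d(N)$ and $\mathcal B_p(N)$ exactly as you do (the paper also arrives at $\mathcal B_p(N)\le 8\|\Lb\|\bar{\mathbf{V}}(\xb^*)/N$ via the $\tsum_t(\lambda_t/\tsum_s\lambda_s)^2={\cal O}(1/T_k)$ computation), and for the feasibility residual expand $\|\hat\vb^N\|^2$ with $(a+b+c)^2\le 3(a^2+b^2+c^2)$ and control $\|\hat\xb^N-\xb^{N-1}\|^2$ and $\|\yb^*-\yb^N\|^2$ through \eqref{xNs_bnd}, whose random part coincides with the one already controlled, so the same deviation event serves both bounds. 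Your explicit remark on why no union bound is needed is a point the paper leaves implicit ("Hence, similarly, we have"), but the argument and the resulting constants $7$, $8$, $18$, $2/3$ are identical.
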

\begin{proof}
	Observe that by the definition of $\lambda_t$ in \eqref{eqn:csparam},
	\begin{align*}
	\tsum_{t=1}^{T_k}\left[\tfrac{\theta_k\lambda_t}{\tsum_{t=1}^{T_k}\lambda_t}\right]^2&=\left(\tfrac{2}{T_k(T_k+3)}\right)^2\tsum_{t=1}^{T_k}(t+1)^2\\
	&=\left(\tfrac{2}{T_k(T_k+3)}\right)^2\tfrac{(T_k+1)(T_k+2)(2T_k+3)}{6}\le \tfrac{8}{3T_k},
	\end{align*}
	which together with \eqref{def_Bp} then imply that
		\begin{align*}
		{\cal B}_p(N)
		&\le\tfrac{1}{N}\left\{\sigma\left[2\bar {\mathbf{V}}(\xb^*)\tsum_{k=1}^N\tfrac{8}{3T_k}\right]^{1/2}+\tsum_{k=1}^N\tfrac{8m\sigma^2}{\|\Lb\|(T_k+3)}\right\}\\
		&\le \tfrac{4\|\Lb\|}{N }\left\{\sqrt{\tfrac{\bar {\mathbf{V}}(\xb^*)\tilde D}{3m}}+\tilde D\right\}
		\le \tfrac{8\|\Lb\|\bar {\mathbf{V}}(\xb^*)}{N }.
		\end{align*}
	Hence, \eqref{bnd_func_prob} follows from the above relation, \eqnok{gap_prob} and Proposition~\ref{prop:approx}.
Note that from \eqref{xNs_bnd} and plugging in \eqref{stoch_ns_para} with $\tilde D=\bar {\mathbf{V}}(\xb^*)$, we obtain
		\begin{align*}
		\|\hat \vb^N\|^2=&\left(\tsum_{k=1}^N\theta_k\right)^{-2}\|\hat \vb\|^2\\\nn
		\le&~\left(\tsum_{k=1}^N\theta_k\right)^{-2}\left\{3\theta_N^2\|\Lb\|^2\|\hat \xb^N-\xb^{N-1}\|^2+3\theta_1^2\tau_1^2\left(\|\yb^N-\yb^*\|^2+\|\yb^*-\yb^0\|^2\right)\right\}\\\nn
		\le &~\tfrac{3\|\Lb\|^2}{N^2}\Bigg\{18\mathbf{V}(\xb^0,\xb^*)+4\|\yb^*-\yb^0\|^2\\\nn
		&~~~+ \tsum_{k=1}^N\tsum_{t=1}^{T_k}\tsum_{i=1}^m\tfrac{12\theta_k}{T_k(T_k+3)\|\Lb\|}\left[(t+1)\langle\delta_i^{t-1,k}, x_i^*-u_i^{t-1}\rangle
		+\tfrac{4(M^2+\|\delta_i^{t-1,k}\|_*^2)}{\eta_k}\right]\Bigg\}.
		\end{align*}
	Hence, similarly, we have
		\begin{align*}
		\Prob&\left\{\|\hat \vb^N\|^2\ge \tfrac{18\|\Lb\|^2}{N^2}\left[(7+8\zeta)\bar {\mathbf{V}}(\xb^*)+\tfrac{2}{3}\|\yb^*-\yb^0\|^2\right]\right\}
		\le \exp\{-\zeta^2/3\}+\exp\{-\zeta\},
		\end{align*}
	which in view of Proposition~\ref{prop:approx} immediately implies \eqref{bnd_infeas_prob}.
\end{proof}

\setcounter{equation}{0}
\section{Convergence Analysis}
\label{sec:conv}
This section is devoted to prove the main lemmas in Section~\ref{sec:exact}, \ref{sec:deterministic} and~\ref{sec:stochastic}, which establish the convergence results of the decentralized primal-dual method, the deterministic and stochastic decentralized communication sliding methods, respectively. After introducing some general results about these algorithms, we provide the proofs for Lemma \ref{lem:main_exact}-\ref{lem:main_stoch_sc}
and Theorem \ref{thm:main_stoch_prob}.

The following lemma below characterizes the solution of the primal and dual projection steps
\eqref{eqn:algo2}, \eqref{eqn:algo3} (also \eqref{eqn:algo3-i}, \eqref{eqn:algo5-i}) as well as the projection in inner loop \eqref{eqn:inner1}. The proof of this result can be found in Lemma 2 of \cite{GhaLan10-1}.

\begin{lemma}\label{lem:optimality}
	Let the convex function $q: U \to \mathbb{R}$, the points $\bar{x},\bar{y}\in U$
	and the scalars $\mu_1,\mu_2 \in \mathbb{R}$ be given. Let $\omega: U\rightarrow \bbr$ be a differentiable convex function and $V(x,z)$ be defined in \eqref{eqn:def_V}. If
	\begin{align*}
	u^* \in \argmin\left\{ q(u) + \mu_1V(\bar{x}, u) + \mu_2V(\bar{y},u): u\in U\right\},
	\end{align*}
	then for any $u \in U$, we have
	\begin{align*}
	q(u^*) +\mu_1V(\bar{x},u^*) + \mu_2V(\bar{y},u^*)
	&\le q(u) +\mu_1V(\bar{x},u)+ \mu_2V(\bar{y},u) - (\mu_1 + \mu_2)V(u^*,u).
	\end{align*}
\end{lemma}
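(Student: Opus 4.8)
The plan is to combine the first-order optimality condition for the convex problem defining $u^*$ with the three-point identity for the Bregman distance $V$. First I would record that, for any fixed $z\in U$, the map $u\mapsto V(z,u)$ is differentiable with $\nabla_u V(z,u)=\nabla\omega(u)-\nabla\omega(z)$, which is immediate from \eqref{eqn:def_V}. Since $u^*$ minimizes the convex function $u\mapsto q(u)+\mu_1 V(\bar x,u)+\mu_2 V(\bar y,u)$ over the convex set $U$ (here $\mu_1,\mu_2\ge 0$ so convexity holds), there is a subgradient $q'(u^*)\in\partial q(u^*)$ with
\[
\langle\, q'(u^*)+\mu_1(\nabla\omega(u^*)-\nabla\omega(\bar x))+\mu_2(\nabla\omega(u^*)-\nabla\omega(\bar y)),\ u-u^*\,\rangle\ \ge\ 0,\qquad \forall u\in U.
\]

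Next I would invoke the three-point identity $\langle\nabla\omega(c)-\nabla\omega(b),\,a-c\rangle = V(b,a)-V(b,c)-V(c,a)$, valid for all $a,b,c\in U$, which is a one-line computation from the definition of $V$. Applying it with $(a,b,c)=(u,\bar x,u^*)$ and with $(a,b,c)=(u,\bar y,u^*)$ rewrites the two Bregman-gradient terms in the optimality condition as differences of Bregman distances; after rearranging the resulting inequality becomes
\[
\mu_1 V(\bar x,u^*)+\mu_2 V(\bar y,u^*)+(\mu_1+\mu_2)V(u^*,u)\ \le\ \langle q'(u^*),u-u^*\rangle+\mu_1 V(\bar x,u)+\mu_2 V(\bar y,u).
\]

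Finally I would add $q(u^*)$ to both sides and use convexity of $q$ in the form $q(u^*)+\langle q'(u^*),u-u^*\rangle\le q(u)$ to replace the inner-product term, which yields exactly the claimed inequality once $(\mu_1+\mu_2)V(u^*,u)$ is moved to the right-hand side. The only step requiring a little care is the justification of the first-order optimality condition when $q$ is merely convex rather than differentiable: one appeals to the subdifferential sum rule (applicable since $\mu_1 V(\bar x,\cdot)+\mu_2 V(\bar y,\cdot)$ is differentiable) together with the normal-cone characterization of a minimizer over $U$. I expect that to be the only mildly delicate point; everything else is the routine Bregman three-point manipulation.
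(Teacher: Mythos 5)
Your argument is correct and is essentially the same one the paper relies on: the paper does not reproduce a proof but defers to Lemma 2 of \cite{GhaLan10-1}, whose proof is exactly this combination of the first-order optimality condition for $u^*$ with the Bregman three-point identity $V(z,u)=V(z,u^*)+\langle\nabla\omega(u^*)-\nabla\omega(z),u-u^*\rangle+V(u^*,u)$ and the convexity of $q$. Your handling of the subdifferential sum rule is the right (and only) delicate point, so nothing is missing.
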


We are now ready to provide a proof for Lemma~\ref{lem:main_exact} which establishes the convergence property for the decentralized primal-dual method. Note that this result also builds up the basic recursion for the outer loop of the DCS and SDCS methods.
\vgap

\noindent{\bf Proof of Lemma~\ref{lem:main_exact}:}
	Note that applying Lemma~\ref{lem:optimality} to \eqref{eqn:algo3-i} and \eqref{eqn:algo5-i}, we have
	\[
	\la v_i^k, y_i-y_i^k\ra \le \tfrac{\tau_k}{2}\left[\|y_i-y_i^{k-1}\|^2-\|y_i-y_i^k\|^2-\|y_i^{k-1}-y_i^k\|^2\right], \ \forall y_i\in \bbr^d,
	\]
	\[
	\la w_i^k, x_i^k-x_i\ra + f_i(x_i^k)-f_i(x_i)\le
	\eta_k\left[V_i(x_i^{k-1}, x_i)-V_i(x_i^{k}, x_i)-V_i(x_i^{k-1}, x_i^k)\right], \ \forall x_i\in X_i,
	\]
	which in view of the definition of $Q$ and $\mathbf{V}(\cdot,\cdot)$ in \eqref{eqn:gap} and \eqref{eqn:def_Vb}, respectively, we can obtain
	\begin{align*}
	Q(\xb^k,\yb^k; \zb)
	&= F(\xb^k) - F(\xb)+\la \Lb\xb^k,\yb \ra -\la \Lb \xb, \yb^k\ra\nn\\
	&\le \la \Lb (\xb^k-\tilde \xb^{k}),\yb-\yb^k\ra
	+\eta_k\left[\mathbf{V}(\xb^{k-1}, \xb)-\mathbf{V}(\xb^{k}, \xb)-\mathbf{V}(\xb^{k-1}, \xb^k)\right]\nn\\
	&~~~+\tfrac{\tau_k}{2}\left[\|\yb-\yb^{k-1}\|^2-\|\yb-\yb^k\|^2-\|\yb^{k-1}-\yb^k\|^2\right], \ \forall \zb\in X^m\times \bbr^{md}.
	\end{align*}
    Multiplying both sides of the above inequality by $\theta_k$, and summing the resulted inequality from $k=1$ to $N$, we obtain
    \beq\label{Q_rec}
    \tsum_{k=1}^N\theta_kQ(\xb^k,\yb^k;\zb)\le \tsum_{k=1}^N\theta_k\Delta_k,
    \eeq
    where
    \begin{align}\label{def_Delta}
    \Delta_k:=~&\la \Lb (\xb^k-\tilde \xb^{k}),\yb-\yb^k\ra
    +\eta_k\left[\mathbf{V}(\xb^{k-1}, \xb)-\mathbf{V}(\xb^{k}, \xb)-\mathbf{V}(\xb^{k-1}, \xb^k)\right]\\\nn
    &~~~ + \tfrac{\tau_k}{2}\left[\|\yb-\yb^{k-1}\|^2-\|\yb-\yb^k\|^2-\|\yb^{k-1}-\yb^k\|^2\right].
    \end{align}
    Observe that from the definition of $\tilde \xb ^k$ in \eqref{eqn:algo1}, \eqref{theta_eta} and \eqref{theta_tau}, we have
    \begin{align}\label{Delta_recursion}
    \tsum_{k=1}^N\theta_k\Delta_k
    &=\tsum_{k=1}^N\left[\theta_k\la \Lb(\xb^k-\xb^{k-1}), \yb-\yb^k\ra -\alpha_k\theta_k\la \Lb(\xb^{k-1}-\xb^{k-2}), \yb-\yb^{k-1}\ra\right]\\\nn
    &~~~ -\tsum_{k=1}^N\theta_k\left[\alpha_k\la \Lb(\xb^{k-1}-\xb^{k-2}), \yb^{k-1}-\yb^{k}\ra + \eta_k\mathbf{V}(\xb^{k-1},\xb^k)
    +\tfrac{\tau_k}{2}\|\yb^{k-1}-\yb^k\|^2\right]\\\nn
    &~~~+\tsum_{k=2}^N(\theta_k\eta_k-\theta_{k-1}\eta_{k-1})\mathbf{V}(\xb^{k-1},\xb)
    +\theta_1\eta_1\mathbf{V}(\xb^0,\xb)-\theta_N\eta_N\mathbf{V}(\xb^N,\xb)
    \\\nn
    &~~~+\tsum_{k=2}^N(\tfrac{\theta_k\tau_k}{2}-\tfrac{\theta_{k-1}\tau_{k-1}}{2})\|\yb-\yb^{k-1}\|^2+\tfrac{\theta_1\tau_1}{2}\|\yb-\yb^0\|^2-\tfrac{\theta_N\tau_N}{2}\|\yb-\yb^N\|^2\\\nn
    &\le \tsum_{k=1}^N\left[\theta_k\la \Lb(\xb^k-\xb^{k-1}), \yb-\yb^k\ra -\alpha_k\theta_k\la \Lb(\xb^{k-1}-\xb^{k-2}), \yb-\yb^{k-1}\ra\right]\\\nn
    &~~~ -\tsum_{k=1}^N\theta_k\left[\alpha_k\la \Lb(\xb^{k-1}-\xb^{k-2}), \yb^{k-1}-\yb^{k}\ra + \eta_k\mathbf{V}(\xb^{k-1},\xb^k)
    +\tfrac{\tau_k}{2}\|\yb^{k-1}-\yb^k\|^2\right]\\\nn
    &~~~+\theta_1\eta_1\mathbf{V}(\xb^0,\xb)-\theta_N\eta_N\mathbf{V}(\xb^N,\xb)
    +\tfrac{\theta_1\tau_1}{2}\|\yb-\yb^0\|^2-\tfrac{\theta_N\tau_N}{2}\|\yb-\yb^N\|^2\\\nn
    &\myeqa \theta_N \la \Lb(\xb^N-\xb^{N-1}), \yb-\yb^N\ra -\theta_N\eta_N\mathbf{V}(\xb^{N-1},\xb^N)\\\nn
    &~~~-\tsum_{k=2}^N\left[\theta_k\alpha_k\la \Lb (\xb^{k-1}-\xb^{k-2}),\yb^{k-1}-\yb^k\ra +\theta_{k-1}\eta_{k-1}\mathbf{V}(\xb^{k-2},\xb^{k-1})
    +\tfrac{\theta_k\tau_k}{2}\|\yb^{k-1}-\yb^k\|^2\right]\\\nn
    &~~~+\theta_1\eta_1\mathbf{V}(\xb^0,\xb)-\theta_N\eta_N\mathbf{V}(\xb^N,\xb)
    +\tfrac{\theta_1\tau_1}{2}\|\yb-\yb^0\|^2-\tfrac{\theta_N\tau_N}{2}\|\yb-\yb^N\|^2\\\nn
    &\myeqb \theta_N \la \Lb(\xb^N-\xb^{N-1}), \yb-\yb^N\ra -\theta_N\eta_N\mathbf{V}(\xb^{N-1},\xb^N)\\\nn
    &~~~+\tsum_{k=2}^N\left(\tfrac{\theta_{k-1}\alpha_k\|\Lb\|^2}{2\tau_k}-\tfrac{\theta_{k-1}\eta_{k-1}}{2}\right)\|\xb^{k-2}-\xb^{k-1}\|^2\\\nn
    &~~~+\theta_1\eta_1\mathbf{V}(\xb^0,\xb)-\theta_N\eta_N\mathbf{V}(\xb^N,\xb)
    +\tfrac{\theta_1\tau_1}{2}\|\yb-\yb^0\|^2-\tfrac{\theta_N\tau_N}{2}\|\yb-\yb^N\|^2\\\nn
    &\myeqc \theta_N \la \Lb(\xb^N-\xb^{N-1}), \yb-\yb^N\ra -\theta_N\eta_N\mathbf{V}(\xb^{N-1},\xb^N)\\\nn
    &~~~+\theta_1\eta_1\mathbf{V}(\xb^0,\xb)-\theta_N\eta_N\mathbf{V}(\xb^N,\xb)
    +\tfrac{\theta_1\tau_1}{2}\|\yb-\yb^0\|^2-\tfrac{\theta_N\tau_N}{2}\|\yb-\yb^N\|^2\\\nn
    &\myeqd \theta_N \la \yb^N, \Lb(\xb^{N-1}-\xb^N)\ra -\theta_N\eta_N\mathbf{V}(\xb^{N-1},\xb^N)-\tfrac{\theta_1\tau_1}{2}\|\yb^N\|^2\\\nn
    &~~~+\theta_1\eta_1\mathbf{V}(\xb^0,\xb)+\tfrac{\theta_1\tau_1}{2}\|\yb^0\|^2
    +\la \yb, \theta_N\Lb(\xb^N-\xb^{N-1}) + \theta_1\tau_1(\yb^N-\yb^0)\ra,\\\nn
    &\myeqe \left(\tfrac{\theta_N\|\Lb\|^2}{2\eta_N }-\tfrac{\theta_1\tau_1}{2}\right)\|\yb^N\|^2
    +\theta_1\eta_1\mathbf{V}(\xb^0,\xb)+\tfrac{\theta_1\tau_1}{2}\|\yb^0\|^2
    +\la \yb, \theta_N\Lb(\xb^N-\xb^{N-1}) + \theta_1\tau_1(\yb^N-\yb^0)\ra,
    \end{align}
    where (a) follows from \eqref{alpha_theta} and the fact that $\xb^{-1} = \xb^0$,
    (b) follows from the simple relation that $b\la u,v\ra -a\|v\|^2/2\le b^2\|u\|^2/(2a), \forall a>0$, \eqref{alpha_theta} and \eqref{eqn:def_V_s},
    (c) follows from \eqref{eta_tau_L_k},
    (d) follows from \eqref{eta_tau}, $\|\yb-\yb^0\|^2 - \|\yb-\yb^N\|^2 = \|\yb^0\|^2 - \|\yb^N\|^2 -2 \la \yb, \yb^0-\yb^N\ra$ and arranging the terms accordingly,
    (e) follows from \eqref{eqn:def_V_s} and the relation $b\la u,v\ra -a\|v\|^2/2\le b^2\|u\|^2/(2a), \forall a>0$.
    The desired result in \eqref{outer_recursion} then follows from this relation, \eqref{eta_tau_theta}, \eqref{Q_rec} and the convexity of $Q$.

	Furthermore, from \eqref{Delta_recursion}(c), \eqref{eqn:def_V_s} and the fact that $\tsum_{k=1}^N\theta_kQ(\xb^k,\yb^k;\zb^*)\ge 0$, if we fix $\zb=\zb^*=(\xb^*,\yb^*)$ in the above relation, we have
	\begin{align*}
	\tfrac{\theta_N\tau_N }{2}\|\xb^{N-1}-\xb^{N}\|^2
	&\le \theta_N \la \Lb(\xb^N-\xb^{N-1}), \yb^*-\yb^N\ra -\tfrac{\theta_N\tau_N}{2}\|\yb^*-\yb^N\|^2
	+\theta_1\eta_1\mathbf{V}(\xb^0,\xb^*)+\tfrac{\theta_1\tau_1}{2}\|\yb^*-\yb^0\|^2\\
	&\le \tfrac{\theta_N\|\Lb\|^2}{2\tau_N}\|\xb^{N-1}-\xb^N\|^2+\theta_1\eta_1\mathbf{V}(\xb^0,\xb^*)+\tfrac{\theta_1\tau_1}{2}\|\yb^*-\yb^0\|^2,
	\end{align*}
	Similarly, we obtain
	\begin{align*}
	\tfrac{\theta_N\tau_N}{2}\|\yb^*-\yb^N\|^2
	&\le \tfrac{\theta_N\|\Lb\|^2}{2\eta_N }\|\yb^*-\yb^N\|^2+\theta_1\eta_1\mathbf{V}(\xb^0,\xb^*)+\tfrac{\theta_1\tau_1}{2}\|\yb^*-\yb^0\|^2,
	\end{align*}
	from which the desired result in \eqref{xNs_bnd_pd} follows.
    \hfill $\square$

Before we provide proofs for the remaining lemmas, 
we first need to present a result which summarizes an important convergence property of the \SGD~procedure.
It needs to be mentioned that the following proposition states a general result holds for \SGD~procedure performed by individual agent $i \in \Vc$. For notation convenience, we use the notations defined in \SGD~procedure (cf. Algorithm~\ref{alg:DCS}).

\begin{proposition}\label{prop:inner}
	If $\{\b_t\}$ and $\{\lambda_t\}$ in the \SGD~procedure satisfy
	\beq\label{beta_w}
	\lambda_{t+1}(\eta\beta_{t+1}-\mu/\mathcal{C})\le \lambda_{t}(1+\beta_{t})\eta, \ \forall t\ge 1.
	\eeq
	then, for $t \ge 1$ and $u \in U$,
	\begin{align}\label{inner_recursion}
	&(\tsum_{t=1}^T\lambda_t)^{-1}\left[\eta(1+\beta_T)\lambda_TV(u^T,u) + \tsum_{t=1}^T\lambda_t\langle\delta^{t-1}, u-u^{t-1}\rangle\right] + \Phi(\hat{u}^T) - \Phi(u)\\\nn
	&~\le  (\tsum_{t=1}^T\lambda_t)^{-1}\left[(\eta\beta_1-\mu/\mathcal{C})\lambda_1V(u^0,u)
	+ \tsum_{t=1}^T\tfrac{(M+\|\delta^{t-1}\|_*)^2\lambda_t}{2 \eta\beta_t}\right],
	\end{align}
	where $\Phi$ is defined as
\begin{align}\label{eqn:defphi}
\Phi(u) := \la w, u\ra + \phi(u) + \eta V(x,u)
\end{align}
 and $\delta^t:=\phi'(u^t)-h^t$.
\end{proposition}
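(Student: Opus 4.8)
The plan is to extract a one-step inequality from the optimality of the inner projection, then form the weighted sum over $t=1,\dots,T$ and telescope. First I would apply Lemma~\ref{lem:optimality} to the subproblem $\eqref{eqn:inner1}$ (equivalently $\eqref{eqn:inner1-s}$) with $q(\cdot)=\langle w+h^{t-1},\cdot\rangle$, $\mu_1=\eta$, $\bar x=x$, $\mu_2=\eta\beta_t$ and $\bar y=u^{t-1}$, obtaining for every $u\in U$
\begin{align*}
\langle w+h^{t-1}, u^t-u\rangle &+ \eta[V(x,u^t)-V(x,u)]\\
&\le \eta\beta_t[V(u^{t-1},u)-V(u^{t-1},u^t)] - \eta(1+\beta_t)V(u^t,u).
\end{align*}
Substituting $h^{t-1}=\phi'(u^{t-1})-\delta^{t-1}$ and splitting both $\langle\phi'(u^{t-1}),u^t-u\rangle$ and $\langle\delta^{t-1},u^t-u\rangle$ across $u^{t-1}$, I would bound $\langle\phi'(u^{t-1}),u^{t-1}-u\rangle\ge\phi(u^{t-1})-\phi(u)+\tfrac{\mu}{2}\|u^{t-1}-u\|^2$ by the (strongly) convex inequality in $\eqref{eqn:nonsmooth}$.

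The crucial step is the shift of the index of $\phi$ from $u^{t-1}$ to $u^t$: the upper bound in $\eqref{eqn:nonsmooth}$ gives $\langle\phi'(u^{t-1}),u^t-u^{t-1}\rangle+\phi(u^{t-1})\ge\phi(u^t)-M\|u^t-u^{t-1}\|$, and combining it with the Cauchy--Schwarz bound $|\langle\delta^{t-1},u^t-u^{t-1}\rangle|\le\|\delta^{t-1}\|_*\|u^t-u^{t-1}\|$ is precisely what produces the factor $M+\|\delta^{t-1}\|_*$ rather than $M$ alone. After these substitutions, and recalling the definition $\eqref{eqn:defphi}$ of $\Phi$, the one-step estimate becomes
\begin{align*}
\Phi(u^t)-\Phi(u) &\le (M+\|\delta^{t-1}\|_*)\|u^t-u^{t-1}\| - \tfrac{\mu}{2}\|u^{t-1}-u\|^2 + \langle\delta^{t-1},u^{t-1}-u\rangle\\
&\quad + \eta\beta_t V(u^{t-1},u) - \eta\beta_t V(u^{t-1},u^t) - \eta(1+\beta_t)V(u^t,u).
\end{align*}
I would then invoke Young's inequality $(M+\|\delta^{t-1}\|_*)\|u^t-u^{t-1}\|\le\tfrac{(M+\|\delta^{t-1}\|_*)^2}{2\eta\beta_t}+\eta\beta_t V(u^{t-1},u^t)$, which uses that $V$ is strongly convex with modulus $1$ (cf. $\eqref{eqn:def_V_i_s}$) and cancels the $-\eta\beta_t V(u^{t-1},u^t)$ term, together with the quadratic growth bound $-\tfrac{\mu}{2}\|u^{t-1}-u\|^2\le-\tfrac{\mu}{\mathcal{C}}V(u^{t-1},u)$ from $\eqref{eqn:proxquad}$ (vacuous when $\mu=0$), so that
\begin{align*}
\Phi(u^t)-\Phi(u) \le \tfrac{(M+\|\delta^{t-1}\|_*)^2}{2\eta\beta_t} + \langle\delta^{t-1},u^{t-1}-u\rangle + \left(\eta\beta_t-\tfrac{\mu}{\mathcal{C}}\right)V(u^{t-1},u) - \eta(1+\beta_t)V(u^t,u).
\end{align*}

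Finally I would multiply by $\lambda_t$, sum over $t=1,\dots,T$, and telescope: the coefficient of $V(u^s,u)$ for $1\le s\le T-1$ is $\lambda_{s+1}(\eta\beta_{s+1}-\mu/\mathcal{C})-\lambda_s\eta(1+\beta_s)$, which is nonpositive by the hypothesis $\eqref{beta_w}$, so these terms can be discarded (since $V\ge0$), leaving only $\lambda_1(\eta\beta_1-\mu/\mathcal{C})V(u^0,u)$ and $-\eta(1+\beta_T)\lambda_T V(u^T,u)$. Dividing by $\sum_{t=1}^T\lambda_t$, using convexity of $\Phi$ with $\eqref{eqn:inner2}$ to get $\Phi(\hat u^T)\le(\sum_{t=1}^T\lambda_t)^{-1}\sum_{t=1}^T\lambda_t\Phi(u^t)$, and moving the $V(u^T,u)$ and $\delta^{t-1}$ terms to the left-hand side yields exactly $\eqref{inner_recursion}$. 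I expect the index shift of the $\phi$-terms to be the main obstacle --- it is what forces the appearance of $M+\|\delta^{t-1}\|_*$ --- while the Young estimate, the use of $\eqref{eqn:proxquad}$, and the $\eqref{beta_w}$-driven telescoping are routine bookkeeping.
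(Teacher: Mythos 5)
Your proposal is correct and follows essentially the same route as the paper's proof: the same application of Lemma~\ref{lem:optimality} to \eqref{eqn:inner1}, the same index shift of $\phi$ from $u^{t-1}$ to $u^t$ via the two-sided bound \eqref{eqn:nonsmooth} producing the $M+\|\delta^{t-1}\|_*$ factor, the same Young/strong-convexity estimate yielding $\tfrac{(M+\|\delta^{t-1}\|_*)^2}{2\eta\beta_t}$, the same use of \eqref{eqn:proxquad}, and the same $\lambda_t$-weighted telescoping under \eqref{beta_w}. The only difference is the order in which the subgradient inequality and the projection lemma are invoked, which is purely presentational.
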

\begin{proof}
	Noticing that $\phi := f_i$ in the CS procedure, we have by \eqref{eqn:nonsmooth}
	\begin{align*}
	\phi(u^t)
	\le &~ \phi(u^{t-1}) + \la  \phi'(u^{t-1}), u^t - u^{t-1}\ra + M \|u^t-u^{t-1}\|\\
	= &~ \phi(u^{t-1}) + \la  \phi'(u^{t-1}), u - u^{t-1}\ra + \la \phi'(u^{t-1}), u^t - u\ra + M \|u^t-u^{t-1}\|\\
	\le &~ \phi(u) - \tfrac{\mu}{2}\|u-u^{t-1}\|^2 + \la  \phi'(u^{t-1}), u^t - u\ra + M \|u^t-u^{t-1}\|,
	\end{align*}
where $\phi'(u^{t-1})\in \partial \phi(u^{t-1})$ and $\partial \phi(u^{t-1})$ denotes the subdifferential of $\phi$ at $u^{t-1}$.
	By applying Lemma \ref{lem:optimality} to \eqref{eqn:inner1}, we obtain
	\begin{align*}
	&\la w + h^{t-1}, u^t-u \ra +\eta V(x, u^t) -\eta V(x,u)
	\le  \eta\b_tV(u^{t-1},u)-\eta(1+\b_t)V(u^t,u)-\eta\b_tV(u^{t-1},u^{t}),\ \forall u\in U.
	\end{align*}
	Combining the above two relations together with \eqref{eqn:proxquad}
	\footnote{Observed that we only need condition \eqref{eqn:proxquad} when $\mu>0$, in other words, the objective functions $f_i$'s are strongly convex.}, we conclude that
	\begin{align}\label{eqn:opt_1}
	&\la w ,u^t- u \ra + \phi(u^t)- \phi(u) + \la \delta^{t-1}, u-u^{t-1}\ra +\eta V(x,u^t)- \eta V(x,u)\\\nn
	&~\le  (\eta\b_t-\mu/\mathcal{C}) V(u^{t-1},u)-\eta(1+\b_t)V(u^t,u)
	+ \la \delta^{t-1},u^t-u^{t-1}\ra
	+ M\|u^t-u^{t-1}\|- \eta\beta_tV(u^{t-1},u^{t}), \ \forall u\in U.
	\end{align}
	Moreover, by Cauchy-Schwarz inequality, \eqref{eqn:def_V_i_s}, and the simple fact that $-at^2/2 + bt \le b^2/(2a)$ for any $a>0$, we have
	\begin{align*}
	&\la \delta^{t-1},u^t-u^{t-1}\ra + M\|u^t-u^{t-1}\|- \eta\beta_tV(u^{t-1},u^{t})
	\le ( \|\delta^{t-1}\|_* + M)\|u^t-u^{t-1}\|- \tfrac{\eta\beta_t }{2}\|u^t-t^{t-1}\|^2\le \tfrac{(M+\|\delta^{t-1}\|_*)^2}{2 \eta\b_t}.
	\end{align*}
	From the above relation  and the definition of $\Phi(u)$ in \eqref{eqn:defphi}, we can rewrite \eqref{eqn:opt_1} as,
	\begin{align*}
	\Phi(u^t) - \Phi(u)&+\la \delta^{t-1}, u-u^{t-1}\ra
	\le   (\eta\b_t-\mu/\mathcal{C})V(u^{t-1},u)-\eta(1+\b_t)V(u^t,u)
	+ \tfrac{(M+\|\delta^{t-1}\|_*)^2}{2 \eta\b_t}, \ \forall u\in U.
	\end{align*}
	Multiplying both sides by $\lambda_t$ and summing up the resulting inequalities from $t=1$ to $T$, we obtain
	\begin{align*}
	&\tsum_{t=1}^T\lambda_t\left[\Phi(u^t) - \Phi(u)+\la \delta^{t-1}, u-u^{t-1}\ra\right]
	\le   \tsum_{t=1}^T\left[(\eta\b_t-\mu/\mathcal{C})\lambda_tV(u^{t-1},u)-\eta(1+\beta_t)\lambda_tV(u^t,u)\right]
	+ \tsum_{t=1}^T\tfrac{(M+\|\delta^{t-1}\|_*)^2\lambda_t}{2 \eta\b_t}.
	\end{align*}
	Hence, in view of \eqref{beta_w}, the convexity of $\Phi$ and the definition of $\hat u^T$ in \eqref{eqn:inner2}, we have
	\begin{align*}
	\Phi(\hat u^T) - \Phi(u)+ &(\tsum_{t=1}^T\lambda_t)^{-1}\tsum_{t=1}^T\lambda_t\la \delta^{t-1}, u-u^{t-1}\ra\\
	&~\le (\tsum_{t=1}^T\lambda_t)^{-1} \left[(\eta\beta_1-\mu/\mathcal{C})\lambda_1V(u^0,u)-\eta(1+\beta_T)\lambda_TV(u^T,u)
	+ \tsum_{t=1}^T\tfrac{(M+\|\delta^{t-1}\|_*)^2\lambda_t}{2 \eta\b_t}\right],
	\end{align*}
	which implies \eqref{inner_recursion} immediately.
\end{proof}

As a matter of fact, the SDCS method covers the DCS method as a special case when $\delta^t = 0,\ \forall t\ge0$. Therefore, we investigate the proofs for Lemma~\ref{lem:main_stoch} and \ref{lem:main_stoch_sc}  first and then simplify the proofs for Lemma~\ref{lem:main_deter} and \ref{lem:main_deter_sc}.

\vgap
We now provide a proof for Lemma~\ref{lem:main_stoch}, which establishes the convergence property of SDCS method for solving general convex problems.
\vgap

\noindent{\bf Proof of Lemma~\ref{lem:main_stoch}}\\
When $f_i$, $i=1,\dots,m$, are general convex functions, we have $\mu=0$ and $M>0$ (cf. \eqref{eqn:nonsmooth}). Therefore, in view of $\phi := f_i$, and $\lambda_t$ and $\beta_t$ defined in \eqref{eqn:csparam} satisfying condition \eqref{beta_w} in the CS procedure, equation \eqref{inner_recursion} can be rewritten as the following,\footnote{We added the subscript $i$ to emphasize that this inequality holds for any agent $i \in \Vc$ with $\phi = f_i$. More specifically, $\Phi_i(u_i) := \la w_i, u_i\ra + f_i(u_i) + \eta V_i(x_i,u_i)$.}
	\begin{align*}
	&(\tsum_{t=1}^T\lambda_t)^{-1}\left[\eta(1+\beta_T)\lambda_TV_i(u_i^T,u_i) + \tsum_{t=1}^T\lambda_t\langle\delta_i^{t-1}, u_i-u_i^{t-1}\rangle\right] + \Phi_i(\hat{u}_i^T) - \Phi_i(u_i)\\\nn
	&~\le  (\tsum_{t=1}^T\lambda_t)^{-1}\left[\eta\beta_1\lambda_1V_i(u_i^0,u_i)
	+ \tsum_{t=1}^T\tfrac{(M+\|\delta_i^{t-1}\|_*)^2\lambda_t}{2 \eta\beta_t}\right], \ \forall u_i\in X_i.
	\end{align*}
In view of the above relation, the definition of $\Phi^k$ in \eqref{eqn:defPhi_k}, and the input and output settings in the \SGD~procedure, it is not difficult to see that, for any $k \ge 1$,\footnote{We added the superscript $k$ in $\delta_i^{t-1,k}$ to emphasize that this error is generated at the $k$-th outer loop.}
	\begin{align*}
	\Phi^k(\hat{\xb}^k) - \Phi^k(\xb) &~+(\tsum_{t=1}^{T_k}\lambda_{t})^{-1}\left[\eta_k(1+\beta_{T_k})\lambda_{T_k}\mathbf{V}(\xb^k,\xb)+\tsum_{t=1}^{T_k}\tsum_{i=1}^m\lambda_t\langle\delta_i^{t-1,k}, x_i-u_i^{t-1}\rangle\right] \\
	&~~~\le  (\tsum_{t=1}^{T_k}\lambda_t)^{-1}\left[\eta_k\beta_1\lambda_1\mathbf{V}(\xb^{k-1},\xb)
	+ \tsum_{t=1}^{T_k}\tsum_{i=1}^m\tfrac{(M+\|\delta_i^{t-1,k}\|_*)^2\lambda_t}{2 \eta_k\beta_t}\right],\ \forall \xb\in X^m.
	\end{align*}
	By plugging into the above relation the values of $\lambda_t$ and $\beta_t$ in \eqref{eqn:csparam}, together with the definition of $\Phi^k$ in \eqref{eqn:defPhi_k} and rearranging the terms, we have,
	\begin{align*}
	\la \Lb(\hat \xb^k-\xb), \yb^k\ra + F(\hat \xb^k)- F(\xb)
	&\le \tfrac{(T_k+1)(T_k+2)\eta_k}{T_k(T_k+3)}\left[\mathbf{V}(\xb^{k-1},\xb) - \mathbf{V}(\xb^k,\xb)\right]
	-\eta_k\mathbf{V}(\xb^{k-1},\hat \xb^k)\\
	&~+\tfrac{2}{T_k(T_k+3)}\tsum_{t=1}^{T_k}\tsum_{i=1}^m\left[(t+1)\langle\delta_i^{t-1,k}, x_i-u_i^{t-1}\rangle
	+\tfrac{2(M+\|\delta_i^{t-1,k}\|_*)^2}{\eta_k}\right],
	\ \forall \xb\in X^m.
	\end{align*}
	Moreover, applying Lemma~\ref{lem:optimality} to \eqref{eqn:algo3-cs}, we have, for $k\ge 1$,
	\beq\label{dual_opt}
	\la v_i^k, y_i-y_i^k\ra \le \tfrac{\tau_k}{2}\left[\|y_i-y_i^{k-1}\|^2-\|y_i-y_i^k\|^2-\|y_i^{k-1}-y_i^k\|^2\right], \ \forall y_i\in \bbr^d,
	\eeq
	which in view of the definition of $Q$ in \eqref{eqn:gap} and the above two relations, then implies that, for $k\ge 1$, $\zb\in X^m\times \bbr^{md}$,
	\begin{align*}
	Q(\hat \xb^k,\yb^k; \zb)
	&= F(\hat \xb^k) - F(\xb)+\la \Lb\hat \xb^k,\yb \ra -\la \Lb \xb, \yb^k\ra\\
	&\le \la \Lb (\hat \xb^k-\tilde \xb^{k}),\yb-\yb^k\ra
	+\tfrac{(T_k+1)(T_k+2)\eta_k}{T_k(T_k+3)}\left[\mathbf{V}(\xb^{k-1},\xb)-\mathbf{V}(\xb^k,\xb)\right]\\
	&~~~-\eta_k\mathbf{V}(\xb^{k-1},\hat \xb^k) +\tfrac{\tau_k}{2}\left[\|\yb-\yb^{k-1}\|^2-\|\yb-\yb^k\|^2
	-\|\yb^{k-1}-\yb^k\|^2\right]\\
	&~~~+\tfrac{2}{T_k(T_k+3)}\tsum_{t=1}^{T_k}\tsum_{i=1}^m\left[(t+1)\langle\delta_i^{t-1,k}, x_i-u_i^{t-1}\rangle
	+\tfrac{2(M+\|\delta_i^{t-1,k}\|_*)^2}{\eta_k}\right].
	\end{align*}
	Multiplying both sides of the above inequality by $\theta_k$, and summing up the resulting inequalities from $k=1$ to $N$, we obtain, for all $\zb \in X^m\times \bbr^{md}$,
	\begin{align}\label{Q_tilde_D}
	\tsum_{k=1}^N\theta_kQ(\hat \xb^k,\yb^k;\zb)
	&\le \tsum_{k=1}^N\theta_k\tilde \Delta_k + \tsum_{k=1}^N\tsum_{t=1}^{T_k}\tsum_{i=1}^m\tfrac{2\theta_k}{T_k(T_k+3)}
	\left[(t+1)\langle\delta_i^{t-1,k}, x_i-u_i^{t-1}\rangle+\tfrac{2(M+\|\delta_i^{t-1,k}\|_*)^2}{\eta_k}\right],
	\end{align}
	where
	\begin{align}\label{def_tDelta}
	\tilde \Delta_k:=&\la \Lb (\hat \xb^k-\tilde \xb^{k}),\yb-\yb^k\ra
	+\tfrac{(T_k+1)(T_k+2)\eta_k}{T_k(T_k+3)}\left[\mathbf{V}(\xb^{k-1},\xb)-\mathbf{V}(\xb^k,\xb)\right]\\\nn
	&~~~-\eta_k\mathbf{V}(\xb^{k-1},\hat \xb^k)+\tfrac{\tau_k}{2}\left[\|\yb-\yb^{k-1}\|^2-\|\yb-\yb^k\|^2-\|\yb^{k-1}-\yb^k\|^2\right].
	\end{align}
	Since $\tilde \Delta_k$ in \eqref{def_tDelta} shares a similar structure with $\Delta_k$ in \eqref{def_Delta} (with $\xb^k$ in the first and the fourth terms being replaced by $\hat\xb^k$),
	we can follow the procedure in \eqref{Delta_recursion} to simplify the RHS of \eqref{Q_tilde_D}. The only difference is in the coefficient of the term $[\mathbf{V}(\xb^{k-1},\xb) - \mathbf{V}(\xb^k,\xb)]$. Hence, by using condition \eqref{theta_eta_d} in place of \eqref{theta_eta}, 
we obtain
	\begin{align}\label{Q_bnd}
	\tsum_{k=1}^N\theta_kQ(\hat \xb^k,\yb^k;\zb)
	&\le \tsum_{k=1}^N\theta_k\tilde \Delta_k + \tsum_{k=1}^N\tsum_{t=1}^{T_k}\tsum_{i=1}^m\tfrac{2\theta_k}{T_k(T_k+3)}
	\left[(t+1)\langle\delta_i^{t-1,k}, x_i-u_i^{t-1}\rangle+\tfrac{2(M+\|\delta_i^{t-1,k}\|_*)^2}{\eta_k}\right]\\\nn
	&\le \tfrac{(T_1+1)(T_1+2)\theta_1\eta_1}{T_1(T_1+3)}\mathbf{V}(\xb^0,\xb)+\tfrac{\theta_1\tau_1}{2}\|\yb^0\|^2+\la\hat \vb,\yb\ra\\\nn
	&~~~+ \tsum_{k=1}^N\tsum_{t=1}^{T_k}\tsum_{i=1}^m\tfrac{2\theta_k}{T_k(T_k+3)}\left[(t+1)\langle\delta_i^{t-1,k}, x_i-u_i^{t-1}\rangle
	+\tfrac{4(M^2+\|\delta_i^{t-1,k}\|_*^2)}{\eta_k}\right], \ \forall \zb \in X^m\times \bbr^{md},
	\end{align}
where
\begin{align}\label{eqn:hatvb}
\hat\vb := \theta_N \Lb (\hat \xb^N - \xb^{N-1}) + \theta_1\tau_1 (\yb^N - \yb^0).
\end{align}
Our result in \eqref{Q_pert_bnd} immediately follows from the convexity of $Q$.

	Furthermore, in view of \eqref{Delta_recursion}(c) and \eqref{Q_tilde_D}, we can obtain the following similar result (with $\xb^N$ in the first and the second terms of the RHS being replaced with $\hat\xb^N$),
	\begin{align*}
	\tsum_{k=1}^N\theta_kQ(\hat \xb^k,\yb^k;\zb)
	&\le \theta_N \la \Lb(\hat \xb^N-\xb^{N-1}), \yb-\yb^N\ra -\theta_N\eta_N\mathbf{V}(\xb^{N-1},\hat \xb^N)\\\nn
	&~~~+\tfrac{(T_1+1)(T_1+2)\theta_1\eta_1}{T_1(T_1+3)}\mathbf{V}(\xb^0,\xb)-\tfrac{(T_N+1)(T_N+2)\theta_N\eta_N}{T_N(T_N+3)}\mathbf{V}(\xb^N,\xb)
	+\tfrac{\theta_1\tau_1}{2}\|\yb-\yb^0\|^2-\tfrac{\theta_N\tau_N}{2}\|\yb-\yb^N\|^2\\\nn
	&~~~+ \tsum_{k=1}^N\tsum_{t=1}^{T_k}\tsum_{i=1}^m\tfrac{\theta_k}{T_k(T_k+3)}\left[(t+1)\langle\delta_i^{t-1,k}, x_i-u_i^{t-1}\rangle
	+\tfrac{4(M^2+\|\delta_i^{t-1,k}\|_*^2)}{\eta_k}\right].
	\end{align*}
	 Therefore, in view of the fact that $\tsum_{k=1}^N\theta_kQ(\hat\xb^k,\yb^k;\zb^*)\ge 0$ for any saddle point $\zb^*=(\xb^*,\yb^*)$ of \eqref{eqn:saddle}, and \eqref{eqn:def_V_s}, by fixing $\zb=\zb^*$ and rearranging terms, we obtain
	\begin{align}\label{bnd_xNs_ns}
	\tfrac{\theta_N\eta_N }{2}\|\hat \xb^N-\xb^{N-1}\|^2
	&\le \theta_N \la \Lb(\hat \xb^N-\xb^{N-1}), \yb^*-\yb^N\ra -\tfrac{\theta_N\tau_N}{2}\|\yb^*-\yb^N\|^2\\\nn
	&~~~+\tfrac{(T_1+1)(T_1+2)\theta_1\eta_1}{T_1(T_1+3)}\mathbf{V}(\xb^0, \xb^*)+\tfrac{\theta_1\tau_1}{2}\|\yb^*-\yb^0\|^2\\\nn
	&~~~+ \tsum_{k=1}^N\tsum_{t=1}^{T_k}\tsum_{i=1}^m\tfrac{2\theta_k}{T_k(T_k+3)}\left[(t+1)\langle\delta_i^{t-1,k}, x_i^*-u_i^{t-1}\rangle
	+\tfrac{4(M^2+\|\delta_i^{t-1,k}\|_*^2)}{\eta_k}\right]\\\nn
	&\le \tfrac{\theta_N\|\Lb\|^2}{2\tau_N}\|\hat \xb^N-\xb^{N-1}\|^2 +\tfrac{(T_1+1)(T_1+2)\theta_1\eta_1}{T_1(T_1+3)}\mathbf{V}(\xb^0, \xb^*)
    +\tfrac{\theta_1\tau_1}{2}\|\yb^*-\yb^0\|^2\\\nn
	&~~~+ \tsum_{k=1}^N\tsum_{t=1}^{T_k}\tsum_{i=1}^m\tfrac{2\theta_k}{T_k(T_k+3)}\left[(t+1)\langle\delta_i^{t-1,k}, x_i^*-u_i^{t-1}\rangle
	+\tfrac{4(M^2+\|\delta_i^{t-1,k}\|_*^2)}{\eta_k}\right],
	\end{align}
	where the second inequality follows from the relation $b\la u,v\ra -a\|v\|^2/2\le b^2\|u\|^2/(2a), \forall a>0$.
	
	Similarly, we obtain
	\begin{align}\label{bnd_yNs_ns}
	\tfrac{\theta_N\tau_N}{2}\|\yb^*-\yb^N\|^2
	&\le \tfrac{\theta_N\|\Lb\|^2}{2\eta_N }\|\yb^*-\yb^N\|^2+\tfrac{(T_1+1)(T_1+2)\theta_1\eta_1}{T_1(T_1+3)}\mathbf{V}(\xb^0, \xb^*)
    +\tfrac{\theta_1\tau_1}{2}\|\yb^*-\yb^0\|^2\\\nn
	&~~~+ \tsum_{k=1}^N\tsum_{t=1}^{T_k}\tsum_{i=1}^m\tfrac{2\theta_k}{T_k(T_k+3)}\left[(t+1)\langle\delta_i^{t-1,k}, x_i^*-u_i^{t-1}\rangle
	+\tfrac{4(M^2+\|\delta_i^{t-1,k}\|_*^2)}{\eta_k}\right],
	\end{align}
	from which the desired result in \eqref{xNs_bnd} follows.
\\

The following proof of Lemma~\ref{lem:main_stoch_sc} establishes the convergence of SDCS method for solving strongly convex problems.

\vgap
	
\noindent{\bf Proof of Lemma~\ref{lem:main_stoch_sc}}\\
When $f_i$, $i=1,\dots,m$, are strongly convex functions, we have $\mu,\ M>0$ (cf. \eqref{eqn:nonsmooth}). Therefore, in view of Proposition~\ref{prop:inner} with $\lambda_t$ and $\beta_t$ defined in \eqref{eqn:csparam_sc} satisfying condition \eqref{beta_w}, the definition of $\Phi^k$ in \eqref{eqn:defPhi_k}, and the input and output settings in the \SGD~procedure, we have for all $k \ge 1$
	\begin{align}
	\Phi^k(\hat{\xb}^k) - \Phi^k(\xb)
	&~+(\tsum_{t=1}^{T_k}\lambda_t)^{-1}\left[\eta_k(1+\beta^{(k)}_{T_k})\lambda_{T_k}\mathbf{V}(\xb^k,\xb) + \tsum_{t=1}^{T_k}\tsum_{i=1}^m\lambda_t\langle\delta_i^{t-1,k}, x_i-u_i^{t-1}\rangle\right] \\\nn
	&~~~\le  (\tsum_{t=1}^{T_k}\lambda_t)^{-1}\left[(\eta_k\beta^{(k)}_1-\mu/\mathcal{C})\lambda_1\mathbf{V}(\xb^{k-1},\xb)
	+ \tsum_{t=1}^{T_k}\tsum_{i=1}^m\tfrac{(M+\|\delta_i^{t-1,k}\|_*)^2\lambda_t}{2 \eta_k\beta_t}\right], \ \forall \xb\in X^m.
	\end{align}
	By plugging into the above relation the values of $\lambda_t$ and $\beta^{(k)}_t$ in \eqref{eqn:csparam_sc}, together with the definition of $\Phi^k$ in \eqref{eqn:defPhi_k} and rearranging the terms, we have
	\begin{align*}
	\la \Lb(\hat \xb^k-\xb), \yb^k\ra + F(\hat \xb^k)- F(\xb)
	&~\le \eta_k\mathbf{V}(\xb^{k-1},\xb) -(\mu/\mathcal{C}+\eta_k)\mathbf{V}(\xb^k,\xb)-\eta_k\mathbf{V}(\xb^{k-1},\hat \xb^k)\\
	&~~~+\tfrac{2}{T_k(T_k+1)}\tsum_{t=1}^{T_k}\tsum_{i=1}^m\left[t\langle\delta_i^{t-1,k}, x_i- u_i^{t-1}\rangle
	+ \tfrac{(M+\|\delta_i^{t-1,k}\|_*)^2t}{(t+1)\mu/\mathcal{C}+(t-1)\eta_k}\right], \, \forall \xb\in X^m, \, k\ge 1.
	\end{align*}
In view of \eqref{dual_opt}, the above relation and the definition of Q in \eqref{eqn:gap}, and following the same trick that we used to obtain \eqref{Q_tilde_D}, we have, for all $\zb\in X^m\times \bbr^{md}$,
	\begin{align}\label{Q_bar_D}
	\tsum_{k=1}^N\theta_kQ(\hat \xb^k,\yb^k;\zb)
	&\le \tsum_{k=1}^N\theta_k\bar \Delta_k
	+ \tsum_{k=1}^N\tsum_{t=1}^{T_k}\tsum_{i=1}^m\tfrac{2\theta_k}{T_k(T_k+1)}\left[t\langle\delta_i^{t-1,k}, x_i-u_i^{t-1}\rangle
	+ \tfrac{(M+\|\delta_i^{t-1,k}\|_*)^2t}{(t+1)\mu/\mathcal{C}+(t-1)\eta_k}\right],
	\end{align}
	where
	\begin{align}\label{def_bDelta}
	\bar \Delta_k:=&\Lb (\hat \xb^k-\tilde \xb^{k}),\yb-\yb^k\ra
	+\eta_k\mathbf{V}(\xb^{k-1},\xb)-(\mu/\mathcal{C}+\eta_k)\mathbf{V}(\xb^k,\xb)-\eta_k\mathbf{V}(\xb^{k-1},\hat \xb^k)\\\nn
	&~~~+\tfrac{\tau_k}{2}\left[\|\yb-\yb^{k-1}\|^2-\|\yb-\yb^k\|^2-\|\yb^{k-1}-\yb^k\|^2\right].
	\end{align}
	Since $\bar \Delta_k$ in \eqref{def_bDelta} shares a similar structure with $\tilde \Delta_k$ in \eqref{def_tDelta} (also $\Delta_k$ in \eqref{def_Delta}),
	we can follow similar procedure as in \eqref{Delta_recursion} to simplify the RHS of \eqref{Q_bar_D}. Note that the only difference of \eqref{def_bDelta} and \eqref{def_tDelta} (also \eqref{def_Delta}) is in the coefficient of the terms $\mathbf{V}(\xb^{k-1},\xb)$, and $\mathbf{V}(\xb^k,\xb)$. Hence, by using condition \eqref{theta_eta_ds} in place of \eqref{theta_eta_d} (also \eqref{theta_eta}) , 
we obtain $\forall \zb\in X^m\times \bbr^{md}$
	\begin{align}\label{Q_bnd_s}
	\tsum_{k=1}^N\theta_kQ(\hat \xb^k,\yb^k;\zb)
	&\le \theta_1\eta_1\mathbf{V}(\xb^0,\xb)+\tfrac{\theta_1\tau_1}{2}\|\yb^0\|^2+ \la \hat\vb,\yb\ra\\\nn
	&~~~+\tsum_{k=1}^N\tsum_{t=1}^{T_k}\tsum_{i=1}^m\tfrac{2\theta_k}{T_k(T_k+1)}\left[t\langle\delta_i^{t-1,k}, x_i-u_i^{t-1}\rangle
	+ \tfrac{2t(M^2+\|\delta_i^{t-1,k}\|_*^2)}{(t+1)\mu/\mathcal{C}+(t-1)\eta_k}\right],
	\end{align}
where $\hat\vb$ is defined in \eqref{eqn:hatvb}.
	Our result in \eqref{Q_expectation2} immediately follows from the convexity of $Q$.

    Following the same procedure as we obtain \eqref{bnd_xNs_ns}, for any saddle point $\zb^*=(\xb^*,\yb^*)$ of \eqref{eqn:saddle}, we have 
    \begin{align}\label{eqn:xNbndstochp}
     \tfrac{\theta_N\eta_N }{2}\|\hat \xb^N-\xb^{N-1}\|^2
    &\le \tfrac{\theta_N\|\Lb\|^2}{2\tau_N}\|\xb^N-\xb^{N-1}\|^2 +\theta_1\eta_1\mathbf{V}(\xb^0, \xb^*)+\tfrac{\theta_1\tau_1}{2}\|\yb^*-\yb^0\|^2\\\nn
    &~~~+\tsum_{k=1}^N\tsum_{t=1}^{T_k}\tsum_{i=1}^m\tfrac{2\theta_k}{T_k(T_k+1)}\left[t\langle\delta_i^{t-1,k}, x_i^*-u_i^{t-1}\rangle
   + \tfrac{2t(M^2+\|\delta_i^{t-1,k}\|_*^2)}{(t+1)\mu/\mathcal{C}+(t-1)\eta_k}\right],\\\nn
   \tfrac{\theta_N\tau_N}{2}\|\yb^*-\yb^N\|^2
   &\le \tfrac{\theta_N\|\Lb\|^2}{2\eta_N }\|\yb^*-\yb^N\|^2+\theta_1\eta_1\mathbf{V}(\xb^0, \xb^*)+\tfrac{\theta_1\tau_1}{2}\|\yb^*-\yb^0\|^2\\\nn
   &~~~+\tsum_{k=1}^N\tsum_{t=1}^{T_k}\tsum_{i=1}^m\tfrac{2\theta_k}{T_k(T_k+1)}\left[t\langle\delta_i^{t-1,k}, x_i^*-u_i^{t-1}\rangle
  + \tfrac{2t(M^2+\|\delta_i^{t-1,k}\|_*^2)}{(t+1)\mu/\mathcal{C}+(t-1)\eta_k}\right],
    \end{align}
    from which the desired result in \eqref{eqn:xNbndstoch} follows.\\

We are ready to provide proofs for Lemma~\ref{lem:main_deter} and \ref{lem:main_deter_sc}, which demonstrates the convergence properties of the deterministic communication sliding method.
\vgap

\noindent{\bf Proof of Lemma~\ref{lem:main_deter}}\\
When $f_i, \ i=1,\dots,m$ are general nonsmooth convex functions, we have $\delta_i^t=0$, $\mu=0$ and $M>0$. Therefore, in view of \eqref{Q_bnd}, we have, $\forall \zb \in X^m\times \bbr^{md}$,
	\begin{align*}
	\tsum_{k=1}^N\theta_kQ(\hat \xb^k,\yb^k;\zb)
	&\le \tfrac{(T_1+1)(T_1+2)\theta_1\eta_1}{T_1(T_1+3)}\mathbf{V}(\xb^0,\xb)+\tfrac{\theta_1\tau_1}{2}\|\yb^0\|^2 +\la\hat\vb,\yb\ra
	+ \tsum_{k=1}^N\tfrac{4mM^2\theta_k}{(T_k+3)\eta_k},
	\end{align*}
where $\hat\vb$ is defined in \eqref{eqn:hatvb}.
	Our result in \eqref{eqn:Q0} immediately follows from the convexity of $Q$.
	Moreover, our result in \eqref{xNs_bnd_ns} follows from setting $\delta_i^{t-1,k}=0$ in \eqref{bnd_xNs_ns} and \eqref{bnd_yNs_ns}.

\vgap
\noindent{\bf Proof of Lemma~\ref{lem:main_deter_sc}}\\
When $f_i, \ i=1,\dots,m$ are strongly convex functions, we have $\delta_i^t=0$ and $\mu,\ M>0$. Therefore, in view of \eqref{Q_bnd_s}, we obtain, $\forall \zb\in X^m\times \bbr^{md}$,
    \begin{align*}
    \tsum_{k=1}^N\theta_kQ(\hat \xb^k,\yb^k;\zb)
    &\le \theta_1\eta_1\mathbf{V}(\xb^0,\xb)+\tfrac{\theta_1\tau_1}{2}\|\yb^0\|^2+\la\hat\vb,\yb\ra
    +\tsum_{k=1}^N\tsum_{t=1}^{T_k}\tfrac{2mM^2\theta_k}{T_k(T_k+1)}\tfrac{t} {(t+1)\mu/\mathcal{C}+(t-1)\eta_k},
    \end{align*}
    where $\hat\vb$ is defined in \eqref{eqn:hatvb}.
	Our result in \eqref{eqn:Q02} immediately follows from the convexity of $Q$.
Also, the result in \eqref{eqn:xNbnd} follows by setting $\delta_i^{t-1,k} = 0$ in \eqref{eqn:xNbndstochp}.\\

\noindent{\bf Proof of Theorem~\ref{thm:main_stoch_prob}}\\
Observe that by Assumption~\eqref{assume:unbiased}, \eqref{assume:sm_bounded} and~\eqref{assume:light_tail} on the SO and the definition of $u_i^{t,k}$, the sequence $\{\la \delta_i^{t-1,k}, x_i^*-u_i^{t-1,k}\ra\}_{1\le i\le m, 1\le t\le T_k, k\ge 1}$ is a martingale-difference sequence. Denoting
	\[
	\gamma_{k,t}:=\tfrac{\theta_k\lambda_t}{\tsum_{t=1}^{T_k}\lambda_t},
	\]
	and using the large-deviation theorem for martingale-difference sequence (e.g. Lemma 2 of \cite{lns11}) and the fact that
	\begin{align*}
	&\bbe[\exp\{\gamma_{k,t}^2\la \delta_i^{t-1,k}, x_i^*-u_i^{t-1,k}\ra^2/(2\gamma_{k,t}^2\bar V_i(x_i^*)\sigma^2) \}]\\
	&~~~\le \bbe[\exp\{\| \delta_i^{t-1,k}\|_*^2, \|x_i^*-u_i^{t-1,k}\|^2/(2\bar V_i(x_i^*)\sigma^2) \}]\\
	&~~~\le \bbe[\exp\{\| \delta_i^{t-1,k}\|_*^2/\sigma^2\}]\le \exp\{1\},
	\end{align*}
	we conclude that, $\forall \zeta>0$,
	\begin{align}\label{prob_bnd1}
	\Prob&\left\{\tsum_{k=1}^N\tsum_{t=1}^{T_k}\tsum_{i=1}^m\gamma_{k,t}\la \delta_i^{t-1,k}, u_i^{t-1,k}-x_i^*\ra>\zeta \sigma \sqrt{2\bar {\mathbf{V}}(\xb^*)\tsum_{k=1}^N\tsum_{t=1}^{T_k}\gamma_{k,t}^2}\right\}
	\le \exp\{-\zeta^2/3\}.
	\end{align}
	Now let
	\[
	S_{k,t}:=\tfrac{\theta_k\lambda_t}{\left(\tsum_{t=1}^{T_k}\lambda_t\right)\eta_k\beta_t},
	\]
	and $S:=\tsum_{k=1}^N\tsum_{t=1}^{T_k}\tsum_{i=1}^mS_{k,t}$. By the convexity of exponential function, we have
	\begin{align*}
	&\bbe[\exp\{\tfrac{1}{S}\tsum_{k=1}^N\tsum_{t=1}^{T_k}\tsum_{i=1}^mS_{k,t}\|\delta_i^{t-1,k}\|_*^2/\sigma^2\}]
	\le \bbe[\tfrac{1}{S}\tsum_{k=1}^N\tsum_{t=1}^{T_k}\tsum_{i=1}^mS_{k,t}\exp\{\|\delta_i^{t-1,k}\|_*^2/\sigma^2\}]\le \exp\{1\},
	\end{align*}
	where the last inequality follows from Assumption~\eqref{assume:light_tail}. Therefore, by Markov's inequality, for all $\zeta>0$,
	\begin{align}\label{prob_bnd2}
	&\Prob\left\{\tsum_{k=1}^N\tsum_{t=1}^{T_k}\tsum_{i=1}^mS_{k,t}\|\delta_i^{t-1,k}\|_*^2>(1+\zeta)\sigma^2\tsum_{k=1}^N\tsum_{t=1}^{T_k}\tsum_{i=1}^mS_{k,t}\right\}\\\nn
	&~~~=\Prob\left\{\exp \left\{\tfrac{1}{S}\tsum_{k=1}^N\tsum_{t=1}^{T_k}\tsum_{i=1}^mS_{k,t}\|\delta_i^{t-1,k}\|_*^2/\sigma^2\right\}\ge \exp\{1+\zeta\}\right\}\le \exp\{-\zeta\}.
	\end{align}
	Combing \eqref{prob_bnd1}, \eqref{prob_bnd2}, \eqref{Q_pert_bnd} and \eqref{eqn:gapp}, our result in \eqref{gap_prob} immediately follows.

\setcounter{equation}{0}

\section{Concluding Remarks}
\label{sec:con}
In this paper, we present a new class of decentralized primal-dual methods which can significantly
reduce the number of inter-node communications required to solve the
distributed optimization problem in \eqref{eqn:orgprob}.
More specifically, we show that by using these algorithms, the
total number of communication rounds can be significantly reduced
to ${\cal O}(1/\epsilon)$ when the objective functions $f_i$'s are convex and not necessarily smooth.
By properly designing the communication sliding algorithms, we demonstrate
that the ${\cal O}(1/\epsilon)$  number of communications can still be maintained for general convex objective functions (and it can be further reduced to ${\cal O}(1/\sqrt{\epsilon})$ for strongly convex objective functions)
even if the local subproblems are solved inexactly through iterative procedure (cf. \SGD~procedure) by the network agents.
In this case, the number of intra-node subgradient computations that we need will be bounded by ${\cal O}(1/\epsilon^2)$ (resp., ${\cal O}(1/\epsilon)$) when the objective functions $f_i$'s are convex (resp., strongly convex), which is comparable to that required in centralized nonsmooth optimization and not improvable in general.
We also establish similar complexity bounds for solving stochastic decentralized optimization counterpart by developing the stochastic communication sliding methods,
which can provide
communication-efficient ways to deal with streaming data and decentralized statistical inference.
All these decentralized communication sliding algorithms have the potential to significantly increase the performance of multiagent systems,
where the bottleneck exists in the communication.




\begin{thebibliography}{10}

\bibitem{arrow1958studies}
K.~Arrow, L.~Hurwicz, and H.~Uzawa.
\newblock {\em Studies in Linear and Non-linear Programming}.
\newblock Stanford Mathematical Studies in the Social Sciences. Stanford
  University Press, 1958.

\bibitem{Bertsekas-IP}
D.~P. Bertsekas.
\newblock Incremental proximal methods for large scale convex optimization.
\newblock {\em Mathematical Programming}, 129:163–--195, 2011.

\bibitem{Bertsekas-aggregated}
D.~P. Bertsekas.
\newblock Incremental aggregated proximal and augmented lagrangian algorithms.
\newblock Technical Report LIDS-P-3176, Laboratory for Information and Decision
  Systems, 2015.

\bibitem{Boyd-ADMM}
S.~Boyd, N.~Parikh, E.~Chu, B.~Peleato, and J.~Eckstein.
\newblock Distributed optimization and statistical learning via the alternating
  direction method of multipliers.
\newblock {\em Found. Trends Mach. Learn.}, 3(1):1--122, January 2011.

\bibitem{BREGMAN1967}
L.M. Bregman.
\newblock The relaxation method of finding the common point of convex sets and
  its application to the solution of problems in convex programming.
\newblock {\em USSR Computational Mathematics and Mathematical Physics},
  7(3):200 -- 217, 1967.

\bibitem{ChamPoc14-1}
A.~Chambolle and T.~Pock.
\newblock On the ergodic convergence rates of a first-order primal-dual
  algorithm.
\newblock Oct. 30, 2014.

\bibitem{Chambolle-PD}
Antonin Chambolle and Thomas Pock.
\newblock A first-order primal-dual algorithm for convex problems with
  applications to imaging.
\newblock {\em J. Math. Imaging Vis.}, 40(1):120--145, May 2011.

\bibitem{Chang-Stochastic}
T.~Chang and M.~Hong.
\newblock Stochastic proximal gradient consensus over random networks.
\newblock http://arxiv.org/abs/1511.08905, 2015.

\bibitem{InexactConsensus}
T.~Chang, M.~Hong, and X.~Wang.
\newblock Multi-agent distributed optimization via inexact consensus admm.
\newblock http://arxiv.org/abs/1402.6065, 2014.

\bibitem{perturbedPD}
T.-H. Chang, A.~Nedi\'c, and A.~Scaglione.
\newblock Distributed constrained optimization by consensus-based primal-dual
  perturbation method.
\newblock {\em Automatic Control, IEEE Transactions on}, 59(6):1524--1538, June
  2014.

\bibitem{Chen-fastProx}
A.~Chen and A.~Ozdaglar.
\newblock A fast distributed proximal gradient method.
\newblock In {\em Communication, Control, and Computing (Allerton), 2012 50th
  Annual Allerton Conference on}, pages 601--608, Oct 2012.

\bibitem{CheLanOu13-1}
Y.~Chen, G.~Lan, and Y.~Ouyang.
\newblock Optimal primal-dual methods for a class of saddle point problems.
\newblock 24(4):1779--1814, 2014.

\bibitem{Dang-Lan}
C.~Dang and G.~Lan.
\newblock Randomized first-order methods for saddle point optimization.
\newblock Technical Report 32611, Department of Industrial and Systems
  Engineering, University of Florida, Gainesville, FL, 2015.

\bibitem{Duchi-DDA}
J.~Duchi, A.~Agarwal, and M.~Wainwright.
\newblock Dual averaging for distributed optimization: Convergence analysis and
  network scaling.
\newblock {\em {IEEE} Trans. Automat. Contr.}, 57(3):592--606, 2012.

\bibitem{Durham-Bullo}
J.~W. Durham, A.~Franchi, and F.~Bullo.
\newblock Distributed pursuit-evasion without mapping or global localization
  via local frontiers.
\newblock {\em Autonomous Robots}, 32(1):81--95, 2012.

\bibitem{GhaLan10-1}
S.~Ghadimi and G.~Lan.
\newblock Optimal stochastic approximation algorithms for strongly convex
  stochastic composite optimization {I:} {A} generic algorithmic framework.
\newblock {\em {SIAM} Journal on Optimization}, 22(4):1469--1492, 2012.

\bibitem{ghadimi2013optimal}
S.~Ghadimi and G.~Lan.
\newblock Optimal stochastic approximation algorithms for strongly convex
  stochastic composite optimization, ii: shrinking procedures and optimal
  algorithms.
\newblock {\em SIAM Journal on Optimization}, 23(4):2061--2089, 2013.

\bibitem{Parrilo-aggregated}
M.~Gurbuzbalaban, A.~Ozdaglar, and P.~Parrilo.
\newblock On the convergence rate of incremental aggregated gradient
  algorithms.
\newblock http://arxiv.org/abs/1506.02081, 2015.

\bibitem{he2012on}
B.~He and X.~Yuan.
\newblock On the o(1/n) convergence rate of the douglas-rachford alternating
  direction method.
\newblock {\em SIAM Journal on Numerical Analysis}, 50(2):700--709, 2012.

\bibitem{HeJudNem15-1}
N.~He, A.~Juditsky, and A.~Nemirovski.
\newblock Mirror prox algorithm for multi-term composite minimization and
  semi-separable problems.
\newblock {\em Journal of Computational Optimization and Applications},
  103:127--152, 2015.

\bibitem{con01}
A.~Jadbabaie, Jie Lin, and A.S. Morse.
\newblock Coordination of groups of mobile autonomous agents using nearest
  neighbor rules.
\newblock {\em IEEE Transactions on Automatic Control}, 48(6):988 -- 1001, June
  2003.

\bibitem{Jakovetic-Fast}
D.~Jakovetic, J.~Xavier, and J.~Moura.
\newblock Fast distributed gradient methods.
\newblock {\em Automatic Control, IEEE Transactions on}, 59(5):1131--1145, May
  2014.

\bibitem{Lan10-3}
G.~Lan.
\newblock An optimal method for stochastic composite optimization.
\newblock {\em Mathematical Programming}, 133(1):365--397, 2012.

\bibitem{Lan-Sliding}
G.~Lan.
\newblock Gradient sliding for composite optimization.
\newblock {\em Mathematical Programming}, 159(1):201--235, 2016.

\bibitem{lns11}
G.~Lan, A.~Nemirovski, and A.~Shapiro.
\newblock Validation analysis of mirror descent stochastic approximation
  method.
\newblock {\em Math. Program.}, 134(2):425--458, 2012.

\bibitem{GLYZ}
G.~Lan and Y.~Zhou.
\newblock An optimal randomized incremental gradient method.
\newblock http://arxiv.org/abs/1507.02000, 2015.

\bibitem{Lobel2011}
I.~Lobel and A.~Ozdaglar.
\newblock Distributed subgradient methods for convex optimization over random
  networks.
\newblock {\em IEEE Transactions on Automatic Control}, 56(6):1291 --1306, June
  2011.

\bibitem{ADMM-linear}
A.~Makhdoumi and A.~Ozdaglar.
\newblock Convergence rate of distributed admm over networks.
\newblock http://arxiv.org/abs/1601.00194, 2016.

\bibitem{Ribeiro-DQM}
A.~Mokhtari, W.~Shi, Q.~Ling, and A.~Ribeiro.
\newblock Dqm: Decentralized quadratically approximated alternating direction
  method of multipliers.
\newblock http://arxiv.org/abs/1508.02073, 2015.

\bibitem{Ribeiro-second}
A.~Mokhtari, W.~Shi, Q.~Ling, and A.~Ribeiro.
\newblock A decentralized second-order method with exact linear convergence
  rate for consensus optimization.
\newblock http://arxiv.org/abs/1602.00596, 2016.

\bibitem{Monteiro01}
R.~D.~C. Monteiro and B.~F. Svaiter.
\newblock On the complexity of the hybrid proximal extragradient method for the
  iterates and the ergodic mean.
\newblock {\em SIAM Journal on Optimization}, 20(6):2755--2787, 2010.

\bibitem{Monteiro02}
R.~D.~C. Monteiro and B.~F. Svaiter.
\newblock Complexity of variants of tseng's modified f-b splitting and
  korpelevich's methods for hemivariational inequalities with applications to
  saddle-point and convex optimization problems.
\newblock {\em SIAM Journal on Optimization}, 21(4):1688--1720, 2011.

\bibitem{Monteiro03}
R.~D.~C. Monteiro and B.~F. Svaiter.
\newblock Iteration-complexity of block-decomposition algorithms and the
  alternating direction method of multipliers.
\newblock {\em SIAM Journal on Optimization}, 23(1):475--507, 2013.

\bibitem{MonSva10-1}
R.D.C. Monteiro and B.F. Svaiter.
\newblock On the complexity of the hybrid proximal projection method for the
  iterates and the ergodic mean.
\newblock 20:2755--2787, 2010.

\bibitem{Nedic11}
A.~Nedi\'c.
\newblock Asynchronous broadcast-based convex optimization over a network.
\newblock {\em IEEE Trans. Automat. Contr.}, 56(6):1337--1351, 2011.

\bibitem{AN2001}
A.~Nedi\'c, D.~P. Bertsekas, and V.~S. Borkar.
\newblock Distributed asynchronous incremental subgradient methods.
\newblock {\em Inherently Parallel Algorithms in Feasibility and Optimization
  and Their Applications}, pages 311--407, 2001.

\bibitem{ANAO}
A.~Nedi\'c and A.~Olshevsky.
\newblock Distributed optimization over time-varying directed graphs.
\newblock {\em IEEE Transactions on Automatic Control}, 60(3):601--615, March
  2015.

\bibitem{Wilbur-TV}
A.~Nedi\'c, A.~Olshevsky, and W.~Shi.
\newblock Achieving geometric convergence for distributed optimization over
  time-varying graphs.
\newblock http://arxiv.org/abs/1607.03218, 2016.

\bibitem{Nedic2009}
A.~Nedi\'c and A.~Ozdaglar.
\newblock Distributed subgradient methods for multi-agent optimization.
\newblock {\em IEEE Transactions on Automatic Control}, 54(1):48--61, 2009.

\bibitem{Nem05-1}
A.~S. Nemirovski.
\newblock Prox-method with rate of convergence $o(1/t)$ for variational
  inequalities with lipschitz continuous monotone operators and smooth
  convex-concave saddle point problems.
\newblock 15:229--251, 2005.

\bibitem{NJLS09-1}
A.~S. Nemirovski, A.~Juditsky, G.~Lan, and A.~Shapiro.
\newblock Robust stochastic approximation approach to stochastic programming.
\newblock 19:1574--1609, 2009.

\bibitem{nemyud:83}
A.~S. Nemirovski and D.~Yudin.
\newblock {\em Problem complexity and method efficiency in optimization}.
\newblock Wiley-Interscience Series in Discrete Mathematics. John Wiley, XV,
  1983.

\bibitem{Nest05-1}
Y.~E. Nesterov.
\newblock Smooth minimization of nonsmooth functions.
\newblock {\em Mathematical Programming}, 61(2):275--319, 2015.

\bibitem{GL-AADMM}
Y.~Ouyang, Y.~Chen, G.~Lan, and E.~Pasiliao Jr.
\newblock An accelerated linearized alternating direction method of
  multipliers.
\newblock {\em SIAM Journal on Imaging Sciences}, 8(1):644--681, 2015.

\bibitem{NaLi-Harness}
G.~Qu and N.~Li.
\newblock Harnessing smoothness to accelerate distributed optimization.
\newblock http://arxiv.org/abs/1605.07112, 2016.

\bibitem{Rabbat-SMD}
M.~Rabbat.
\newblock Multi-agent mirror descent for decentralized stochastic optimization.
\newblock In {\em 2015 IEEE 6th International Workshop on Computational
  Advances in Multi-Sensor Adaptive Processing (CAMSAP)}, pages 517--520, Dec
  2015.

\bibitem{rabbat}
M.~Rabbat and R.~D. Nowak.
\newblock Distributed optimization in sensor networks.
\newblock In {\em IPSN}, pages 20--27, 2004.

\bibitem{Ram:2009}
S.~S. Ram, A.~Nedi\'{c}, and V.~V. Veeravalli.
\newblock Incremental stochastic subgradient algorithms for convex
  optimization.
\newblock {\em SIAM J. on Optimization}, 20(2):691--717, June 2009.

\bibitem{Ram2010}
S.~S. Ram, A.~Nedi\'c, and V.~V. Veeravalli.
\newblock {Distributed Stochastic Subgradient Projection Algorithms for Convex
  Optimization}.
\newblock {\em Journal of Optimization Theory and Applications}, 147:516--545,
  2010.

\bibitem{ram_info}
S.~S. Ram, V.~V. Veeravalli, and A.~Nedi\'c.
\newblock Distributed non-autonomous power control through distributed convex
  optimization.
\newblock In {\em IEEE INFOCOM}, pages 3001--3005, 2009.

\bibitem{Wilbur-ADMM}
W.~Shi, Q.~Ling, G.~Wu, and W.~Yin.
\newblock On the linear convergence of the admm in decentralized consensus
  optimization.
\newblock {\em IEEE Transactions on Signal Processing}, 62(7):1750--1761, 2014.

\bibitem{WYin-Extra}
W.~Shi, Q.~Ling, G.~Wu, and W.~Yin.
\newblock Extra: An exact first-order algorithm for decentralized consensus
  optimization.
\newblock {\em SIAM Journal on Optimization}, 25(2):944–--966, 2015.

\bibitem{WYin-PGExtra}
W.~Shi, Q.~Ling, G.~Wu, and W.~Yin.
\newblock A proximal gradient algorithm for decentralized composite
  optimization.
\newblock {\em IEEE Transactions on Signal Processing}, 63(22):6013--–6023,
  November 2015.

\bibitem{DistStoch}
A.~Simonetto, L.~Kester, and G.~Leus.
\newblock Distributed time-varying stochastic optimization and utility-based
  communication.
\newblock http://arxiv.org/abs/1408.5294, 2014.

\bibitem{dual-decompos}
H.~Terelius, U.~Topcu, and R.~Murray.
\newblock Decentralized multi-agent optimization via dual decomposition.
\newblock {\em IFAC Proceedings Volumes}, 44(1):11245--11251, 2011.

\bibitem{tsianos2012consensus}
K.~Tsianos, S.~Lawlor, and M.~Rabbat.
\newblock Consensus-based distributed optimization: Practical issues and
  applications in large-scale machine learning.
\newblock In {\em Proceedings of the 50th Allerton Conference on Communication,
  Control, and Computing}, 2012.

\bibitem{tsianos-pushsum}
K.~Tsianos, S.~Lawlor, and M.~Rabbat.
\newblock Push-sum distributed dual-averaging for convex optimization.
\newblock In {\em Proceedings of the 51st IEEE Conference on Decision and
  Control}, pages 5453--5458, Maui, Hawaii, December 2012.

\bibitem{Rabbat-online}
K.~Tsianos and M.~Rabbat.
\newblock Consensus-based distributed online prediction and optimization.
\newblock In {\em 2013 IEEE Global Conference on Signal and Information
  Processing}, pages 807--810, Dec 2013.

\bibitem{Tsi1986}
J.~Tsitsiklis, D.~Bertsekas, and M.~Athans.
\newblock Distributed asynchronous deterministic and stochastic gradient
  optimization algorithms.
\newblock {\em IEEE Transactions on Automatic Control}, 31(9):803 -- 812, Sep.
  1986.

\bibitem{Tsiphd}
J.~N. Tsitsiklis.
\newblock {\em Problems in Decentralized Decision Making and Computation}.
\newblock PhD thesis, Massachusetts Inst. Technol., Cambridge, MA, 1984.

\bibitem{Wang-Bertsekas}
M.~Wang and D.~P. Bertsekas.
\newblock Incremental constraint projection-proximal methods for nonsmooth
  convex optimization.
\newblock Technical Report LIDS-P-2907, Laboratory for Information and Decision
  Systems, 2013.

\bibitem{Wei-admm}
E.~Wei and A.~Ozdaglar.
\newblock On the ${O}(1/k)$ convergence of asynchronous distributed alternating
  direction method of multipliers.
\newblock http://arxiv.org/pdf/1307.8254, 2013.

\bibitem{Khan-MD}
C.~Xi, Q.~Wu, and U.~A. Khan.
\newblock Distributed mirror descent over directed graphs.
\newblock http://arxiv.org/abs/1412.5526, 2014.

\bibitem{Yuchen14}
Y.~Zhang and L.~Xiao.
\newblock Stochastic primal-dual coordinate method for regularized empirical
  risk minimization.
\newblock In {\em Proceedings of the 32nd International Conference on Machine
  Learning}, pages 353--361, 2015.

\bibitem{Martinez-PD}
M.~Zhu and S.~Martinez.
\newblock On distributed convex optimization under inequality and equality
  constraints.
\newblock {\em Automatic Control, IEEE Transactions on}, 57(1):151--164, Jan
  2012.

\end{thebibliography}

\end{document}